\newcommand{\inner}[1]{\left\langle #1 \right\rangle}
\newcommand{\norm}[1]{\left\Vert #1\right\Vert}
\newcommand{\bb}[1]{\mathbb{#1}}
\newcommand{\conv}[0]{\mathrm{conv}\,}%convex hull
\newcommand{\ca}[1]{\mathcal{#1}}
\newcommand{\A}{\ca{A}}
\newcommand{\xk}{{x_{k} }}
\newcommand{\xkp}{{x_{k+1} }}
\newcommand{\etak}{{\eta_{k} }}
\newcommand{\zk}{{z_{k} }}
\newcommand{\zkp}{{z_{k+1} }}
\newcommand{\mk}{{m_{k} }}
\newcommand{\vk}{{v_{k} }}
\newcommand{\mkp}{{m_{k+1} }}
\newcommand{\vkp}{{v_{k+1} }}
\newcommand{\D}{\ca{D}}
\newcommand{\Rn}{\mathbb{R}^n}
\newcommand{\cmark}{\ding{51}}%
\newcommand{\xmark}{\ding{55}}%
\newtheorem{theo}{Theorem}
\newtheorem{lem}{Lemma}
\newtheorem{prop}{Proposition}
\newtheorem{coro}{Corollary}
\newtheorem{defin}{Definition}
\newtheorem{rmk}{Remark}
\newtheorem{assumpt}{Assumption}
\begin{document}
\title{Adam-family Methods for Nonsmooth Optimization with Convergence Guarantees}

\author{\name Nachuan Xiao \email xnc@lsec.cc.ac.cn \\
	\addr Institute of Operations Research and Analytics\\
	National University of Singapore\\
	3 Research Link, Singapore, 117602
	\AND
	\name Xiaoyin Hu\thanks{Corresponding author} \email hxy@amss.ac.cn \\
	\addr School of Computer and Computing Science\\
	Hangzhou City University\\
	Hangzhou, China, 310015
	\AND 
	\name Xin Liu \email liuxin@lsec.cc.ac.cn\\
	\addr State Key Laboratory of Scientific and Engineering Computing\\
	Academy of Mathematics and Systems Science, Chinese Academy of Sciences\\
	Beijing, China, 100190
	\AND
	\name Kim-Chuan Toh \email mattohkc@nus.edu.sg\\
	\addr Department of Mathematics and Institute of Operations Research and Analytics\\
	National University of Singapore\\
	10 Lower Kent Ridge Road, Singapore, 119076
}

\editor{Krishnakumar Balasubramanian}

\maketitle

\begin{abstract}%   <- trailing '%' for backward compatibility of .sty file
In this paper, we present a comprehensive study on the convergence properties of Adam-family methods for nonsmooth optimization, especially in the training of nonsmooth neural networks. We introduce a novel two-timescale framework that adopts a two-timescale updating scheme, and prove its convergence properties under mild assumptions. Our proposed framework encompasses various popular Adam-family methods, providing convergence guarantees for these methods in training nonsmooth neural networks. Furthermore, we develop stochastic subgradient methods that incorporate gradient clipping techniques for training nonsmooth neural networks with heavy-tailed noise. Through our framework, we show that our proposed methods converge even when the evaluation noises are only assumed to be integrable. Extensive numerical experiments demonstrate the high efficiency and robustness of our proposed methods. 
\end{abstract}

\begin{keywords}
  nonsmooth optimization, stochastic subgradient methods, Adam,  nonconvex optimization, gradient clipping
\end{keywords}

\section{Introduction}
In this paper, we consider the following unconstrained nonlinear optimization problem
\begin{equation}
	\label{Prob_Ori}
	\tag{UNP}
	\begin{aligned}
		\min_{x \in \Rn}\quad f(x), 
	\end{aligned}
\end{equation}
where $f$ is nonconvex, locally Lipschitz continuous and possibly nonsmooth over $\Rn$. 

The optimization problem in the form of \ref{Prob_Ori} has numerous important applications in machine learning and data science, especially in training deep neural networks. In these applications of \ref{Prob_Ori},  we usually only have access to the stochastic evaluations of the exact gradients of $f$.  The stochastic gradient descent (SGD) is one of the most popular methods for solving \ref{Prob_Ori}, and incorporating the momentum terms to SGD for acceleration is also very popular in practice. In SGD, the updating rule depends on the stepsizes (i.e., learning rates), where all of the coordinates of the variable $x$ are equipped with the same stepsize. Recently,  a variety of accelerated versions for SGD are proposed. In particular, the widely used Adam algorithm \citep{kingma2014adam} is developed based on the adaptive adjustment of the coordinate-wise stepsizes and the incorporation of momentum terms in each iteration.
These enhancements have led to its high efficiency in practice. Motivated by Adam, a number of efficient Adam-family methods are developed, such as  AdaBelief \citep{zhuang2020adabelief}, AMSGrad \citep{reddi2019convergence}, NAdam \citep{dozat2016incorporating}, Yogi \citep{zaheer2018adaptive}, etc.

Towards the convergence properties of these Adam-family methods, \citet{kingma2014adam} shows the convergence properties for Adam with constant stepsize in minimizing a Lipschitz continuously differentiable objective function $f$. Then a great number of existing works are conducted to establish the convergence properties of Adam-family methods, see \citep{de2018convergence,zaheer2018adaptive,zou2019sufficient,barakat2021convergence,guo2021novel,shi2021rmsprop,zhang2022adam,wang2022provable} for more information. Some of these existing works \citep{zou2019sufficient,guo2021novel,shi2021rmsprop,barakat2021convergence,wang2022provable,zhang2022adam} adopt diminishing stepsizes to ensure the almost surely convergence to stationary points of $f$ for Adam, while some other existing works  \citep{de2018convergence,zaheer2018adaptive} fix the stepsize as constant and show that the sequence converges to a neighborhood of the stationary points of $f$. A comparison of the results of these existing works is presented in Table \ref{Table_intro_1}.

Despite extensive studies on Adam-family methods, most existing works focus on the cases where $f$ is differentiable over $\Rn$, as depicted in Table \ref{Table_intro_1}. However, nonsmooth activation functions, including ReLU and leaky ReLU, are very popular choices in building neural networks in practice \citep{ming2018food,fu2020drts,fu2021end,wang2023chromosome}, and most of the existing works (e.g., the works listed in Table \ref{Table_intro_1})
 test their analyzed Adam-family methods on the neural networks built by nonsmooth activation functions. As highlighted in \citep{bolte2021conservative,bianchi2022convergence}, when we build a neural network by nonsmooth blocks, the corresponding loss function is typically nonsmooth and not Clarke regular. Consequently, although numerous existing works establish the convergence properties for Adam and its variants, their results are not applicable to the analysis of these Adam-family methods in training nonsmooth neural networks. This naturally leads us to the following
question:
\begin{quote}
	Do Adam-family methods have any convergence guarantees in minimizing nonsmooth functions under practical settings, especially in training nonsmooth neural networks?
\end{quote}

	\begin{table}
	\tiny
	\centering
	\begin{tabular}{c|ccccc}
		\hline
		& Beyond differentiability & Beyond global Lipschitz & Nesterov mo. & A.s. convergence & Stepsize  \\
		\hline
		Our work                      & \cmark             & \cmark                & \cmark                 & \cmark           & $o(1/\log(k))$             \\
		\citet{dozat2016incorporating} & \xmark             & \xmark                & \cmark                 & \xmark           & No convergence result      \\
		\citet{de2018convergence}      & \xmark             & \xmark                & \xmark                 & \xmark           & Constant                   \\
		\citet{zou2019sufficient}      & \xmark             & \xmark                & \xmark                 & \cmark           & $O(k^{-s})$, $s \in (0,1]$ \\
		\citet{zaheer2018adaptive}     & \xmark             & \xmark                & \xmark                 & \xmark           & Constant                   \\
		\citet{guo2021novel}           & \xmark             & \xmark                & \xmark                 & \cmark           & Square-summable               \\
		\citet{shi2021rmsprop}         & \xmark             & \xmark                & \xmark                 & \cmark           & Square-summable               \\
		\citet{barakat2021convergence} & \xmark             & \cmark               & \xmark                 & \cmark           & Square-summable               \\
		\citet{wang2022provable}       & \xmark             & \cmark                & \xmark                 & \cmark           & Square-summable               \\
		\citet{zhang2022adam}          & \xmark             & \xmark                & \xmark                 & \cmark           & Square-summable           \\
		\citet{chen2022towards}        & \xmark             & \xmark                & \xmark                 & \cmark           & $O(k^{-s})$, $s \in (0,1]$   \\\hline  
	\end{tabular}
	\caption{\small Comparison of existing results on the convergence of Adam.  Here ``mo.'' is the abbreviation for ``momentum''. The term ``A.s. convergence'' refers to whether the sequence converges to stationary points of $f$ rather than to a neighborhood of these stationary points almost surely.   
	}
	\label{Table_intro_1}
\end{table}

\subsection{Challenges from Training Nonsmooth Neural Networks}
In training nonsmooth neural networks, one of the major challenges lies in how to differentiate their loss functions. These functions are typically formulated as compositions of elementary blocks that may not be smooth.
To address this issue, automatic differentiation (AD) algorithms have been widely adopted in various well-known machine learning packages, such as PyTorch, TensorFlow, JAX, MindSpore, and PaddlePaddle. Based on the chain rule, the AD algorithms can efficiently compute the gradients for those functions expressed through the composition of elementary differentiable blocks. However, as the chain rule fails for Clarke subdifferential, when we differentiate a neural network built from nonsmooth blocks by those AD algorithms, the results may not be contained in the Clarke subdifferential of its loss function.  As pointed out in \citet{bolte2021conservative}, most of the existing works ignore this issue. They use AD algorithms in training nonsmooth neural networks, but assume differentiability or weak convexity for the objective functions in their theoretical analysis to bypass these theoretical issues arising from the application of AD algorithms. Based on the chain rule for directional derivatives, some existing works \citep{barton2018computationally} propose specifically designed forward mode AD algorithms for evaluating the elements in lexicographic subdifferential \citep{nesterov2005lexicographic},  which is contained in the Clarke subdifferential. However, as described in \citet{bolte2021nonsmooth}, these approaches have expensive computational costs and require significant modifications to the algorithms in existing machine learning packages, and hence are less applicable to practical scenarios.

To understand how AD algorithms differentiate the loss functions of nonsmooth neural networks, \citet{bolte2021conservative} introduces the concept of the {\it conservative field} as a generalization of Clarke subdifferential for its corresponding {\it potential functions}. The class of potential functions includes semi-algebraic functions, semi-analytic functions, and functions whose graphs are definable in some $o$-minimal structures, hence covering the objective functions in a wide range of real-world applications \citep{davis2020stochastic,bolte2021conservative}. The conservative field preserves the validity of the chain rule for nonsmooth functions, explaining the results generated by AD algorithms from various popular numerical libraries such as PyTorch, TensorFlow, JAX, etc. Based on the concept of conservative field, we can characterize the stationarity and design algorithms for the unconstrained nonsmooth optimization, especially when the objective function is differentiated by AD algorithms.  Interested readers can refer \citet{bolte2021conservative} for more detailed properties of the conservative field.

The theoretical properties of the conservative field enable us to investigate the convergence properties of stochastic subgradient algorithms, especially when applied to train nonsmooth neural networks with AD algorithms. Some existing frameworks \citep{benaim2005stochastic,davis2020stochastic,bolte2021conservative} establish the convergence properties for stochastic subgradient methods by 
analyzing the limiting behaviour of their corresponding differential inclusions, and prove that these methods converge to stationary points of $f$ in the sense of its corresponding conservative field $\D_f$. Based on these frameworks, some recent works  \citep{davis2020stochastic,bolte2021conservative,hu2023improved} prove the convergence properties for SGD and proximal SGD. Moreover, \citet{castera2021inertial} proposes the inertial 
Newton algorithm (INNA), which can be regarded as a variant of SGD with heavy-ball momentum. Additionally, \citet{ruszczynski2020convergence,le2023nonsmooth} show the convergence property of SGD with heavy-ball momentum for nonsmooth nonconvex functions from the Norkin class. For Adam and its variants, some existing works \citep{da2020general,barakat2021convergence,gadat2022asymptotic} established the convergence properties of Adam for Lipschitz smooth $f$ by analyzing the limiting behaviour of its corresponding differential equation. However, their approaches rely on some time-dependent differential equations, 
which are challenging to be extended to nonsmooth cases based on the frameworks in \citep{benaim2005stochastic,davis2020stochastic,bolte2021conservative}. To the best of our knowledge, no existing work addresses the convergence properties of Adam-family methods for nonsmooth optimization.

Furthermore, \citet{bolte2021conservative} demonstrate that the Clarke subdifferential is a subset of the conservative field for any potential functions. For nonsmooth neural networks, the conservative fields associated with AD algorithms may introduce infinitely many spurious stationary points \citep{bolte2021nonsmooth,bianchi2022convergence}. Therefore, when we design stochastic subgradient methods based on the conservative field,  the results in some existing frameworks \citep{benaim2005stochastic,davis2020stochastic,bolte2021conservative} can only ensure the convergence to stationary points in the sense of conservative field. As demonstrated in \citep{bolte2021nonsmooth,bianchi2022convergence}, these results fail to guarantee the convergence to meaningful stationary points of $f$.    To this end, \citet{bianchi2022convergence} establishes that under mild assumptions with randomized initial points and stepsizes, SGD can find Clarke stationary points for nonsmooth neural networks almost surely. However, their analysis is limited to SGD without any momentum term, and how to extend their results to Adam-family methods remains an open question.

\subsection{Challenges from Heavy-tailed Evaluation Noises}

Another challenge for solving \ref{Prob_Ori} lies in the noises when evaluating the stochastic subgradient of the objective function. The evaluation noises in a great number of existing works are assumed to have finite second-order moment or even uniformly bounded, for the sake of convenience when analyzing their theoretical properties. However, in various machine learning tasks, such as classification models \citep{mahoney2019traditional,simsekli2019tail,simsekli2020fractional,camuto2021asymmetric,wan2023implicit} and language models \citep{zhang2019gradient,zhang2020adaptive}, some recent works \citep{simsekli2019tail,zhang2020adaptive} illustrate that the evaluation noises of the stochastic subgradients could be heavy-tailed (i.e., only have bounded $s$-order moment for some $s \in [1,2)$ \citep{zhang2019gradient}).   
As illustrated in \citet{zhang2019gradient}, the heavy-tailed evaluation noises have a higher probability of producing extreme values or outliers when compared to normal distributions, and hence may undermine the performance of SGD in these tasks. Even in finite-sum settings, the frequently occurred extreme values in the evaluation of stochastic subgradients can result in extremely large variance \citep{simsekli2019tail}. These results explain the empirical observations in training neural networks, including the long-standing failure cases of SGD methods in training recurrent neural networks \citep{pascanu2012understanding}, and the superior performance of adaptive methods over SGD methods in training language models \citep{zhang2020adaptive}.

To address the challenges in solving \ref{Prob_Ori} with heavy-tailed evaluation noises, the gradient clipping technique has been developed. Gradient clipping normalizes the stochastic gradient, thus preventing extreme values in evaluating the stochastic gradients that can cause instability or divergence in the optimization algorithms. With the gradient clipping technique, some recent works \citep{zhang2019gradient} show that SGD converges when the evaluation noises are bounded in $L^s$ (i.e., the noises $\{\xi_k\}$ satisfy $\sup_{k\geq 0}\mathbb{E}[||\xi_k||^s]<+\infty$) for some $s \in (1,2)$. Table \ref{Table_intro_2} exhibits the related works on the convergence properties of stochastic subgradient methods with gradient clipping  \citep{zhang2019gradient,gorbunov2020stochastic,zhang2020adaptive,mai2021stability,qian2021understanding,elesedy2023u,reisizadeh2023variance}.  As illustrated in Table \ref{Table_intro_2}, all of these existing works rely on the weak convexity of the objective function $f$, hence they are not applicable for training nonsmooth neural networks.

Moreover, most of these existing works focus on the standard SGD method without the momentum term. Although \citet{zhang2020adaptive,pan2023toward} introduce stochastic Adam-family methods with gradient clipping, they did not provide any convergence guarantee for their proposed method.  Therefore, a significant gap exists between the existing 
theoretical analysis \citep{zhang2019gradient,gorbunov2020stochastic,zhang2020adaptive,mai2021stability,qian2021understanding,elesedy2023u,reisizadeh2023variance} and practical implementations for stochastic subgradient methods with heavy-tailed noise, and how to fill that gap is challenging and remains unexplored. 

\begin{table}[tb]
	\centering
	\tiny
	\begin{tabular}{c|cccccc}
		\hline
		& Beyond differentiability & Assumption on noises  & Adaptive stepsize &  Heavy-ball mo. & Nesterov mo. & Convergence \\ \hline
		Our work                & \cmark   &  $L^{1}$       & \cmark  & \cmark &    \cmark &  \cmark   \\
		\citet{zhang2019gradient}& \xmark   &  $L^{\infty}$  & \xmark  & \xmark &    \xmark &  \cmark   \\
		\citet{gorbunov2020stochastic}& \xmark   &  $L^{2}$  &\xmark  & \xmark &    \xmark &  \cmark   \\
		\citet{zhang2020adaptive}& \xmark   &   $L^{s}$ for $s \in(1,2]$  &\xmark  & \xmark &    \xmark &  \cmark   \\
		\citet{zhang2020adaptive}& \xmark   &  No convergence result  &\cmark  & \xmark &    \xmark &  \xmark   \\
		\citet{mai2021stability}&  $\text{\cmark}^{(a)}$   &  $L^{2}$       & \xmark  & \xmark &    \xmark &  \cmark   \\   
		\citet{qian2021understanding}& \xmark  & $L^{\infty}$ & \xmark  & \xmark &    \xmark &  \cmark   \\
		\citet{elesedy2023u}   & \xmark  & $L^{2}$ & \xmark  & \xmark &    \xmark &  \cmark   \\
		\citet{reisizadeh2023variance}& \xmark  & $L^{2}$ & \xmark  & \xmark &    \xmark &  \cmark   \\ \hline
	\end{tabular}
	\caption{\small Comparison of existing convergence results of stochastic (sub)gradient methods with gradient clipping techniques. Here ``mo.'' is the abbreviation for ``momentum''. (a): The proof techniques in \citet{mai2021stability} relies on weak convexity on $f$ and cannot be applied to non-regular cases. }
	\label{Table_intro_2}
\end{table}

\subsection{Contributions}
In this paper, we aim to establish the convergence properties of Adam-family methods for nonsmooth optimization, especially in the context of training nonsmooth neural networks. To this end, we employ the concept of the conservative field to characterize how the objective function $f$ is differentiated,  and consider the following set-valued mapping  $\ca{G}: \Rn \times \Rn \times \Rn \rightrightarrows \Rn \times \Rn \times \Rn$,
\begin{equation}
	\label{Eq_mapping_G}
	\ca{G}(x, m, v) := 
	 \left\{  \left[\begin{matrix}
		(|v| + \varepsilon )^{-\gamma} \odot \left(   m + \alpha d \right)\\
		\tau_1 m - \tau_1 d\\
		\tau_2 v - \tau_2  u\\
	\end{matrix}\right]  : d \in \D_f(x), ~u \in \ca{U}(x,m, v)
        \right\}.
\end{equation}
Here $\D_f$ refers to the conservative field that characterizes how we differentiate the objective function $f$, and $\alpha, \gamma, \varepsilon, \tau_1, \tau_2$ are hyper-parameters. Moreover, $\odot$ and $(\cdot)^{\gamma}$ refer to the element-wise multiplication and power,  respectively. Furthermore,  $\ca{U}: \Rn\times \Rn \times \Rn \rightrightarrows \Rn$ is a set-valued mapping that determines how the estimator $\vk$ is updated.  
Then we propose the following generalized framework for Adam-family methods (\ref{Eq_framework}), 
\begin{equation}
	\label{Eq_framework}
	\tag{AFM}
	(\xkp, m_{k+1}, v_{k+1}) = (x_k, m_k, v_k) - \eta_k (d_{x, k}, d_{m, k}, d_{v, k}) -  \theta_k (\xi_{x, k}, \xi_{m, k}, \xi_{v, k}).
\end{equation}
In \eqref{Eq_framework}, $(d_{x, k}, d_{m, k}, d_{v, k})$ denotes the updating direction, which is an approximated evaluation for $\ca{G}(\xk, \mk, \vk)$, while $(\xi_{x, k}, \xi_{m, k}, \xi_{v, k})$ refers to the evaluation noise. Moreover,  $\{\eta_k\}$ and $\{\theta_k\}$ are the two-timescale stepsizes for updating directions and evaluation noises respectively,  in the sense that they may satisfy $\eta_k /\theta_k \to 0$ as $k\to\infty.$

We prove that under mild conditions, any cluster point of the sequence ${\xk}$ generated by our proposed framework \eqref{Eq_framework} with stepsizes in the order of $o(1/\log(k))$ is a $\D_f$-stationary point of $f$. 
Furthermore, we establish that under mild conditions with randomly chosen initial points and stepsizes, almost surely, any cluster point of the sequence ${\xk}$ is a Clarke stationary point of $f$, independent of the chosen conservative field $\D_f$.

Based on our proposed framework \eqref{Eq_framework}, we demonstrate that our proposed framework can be employed to analyze the convergence properties for a class of Adam-family methods with diminishing stepsize, including  Adam, AdaBelief, AMSGrad, NAdam, and Yogi. We prove that these Adam-family methods converge to stationary points of $f$ in both senses of conservative field and Clarke subdifferential under mild conditions,  thus providing theoretical guarantees for their performance in training nonsmooth neural networks with AD algorithms. 

Another application of our proposed framework \eqref{Eq_framework} lies in investigating the convergence properties of stochastic subgradient methods that incorporate the gradient clipping technique. We prove that under heavy-tailed evaluation noises that are only assumed to be integrable,  our proposed gradient clipping methods conform to the proposed framework \eqref{Eq_framework}. As a result, the convergence properties of these gradient clipping methods directly follow those established for our proposed framework \eqref{Eq_framework}, under mild conditions. 

Furthermore, we perform extensive numerical experiments to evaluate the performance of our proposed Adam-family methods. By comparing with the implementations of Adam-family methods in PyTorch that utilize fixed stepsize in updating their momentum terms and variance estimators, we demonstrate that our proposed Adam-family methods achieve similar accuracy and training loss. Moreover, when the evaluation noises are heavy-tailed, the numerical examples demonstrate that our proposed Adam-family methods outperform existing approaches in terms of training efficiency and robustness.

\subsection{Organization}
The rest of this paper is organized as follows.  In  Section \ref{Section_2}, we define the notations used throughout the paper and present some essential concepts related to probability theory, nonsmooth analysis and differential inclusion. In Section \ref{Section_3}, we focus on the analysis of the convergence properties of our proposed framework \eqref{Eq_framework}, in both senses of the conservative field and Clarke subdifferential. Section \ref{Section_4} illustrates the application of our framework  \eqref{Eq_framework} in establishing the convergence properties for a class of Adam-family methods, including Adam, AdaBelief, AMSGrad, NAdam, and Yogi, under practical settings with mild conditions. Section \ref{Section_5} demonstrates another application of our framework \eqref{Eq_framework} by illustrating the convergence properties of stochastic subgradient methods with gradient clipping technique under heavy-tailed evaluation noises.  In Section 6, we present the results of our numerical experiments that investigate the performance of our proposed Adam-family methods for training nonsmooth neural networks. Finally, we conclude the paper in the last section.

\section{Preliminary}
\label{Section_2}
\subsection{Basic Notations}
For any vectors $x$ and $y$ in $\Rn$ and $\delta \in \bb{R}$, we denote $x\odot y$, $x^\delta$, $x/y$, $|x|$, $x+\delta$ as the vectors whose $i$-th entries are respectively given by $x_iy_i$, $x_i^{\delta}$, $x_i/y_i$, $|x_i|$ and $x_i + \delta$. Moreover, for any sets $\ca{X}, \ca{Y} \subset \Rn$, we denote $\ca{X}\odot \ca{Y}:= \{x\odot y: x \in \ca{X}, y\in \ca{Y} \}$, $(\ca{X})^p := \{x^p: x\in \ca{X}\}$ and  $|\ca{X}|:= \{|x|: x \in \ca{X}\}$. In addition, for any $z \in \Rn$, we denote $z + \ca{X} := \{z\} + \ca{X}$ and $z \odot \ca{X} := \{z\} \odot\ca{X}$. 

We define the set-valued mappings $\mathrm{sign}:\Rn \rightrightarrows \Rn$  and  $\widetilde{\mathrm{sign}}:\Rn \rightrightarrows \Rn$ as follows: For any $x \in \Rn$,
%$\mathrm{sign}(x)$ as the Clarke subdifferential of $t\mapsto \norm{t}_1$ at $x$, and  
\begin{equation*}
	\small
	\left(\mathrm{sign}(x)\right)_i = \begin{cases}
		\{-1\} & x_i<0;\\
		[-1, 1] & x_i = 0; \\
		\{1\} & x_i > 0.
	\end{cases}
	,\quad \text{and} \quad 
	\left(\widetilde{\mathrm{sign}}(x)\right)_i = \begin{cases}
		\{-1\} & x_i<0;\\
		\{0\} & x_i = 0; \\
		\{1\} & x_i > 0.
	\end{cases}
\end{equation*}
Then it is easy to verify that $\widetilde{\mathrm{sign}}(x) \odot \mathrm{sign}(x) = (\widetilde{\mathrm{sign}}(x))^2$ holds for any $x \in \Rn$. 

In addition, we denote $\Rn_+:=\{ x\in \Rn: x_i\geq 0 \text{ for any } 1\leq i\leq n \}$. Moreover, $\mu^d$ refers to the Lebesgue measure on $\bb{R}^d$, and when the dimension $d$ is clear from the context, we write the Lebesgue measure as $\mu$ for brevity. Furthermore, we say a measurable set $A$ is zero-measure if $\mu(A) = 0$, and $A$ is full-measure if $\mu(A^c) = 0$.

\subsection{Probability Theory}

In this subsection, we present some essential concepts from probability theory, which are necessary for the proofs in this paper. 
\begin{defin}
	Let $(\Omega, \ca{F}, \mathbb{P})$ be a probability space. We say $\{\ca{F}_k\}_{k \in \bb{N}}$ is a filtration  if  $\{\ca{F}_k\}$ is a collection of $\sigma$-algebras that
	 satisfies $
	\ca{F}_0 \subseteq \ca{F}_1 \subseteq \cdots \subseteq \ca{F}_{\infty} \subseteq \ca{F}$. 
\end{defin}

\begin{defin}
	We say that a stochastic series $\{\xi_k\}$ is a martingale if the following conditions hold.
	\begin{itemize}
		\item The sequence of random vectors $\{\xi_k\}$ is adapted to the filtration $\{ \ca{F}_{k} \}$. That is, for any $k \geq 0$, $\xi_k$ is measurable with respect to the $\sigma$-algebra $\ca{F}_{k}$.
		\item The equation $\bb{E}[\xi_{k+1}| \ca{F}_k] = \xi_{k}$ holds almost surely for every $k\geq 0$. 
	\end{itemize}
\end{defin}

\begin{defin}
	We say that a stochastic series $\{\xi_k\}$ is a supermartingale if $\{\xi_k\}$ is adapted to the filtration $\{ \ca{F}_{k} \}$ and $\bb{E}[\xi_{k+1}| \ca{F}_k] \leq \xi_{k}$ holds almost surely for every $k\geq 0$. 
\end{defin}

\begin{defin}
	We say that a stochastic series $\{\xi_k\}$ is a martingale difference sequence if the following conditions hold.
	\begin{itemize}
		\item The sequence of random vectors $\{\xi_k\}$ is adapted to the filtration $\{ \ca{F}_{k} \}$. 
		\item For each $k \geq 1$, almost surely, it holds that $\bb{E}[|\xi_k|] < +\infty$ and $\bb{E}\left[ \xi_k | \ca{F}_{k-1} \right] = 0$. 
	\end{itemize}
\end{defin}

The following proposition plays an important role in establishing the convergence properties for our proposed framework \eqref{Eq_framework}. In this proposition, we improve the results in \citep[Proposition 4.4]{benaim2006dynamics} and  demonstrate that with appropriately chosen $\{\eta_k\}$ and $\{\theta_k\}$, the uniform boundedness of the martingale difference sequence $\{\xi_k\}$ leads to the validity of the regularity conditions in \citep[Section 1.5]{benaim2005stochastic}. 

\begin{prop}
	\label{Prop_UB_martingale_difference_sequence}
	Suppose $\{\eta_k\}$ and $\{\theta_k\}$ are two diminishing positive sequences of real numbers that satisfy 
	\begin{equation*}
		\lim_{k \to +\infty} \frac{\theta_k^2}{\eta_k} \log(k) = 0.
	\end{equation*}
	Let $\lambda_0 := 0$, $\lambda_i := \sum_{k = 0}^{i-1} \eta_k$, and $\Lambda(t) := \sup  \{k \geq 0: t\geq \lambda_k\} $. Then for any $T > 0$, and any uniformly bounded martingale difference sequence $\{\xi_k\}$, almost surely, it holds that
	\begin{equation*}
		\lim_{s \to +\infty} \sup_{s\leq i \leq \Lambda(\lambda_s + T)}\norm{ \sum_{k = s}^{i} \theta_k \xi_k} =0 . 
	\end{equation*}
\end{prop}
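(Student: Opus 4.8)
The plan is to reduce the assertion (which is an almost-sure statement, since the $\xi_k$ are random) to a martingale concentration bound followed by a Borel--Cantelli argument. Let $M < +\infty$ be the uniform bound, so $\|\xi_k\| \leq M$ almost surely for every $k$, and for each fixed $s$ put $S_i^{(s)} := \sum_{k=s}^{i}\theta_k \xi_k$ for $i \geq s$, with $S_{s-1}^{(s)} := 0$. Since $\{\theta_k\}$ is deterministic and $\{\xi_k\}$ is a martingale difference sequence, $\{S_i^{(s)}\}_{i \geq s}$ is a martingale with respect to $\{\ca{F}_i\}$ whose increments satisfy $\|\theta_k \xi_k\| \leq M\theta_k$. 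Writing $N_s := \Lambda(\lambda_s + T)$, the goal becomes $\sup_{s \leq i \leq N_s}\|S_i^{(s)}\| \to 0$ almost surely. I would first dispatch the degenerate case $\sum_k \eta_k < +\infty$: then $\theta_k^2 \leq \eta_k$ for all large $k$ (because $\theta_k^2/\eta_k \to 0$), hence $\sum_k \theta_k^2 < +\infty$; so $\{\sum_{k=0}^{i}\theta_k\xi_k\}_i$ is an $L^2$-bounded martingale, converges almost surely, and therefore $\sup_{i \geq s}\|S_i^{(s)}\| \to 0$ a.s.\ by the Cauchy criterion, which is even stronger than what is claimed. From now on assume $\sum_k \eta_k = +\infty$; then $\lambda_k \to +\infty$, so $N_s < +\infty$ and $s \leq N_s$ for every $s$, and the supremum ranges over a nonempty finite set.

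The deterministic core is a uniform bound on the quadratic variation over the window $\{s, \dots, N_s\}$. By definition of $\Lambda$ one has $\lambda_{N_s} \leq \lambda_s + T$, i.e.\ $\sum_{k=s}^{N_s - 1}\eta_k \leq T$, so $\sum_{k=s}^{N_s}\eta_k \leq T + \bar{\eta}$ with $\bar{\eta} := \sup_k \eta_k < +\infty$ (finite since $\eta_k \to 0$). Setting $\epsilon_k := \theta_k^2 \log(k)/\eta_k$ for $k \geq 2$, which tends to $0$ by hypothesis, and factoring $\theta_k^2 = \epsilon_k \eta_k / \log(k)$, I obtain for all large $s$
\[
\sigma_s^2 \ :=\ \sum_{k=s}^{N_s}\theta_k^2 \ \leq\ \frac{\sup_{k \geq s}\epsilon_k}{\log s}\sum_{k=s}^{N_s}\eta_k \ \leq\ \frac{(T + \bar{\eta})\sup_{k \geq s}\epsilon_k}{\log s},
\]
which tends to $0$ because $\sup_{k \geq s}\epsilon_k \to 0$.

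Next I apply a maximal form of the Azuma--Hoeffding inequality --- obtained by applying Doob's submartingale inequality to the exponential submartingales $e^{\pm \lambda (S_i^{(s)})_j}$, via Hoeffding's lemma on the bounded increments, and then a union bound over the $n$ coordinates: for every $\delta > 0$,
\[
\mathbb{P}\!\left( \max_{s \leq i \leq N_s}\|S_i^{(s)}\| \geq \delta \right) \ \leq\ 2n \exp\!\left( -\frac{\delta^2}{2 n M^2 \sigma_s^2} \right) \ \leq\ 2n \exp\!\left( -\frac{\delta^2 \log s}{2 n M^2 (T + \bar{\eta})\sup_{k \geq s}\epsilon_k} \right).
\]
Since $\sup_{k \geq s}\epsilon_k \to 0$, the coefficient of $\log s$ in the exponent diverges to $+\infty$, so for each fixed $\delta$ there is $s_0(\delta)$ with $\mathbb{P}(\max_{s \leq i \leq N_s}\|S_i^{(s)}\| \geq \delta) \leq 2n\, s^{-2}$ for all $s \geq s_0(\delta)$; hence $\sum_s \mathbb{P}(\max_{s \leq i \leq N_s}\|S_i^{(s)}\| \geq \delta) < +\infty$. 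By Borel--Cantelli, almost surely $\max_{s \leq i \leq N_s}\|S_i^{(s)}\| < \delta$ for all sufficiently large $s$. Applying this with $\delta = 1/m$ and intersecting over $m \in \mathbb{N}$ (a countable intersection of full-probability events) yields $\limsup_{s \to \infty}\sup_{s \leq i \leq N_s}\|S_i^{(s)}\| \leq 1/m$ almost surely for every $m$, so this limsup equals $0$ almost surely, which is the assertion.

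The main obstacle --- and the place where the hypothesis is used at full strength --- is the passage from convergence in probability to almost-sure convergence. A second-moment estimate together with Doob's $L^2$-maximal inequality only gives $\mathbb{E}[\sup_{s \leq i \leq N_s}\|S_i^{(s)}\|^2] = O(\sigma_s^2) \to 0$, i.e.\ $L^2$-convergence over each window, not the almost-sure conclusion; and if one only assumed $\theta_k^2 \log(k)/\eta_k$ to be bounded (as in \cite[Proposition 4.4]{benaim2006dynamics}), the resulting Borel--Cantelli series would behave like $\sum_s s^{-c\delta^2}$ and converge only for large $\delta$. It is precisely the sub-Gaussian tail from Azuma's inequality combined with $\epsilon_k \to 0$ that makes the series summable for \emph{every} $\delta$ simultaneously, and thereby forces the limsup to vanish. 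The remaining work --- bounding the window length $\sum_{k=s}^{N_s}\eta_k$ and the quadratic variation $\sigma_s^2$ uniformly in $s$, and stitching the $\delta = 1/m$ statements together --- is routine bookkeeping.
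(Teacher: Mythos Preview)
Your proof is correct and follows essentially the same route as the paper: both arguments exploit that uniformly bounded martingale differences are sub-Gaussian, apply a maximal Azuma--Hoeffding bound to control $\sup_{s\le i\le N_s}\norm{S_i^{(s)}}$, and then invoke Borel--Cantelli, with the hypothesis $\theta_k^2\log(k)/\eta_k\to 0$ making the tail probabilities summable for every $\delta>0$. The only organisational differences are that you sum Borel--Cantelli directly over $s$ (obtaining a bound of order $s^{-2}$), whereas the paper first passes to blocks $[\Lambda(jT),\Lambda((j+1)T)]$ and sums over $j$, and that you treat the degenerate case $\sum_k\eta_k<+\infty$ explicitly; neither difference is substantive.
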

\begin{proof}
	Since the martingale difference sequence $\{\xi_k\}$ is  uniformly bounded, $\xi_k$ is sub-Gaussian for any $k\geq 0$. Then there exists a constant $M>0$ such that for any $w \in \Rn$, it holds for any $k\geq 0$ that 
	\begin{equation*}
		\bb{E}\left[ \exp\left( \inner{w, \xi_{k+1}} \right) | \ca{F}_k \right] \leq \exp\left( \frac{M}{2}\norm{w}^2 \right),
	\end{equation*}
        holds almost surely. Therefore, for any $w \in \Rn$ and any $C > 0$, let 
        \begin{equation*}
            Z_i := \exp\left[  \inner{Cw, \sum_{k = s}^i \theta_k \xi_k} - \frac{MC^2}{2}\sum_{k = s}^i \theta_k^2 \norm{w}^2  \right].
        \end{equation*} 
        Then  for any $i\geq 0$, we have that $\bb{E}[Z_{i+1} | \ca{F}_i] \leq Z_{i}$. 
	Hence for any $\delta > 0$, and any $C > 0$, it holds that 
	\begin{equation*}
		\begin{aligned}
			&\bb{P}\left( \sup_{s\leq i \leq \Lambda(\lambda_s + T)} \inner{w, \sum_{k = s}^i \theta_k \xi_k} > \delta \right)={}\bb{P}\left( \sup_{s\leq i \leq \Lambda(\lambda_s + T)} \inner{Cw, \sum_{k = s}^i \theta_k \xi_k} > C\delta \right)\\
			\leq{}& \bb{P}\left( \sup_{s\leq i \leq \Lambda(\lambda_s + T)} Z_i > \exp\left( C\delta - \frac{MC^2}{2} \sum_{k = s}^{\Lambda(\lambda_s + T)} \theta_k^2 \norm{w}^2 \right) \right)\\
			\leq{}& \exp\left( \left(\frac{M}{2}\norm{w}^2 \sum_{k = s}^{\Lambda(\lambda_s + T)} \theta_k^2\right)C^2   - \delta C \right).
		\end{aligned}
	\end{equation*}
	Here the second inequality holds since $\{Z_k\}$  is a nonnegative super-martingale and $\bb{E}[Z_{s}] \leq 1$. Then from the arbitrariness of $C$, set $C = \frac{\delta}{M\norm{w}^2 \sum_{k = s}^{\Lambda(\lambda_s + T)}\theta_k^2}$, it holds that 
	\begin{equation*}
		\bb{P}\left( \sup_{s\leq i \leq \Lambda(\lambda_s + T)} \inner{w, \sum_{k = s}^i \theta_k \xi_k} > \delta \right) \leq \exp \left(\frac{-\delta^2}{2M\norm{w}^2 \sum_{k = s}^{\Lambda(\lambda_s + T)}\theta_k^2 }\right). 
	\end{equation*}
	From the arbitrariness of $w$, there exists constants $C_1$ and $C_2$ that only depend on $n$ such that 
	\begin{equation*}
		\bb{P}\left( \sup_{s\leq i \leq \Lambda(\lambda_s + T)} \norm{\sum_{k = s}^i \theta_k \xi_k} > \delta \right) \leq  C_1 \exp\left(\frac{-\delta^2}{2M C_2\sum_{k = s}^{\Lambda(\lambda_s + T)} \theta_k^2 }\right) \leq C_1\exp\left(\frac{-\delta^2}{2M C_2T\frac{\theta_{k'}^2}{\eta_{k'}} }\right),
	\end{equation*}
	holds for some $k' \in [s, \Lambda(\lambda_s + T)]$. Here $\{\etak\}$ refers to the diminishing sequence of real numbers as defined in the condition of this proposition.

	Therefore, for any $j \geq 0$, there exists $k_j\in [\Lambda(jT), \Lambda((j+1)T) ]$, such that
	\begin{equation*}
		\begin{aligned}
			&\sum_{j = 0}^{+\infty} \bb{P}\left(\sup_{\Lambda(jT)\leq i \leq \Lambda( jT+T)}\norm{ \sum_{k = s}^{i}\theta_k \xi_k} \geq \delta \right) \\
			\leq{}& \sum_{j=0}^{+\infty} C_1\exp\left( \frac{-\delta^2}{2MC_2T \eta_{k_j}^{-1}\theta_{k_j}^2} \right) \leq \sum_{k=0}^{+\infty} 2 C_1\exp\left( \frac{-\delta^2}{2MC_2T\frac{\theta_k^2}{\eta_k}} \right) < +\infty. 
		\end{aligned}
	\end{equation*}
	Here the last inequality holds from the fact that $\lim_{k \to +\infty}\frac{\theta_k^2}{\eta_k} \log(k) = 0 $. 
	Therefore, we can conclude that 
	\begin{equation*}
		\lim_{j\to +\infty}\bb{P}\left(\sup_{\Lambda(jT)\leq i \leq \Lambda( jT+T)}\norm{ \sum_{k = \Lambda(jT)}^{i}\theta_k \xi_k} \geq \delta\right) = 0,
	\end{equation*}
	holds almost surely for any $\delta >0$. Then the arbitrariness of $\delta$ illustrates that almost surely, we have
	\begin{equation*}
		\lim_{j \to +\infty} \sup_{\Lambda(jT)\leq i \leq \Lambda( jT+T)}\norm{ \sum_{k = \Lambda(jT)}^{i}\theta_k \xi_k} = 0. 
	\end{equation*}
	Finally, notice that for any $jT \leq s\leq jT+T$, it holds that 
	\begin{equation*}
		\begin{aligned}
		    &\sup_{s\leq i \leq \Lambda( \lambda_s+T)}\norm{ \sum_{k = \Lambda(jT)}^{i}\theta_k \xi_k} \\
        \leq{}& 2\sup_{\Lambda(jT)\leq i \leq \Lambda( jT+T)}\norm{ \sum_{k = \Lambda(jT)}^{i}\theta_k \xi_k} + \sup_{\Lambda((j+1)T)\leq i \leq \Lambda( (j+2)T)}\norm{ \sum_{k = \Lambda(jT + T)}^{i}\theta_k \xi_k}. 
		\end{aligned}
	\end{equation*}
	Then we achieve that 
	\begin{equation*}
		\lim_{s \to +\infty} \sup_{s\leq i \leq \Lambda( \lambda_s+T)}\norm{ \sum_{k = s}^{i}\theta_k \xi_k} = 0,
	\end{equation*}
        holds almost surely. 
	Hence we complete the proof. 
\end{proof}

\subsection{Nonsmooth Analysis}
\label{Section_Nonsmooth_Analysis}

\subsubsection{Clarke Subdifferential}
In this part, we introduce the concept of Clarke subdifferential \citep{clarke1990optimization}, which plays an important role in characterizing the stationarity and designing efficient algorithms for nonsmooth optimization problems.

\begin{defin}[\citet{clarke1990optimization}]
	\label{Defin_Subdifferential}
	For any given locally Lipschitz continuous function $f: \Rn \to \bb{R}$ and any $x \in \Rn$, the generalized directional derivative of $f$ at $x$  in the direction $d \in \Rn$, denoted by $f^\circ(x; d)$, is defined as 
	\begin{equation*}
		f^\circ(x; d) := \mathop{\lim\sup}_{\tilde{x}\to x, ~t \downarrow  0}~ \frac{f(\tilde{x} + td) - f(\tilde{x})}{t}. 
	\end{equation*}
	Then the generalized gradient or the Clarke subdifferential of $f$ at $x$, denoted by $\partial f(x)$, is defined as 
	\begin{equation*}
		\begin{aligned}
			\partial f(x) := &\left\{ w \in \Rn :  \inner{w, d} \leq f^\circ (x; d), \text{ for all } d \in \Rn \right\}.
		\end{aligned}
	\end{equation*}
\end{defin}

Then based on the concept of generalized directional derivative, we present the definition of (Clarke) regular functions. 
\begin{defin}[\citet{clarke1990optimization}]
	\label{Defin_Clarke_regular}
	For any given locally Lipschitz continuous function $f: \Rn \to \bb{R}$ and any $x \in \Rn$, we say that $f$ is (Clarke) regular at $x \in \Rn$ if for every direction $d\in \Rn$, the one-sided directional derivative 
	\begin{equation*}
		f^\star(x;d) := \lim_{t \downarrow 0} \frac{f(x + td) - f(x)}{t} 
	\end{equation*}
	exists and $f^\star(x; d) = f^\circ(x;d)$.
\end{defin}

\subsubsection{Conservative Field}
In this part, we present a brief introduction on the conservative field, which can be applied to characterize how the nonsmooth neural networks are differentiated by AD algorithms. 

\begin{defin}
	A set-valued mapping $\ca{D}: \Rn \rightrightarrows \bb{R}^s$ is a mapping from $\Rn$ to a collection of subsets of $\bb{R}^s$. $\D$ is said to have closed graph (or $\D$ is graph-closed) if the graph of $\ca{D}$, defined by
	\begin{equation*}
		\mathrm{graph}(\D) := \left\{ (w,z) \in \Rn \times \bb{R}^s: w \in \Rn, z \in \D(w) \right\},
	\end{equation*}
	is a closed subset of $\Rn \times \bb{R}^s$.  
\end{defin}

\begin{defin}
	A set-valued mapping $\ca{D}: \Rn \rightrightarrows \bb{R}^s$ is said to be locally bounded if, for any $x \in \Rn$, there is a neighborhood $V_x$ of $x$ such that $\cup_{y \in V_x}\ca{D}(y)$ is bounded. 
\end{defin}

\begin{defin}[Aumann’s integral]
    \label{Defin_Aumann_integral}
    Let $(\Theta, \ca{F}, P)$ be a measurable space, and $\D : \Rn  \times \Theta \rightrightarrows \Rn$ be a measurable set-valued mapping. Then for all $x \in \Rn$, the integral of $\ca{D}$ with respect to $P$ is defined as 
    \begin{equation*}
        \bb{E}_{s \sim P}\left[ \D(x, s)  \right] := \left\{ \int_{\Theta} \chi(x, s) ~\mathrm{d}P(s): \text{  $\chi(x, \cdot)$  is integrable, and $\chi(x, s) \in \D(x, s)$ for any $s \in \Theta$}  \right\}. 
    \end{equation*}
\end{defin}

The following lemma illustrates that the composition of two locally bounded graph-closed set-valued mappings is locally bounded and graph-closed. Therefore, we can easily verify the graph-closeness for the composition of set-valued mappings, which plays an important role in our theoretical analysis. 

\begin{lem}[Lemma 2.5 in \citep{xiao2023convergence}]
	\label{Le_closed_graph}
	Suppose $\D_1: \Rn \rightrightarrows \bb{R}^s$ and $\D_2: \bb{R}^d \rightrightarrows \Rn$  are two  locally bounded graph-closed set-valued mappings, then their composition $\D_1 \circ \D_2$ is locally bounded and graph-closed. 
\end{lem}

In the following definitions, we present the definition for the conservative field and its corresponding potential function.

\begin{defin}
	An absolutely continuous curve is a continuous mapping $\gamma: \bb{R}_+ \to \Rn $ whose derivative $\gamma'$ exists almost everywhere in $\bb{R}_+$ and $\gamma(t) - \gamma(0)$ equals to the Lebesgue integral of $\gamma'$ between $0$ and $t$ for all $t \in \bb{R}_+$, i.e.,
	\begin{equation*}
		\gamma(t) = \gamma(0) + \int_{0}^t \gamma'(u) \mathrm{d} u, \qquad \text{for all $t \in \bb{R}_+$}.
	\end{equation*}
\end{defin}

\begin{defin}
	\label{Defin_conservative_field}
	Let $\ca{D}$ be a set-valued mapping from $\bb{R}^{n}$ to subsets of $\bb{R}^{n}$. Then we call $\ca{D}$ as a conservative field whenever it has closed graph,  nonempty compact valued, and for any absolutely continuous curve $\gamma: [0,1] \to \bb{R}^{n} $ satisfying $\gamma(0) = \gamma(1)$, we have
	\begin{equation}
		\label{Eq_Defin_Conservative_mappping}
		\int_{0}^1 \max_{v \in \ca{D}(\gamma(t)) } \inner{\gamma'(t), v} \mathrm{d}t = 0, 
	\end{equation}
	where the integral is understood in the Lebesgue sense. 
\end{defin}

It is important to note that any conservative field is locally bounded  \citep[Remark 3]{bolte2021conservative}. We now introduce the definition of the potential function corresponding to the conservative field.

\begin{defin}
	\label{Defin_conservative_field_path_int}
	Let $\ca{D}$ be a conservative field in $\bb{R}^{n}$. Then with any given $x_0 \in \bb{R}^{n}$, we can define a function $f: \Rn \to \bb{R}$ through the path integral
	\begin{equation}
		\label{Eq_Defin_CF}
		\begin{aligned}
			f(x) = &{} f(x_0) + \int_{0}^1 \max_{v \in \ca{D}(\gamma(t)) } \inner{\gamma'(t), v} \mathrm{d}t
			= f(x_0) + \int_{0}^1 \min_{v \in \ca{D}(\gamma(t)) } \inner{\gamma'(t), v} \mathrm{d}t,
		\end{aligned}
	\end{equation} 
	for any absolutely continuous curve $\gamma$ that satisfies $\gamma(0) = x_0$ and $\gamma(1) = x$.  Then $f$ is called a potential function for $\ca{D}$, and we also say $\ca{D}$ admits $f$ as its potential function, or that $\ca{D}$ is a conservative field for $f$. 
\end{defin}

The following two lemmas characterize the relationship between the conservative field and the Clarke subdifferential. 

\begin{lem}[Theorem 1 in \citet{bolte2021conservative}]
	\label{Le_Conservative_as_gradient}
	Let $f:\Rn \to \bb{R}$ be a potential function that admits $\D_f$ as its conservative field. Then $\D_f(x) = \{\nabla f(x)\}$ almost everywhere.  
\end{lem}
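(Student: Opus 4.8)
The statement to prove is Lemma~\ref{Le_Conservative_as_gradient}: if $f$ is a potential function admitting $\D_f$ as its conservative field, then $\D_f(x) = \{\nabla f(x)\}$ almost everywhere (in particular $f$ is differentiable a.e. and its gradient is recovered by the single-valued selection of $\D_f$).

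\medskip

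\textbf{Proof proposal.} The plan is to exploit the defining property \eqref{Eq_Defin_Conservative_mappping} along a rich family of line segments and combine it with Rademacher's theorem. First I would observe that $f$ is locally Lipschitz: since $\D_f$ is locally bounded (by \citep[Remark 3]{bolte2021conservative}, as noted after Definition~\ref{Defin_conservative_field}) and $f$ is defined through the path integral \eqref{Eq_Defin_CF}, bounding $\max_{v\in\D_f(\gamma(t))}\inner{\gamma'(t),v}$ along straight segments by the local bound on $\D_f$ times $\norm{\gamma'(t)}$ shows $f$ is locally Lipschitz. Hence by Rademacher's theorem $f$ is differentiable on a full-measure set $R \subseteq \Rn$. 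The goal then reduces to showing that for a.e.\ $x$, first $\D_f(x)$ is a singleton, and second that this singleton equals $\nabla f(x)$ when the latter exists.

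\medskip

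For the second part, I would argue pointwise on $R$: fix $x$ where $\nabla f(x)$ exists and fix an arbitrary direction $d$. Using \eqref{Eq_Defin_CF} along the segment $\gamma(t) = x + td$ for small $t$, the fundamental theorem of calculus gives $\inner{\nabla f(x), d} = \lim_{t\downarrow 0}\tfrac1t\int_0^t \max_{v\in\D_f(x+ud)}\inner{d,v}\,\mathrm du$; combining this with the ``min'' version of \eqref{Eq_Defin_CF} and using that $\D_f$ has closed graph (hence is outer semicontinuous) together with convex-compact values, one sees that for $u\downarrow 0$ every limit point of $\D_f(x+ud)$ lies in $\D_f(x)$, forcing $\max_{v\in\D_f(x)}\inner{d,v} = \inner{\nabla f(x),d} = \min_{v\in\D_f(x)}\inner{d,v}$. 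Since $d$ is arbitrary and $\D_f(x)$ is convex compact, this support-function identity in every direction forces $\D_f(x) = \{\nabla f(x)\}$. This simultaneously handles both "singleton" and "equals the gradient". A cleaner packaging: define $g(x)$ to be the unique selection when $\D_f(x)$ is a singleton; apply the closed-graph/outer-semicontinuity argument to upgrade "singleton at $x$" to "singleton on a neighborhood up to null sets" is not needed — the pointwise argument on $R$ already suffices once we know $R$ is full-measure.

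\medskip

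The main obstacle, and the step I would spend the most care on, is the interchange of limit and integral / the use of \eqref{Eq_Defin_Conservative_mappping} to pin down the one-sided behaviour of $t\mapsto \max_{v\in\D_f(x+td)}\inner{d,v}$ at $t=0$. The subtlety is that $\D_f$ is only outer semicontinuous, not continuous, so the integrand need not be continuous at $t=0$; one needs that for almost every segment direction $d$ (or, by a Fubini argument over directions, for a.e.\ $x$ and all $d$) the closed-graph property plus local boundedness gives $\limsup_{u\downarrow 0}\max_{v\in\D_f(x+ud)}\inner{d,v}\le \max_{v\in\D_f(x)}\inner{d,v}$ and the analogous liminf bound for the min, so that both the max- and min-integrals produce the same derivative $\inner{\nabla f(x),d}$. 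An alternative route that sidesteps the one-sided limit entirely is to use the closed-loop condition \eqref{Eq_Defin_Conservative_mappping} directly: on the full-measure set $R$ one has $f$ differentiable with $\nabla f(x)\in\D_f(x)$ (this last inclusion follows because the support function of $\D_f(x)$ dominates the directional derivatives of $f$ at points of differentiability, via the loop condition applied to small triangles), and then invoke \citep[Corollary 4 / the a.e.\ single-valuedness result]{bolte2021conservative} — but since this lemma \emph{is} that result, I would instead give the self-contained argument above. I expect the write-up to lean on Rademacher plus the loop integral over degenerate triangles (go out along $d$, come back), which makes the whole direction $d$ contribution vanish and forces $\max_{v\in\D_f(x+\cdot d)}\inner{d,v}$ to integrate to the same thing as $\min$, hence to agree with $\inner{\nabla f(x),d}$ a.e.
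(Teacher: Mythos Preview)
The paper does not supply its own proof of this lemma; it is simply quoted from \citet{bolte2021conservative} (Theorem~1) without argument. So there is no in-paper proof to compare against --- the comparison is to the original source.

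Your proposal has a genuine gap in the pointwise step. At a point $x$ where $f$ is differentiable, combining the integral identity with outer semicontinuity of $\D_f$ yields only
\[
\min_{v\in\D_f(x)}\inner{d,v}\;\le\;\inner{\nabla f(x),d}\;\le\;\max_{v\in\D_f(x)}\inner{d,v},
\]
i.e.\ $\nabla f(x)\in\D_f(x)$, \emph{not} the singleton conclusion. Concretely: the average $\tfrac{1}{t}\int_0^t \max_{v\in\D_f(x+ud)}\inner{d,v}\,\mathrm du$ tends to $\inner{\nabla f(x),d}$, while outer semicontinuity gives $\limsup_{u\downarrow 0}\max_{v\in\D_f(x+ud)}\inner{d,v}\le \max_{v\in\D_f(x)}\inner{d,v}$; so the limit of averages is only $\le$ the max at $x$, and equality can fail. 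A one-line counterexample: $f\equiv 0$ on $\bb{R}$, $\D_f(0)=[-1,1]$, $\D_f(x)=\{0\}$ for $x\neq 0$. This is a conservative field, $f$ is differentiable at $0$, yet $\D_f(0)\neq\{\nabla f(0)\}$. Since the lemma holds only \emph{almost} everywhere, a purely pointwise argument on the Rademacher set cannot work.

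The argument in \citet{bolte2021conservative} is genuinely measure-theoretic, and the missing idea in your sketch is Lebesgue differentiation along lines followed by Fubini. Fix a direction $d$; along each line $t\mapsto x_0+td$ the function $t\mapsto \max_{v\in\D_f(x_0+td)}\inner{d,v}$ is locally integrable with antiderivative $t\mapsto f(x_0+td)$, so by the Lebesgue differentiation theorem, for a.e.\ $t$ the \emph{integrand itself} equals the derivative (and likewise for the min version). Fubini then promotes this to a full-measure subset of $\Rn$ on which $\max_{v\in\D_f(x)}\inner{d,v}=\min_{v\in\D_f(x)}\inner{d,v}$ for that fixed $d$. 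Intersecting over a countable dense set of directions and using that $\D_f(x)$ is convex compact (so its support function is continuous in $d$) yields a full-measure set on which $\D_f(x)$ is a singleton, which must then be $\nabla f(x)$. Your averaging argument recovers only the limit of means, not the a.e.\ value of the integrand; Lebesgue differentiation is what closes the gap.
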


\begin{lem}[Corollary 1 in \citet{bolte2021conservative}]
	Let $f:\Rn \to \bb{R}$ be a potential function that admits $\D_f$ as its conservative field. Then $\partial f$ is a conservative field for $f$, and for all $x \in \Rn$, it holds that 
	\begin{equation*}
		\partial f(x) \subseteq \conv\left(\D_f(x)\right). 
	\end{equation*} 
\end{lem}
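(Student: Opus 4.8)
\emph{Overall strategy.} I would split the claim into three parts: (i) $f$ is locally Lipschitz and $\partial f$ has the structural properties demanded by Definition~\ref{Defin_conservative_field}; (ii) the inclusion $\partial f(x)\subseteq\D_f(x)$; and (iii) the loop‑integral identity \eqref{Eq_Defin_Conservative_mappping} for $\partial f$. Parts (i) and (ii) are essentially bookkeeping built on Lemma~\ref{Le_Conservative_as_gradient}; part (iii) is where the real work is, and I would prove it by sandwiching $(f\circ\gamma)'$ between the lower and upper support functions of $\partial f$ and then of $\D_f$ along any absolutely continuous curve.

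\emph{Step 1: regularity.} Since every conservative field is locally bounded, each $x$ has a convex neighbourhood $V_x$ with $\sup_{y\in V_x}\sup_{v\in\D_f(y)}\norm{v}\le M_x<\infty$. Applying the path‑integral identity \eqref{Eq_Defin_CF} to the segment $\gamma(t)=y+t(z-y)$ with $y,z\in V_x$ gives $\abs{f(z)-f(y)}\le M_x\norm{z-y}$, so $f$ is locally Lipschitz. Then, by the classical theory of \citet{clarke1990optimization}, $\partial f$ is nonempty, convex‑ and compact‑valued, locally bounded and upper semicontinuous, hence has closed graph. These are exactly the standing requirements in Definition~\ref{Defin_conservative_field}, so only the inclusion and the loop integral remain.

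\emph{Step 2: the inclusion.} By Lemma~\ref{Le_Conservative_as_gradient} there is a Lebesgue‑null set $S$ such that $f$ is differentiable and $\D_f(y)=\{\nabla f(y)\}$ for all $y\notin S$. The Rademacher‑based description of the Clarke subdifferential of a locally Lipschitz function gives $\partial f(x)=\conv\{\,\lim_j\nabla f(x_j):x_j\to x,\ x_j\notin S\,\}$. For any such convergent sequence, $(x_j,\nabla f(x_j))\in\mathrm{graph}(\D_f)$, so the closed graph of $\D_f$ forces $w:=\lim_j\nabla f(x_j)\in\D_f(x)$; since $\D_f(x)$ is convex, taking convex hulls yields $\partial f(x)\subseteq\D_f(x)$.

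\emph{Step 3: the loop integral.} Fix an absolutely continuous $\gamma:[0,1]\to\Rn$ with $a:=\gamma(0)$, $b:=\gamma(1)$. Because $f$ is locally Lipschitz and $\gamma$ absolutely continuous, $f\circ\gamma$ is absolutely continuous, so $f(b)-f(a)=\int_0^1 (f\circ\gamma)'(t)\,\mathrm{d}t$. At a.e.\ $t$ (where $\gamma'(t)$ exists and $f$ is Lipschitz near $\gamma(t)$), the first‑order expansion $f(\gamma(t+s))=f(\gamma(t)+s\gamma'(t))+o(s)$ together with the definition of $f^\circ$ yields the pointwise estimate
\begin{equation*}
	\min_{v\in\D_f(\gamma(t))}\inner{\gamma'(t),v}\;\le\;\min_{v\in\partial f(\gamma(t))}\inner{\gamma'(t),v}\;\le\;(f\circ\gamma)'(t)\;\le\;\max_{v\in\partial f(\gamma(t))}\inner{\gamma'(t),v}\;\le\;\max_{v\in\D_f(\gamma(t))}\inner{\gamma'(t),v},
\end{equation*}
where the outer two inequalities use Step~2. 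Integrating over $[0,1]$, the two extreme integrals both equal $f(b)-f(a)$ by the defining identity \eqref{Eq_Defin_CF} for the potential function of $\D_f$, and $\int_0^1(f\circ\gamma)'=f(b)-f(a)$ as well; hence all the integrals coincide. In particular $\int_0^1\max_{v\in\partial f(\gamma(t))}\inner{\gamma'(t),v}\,\mathrm{d}t=f(b)-f(a)$, which is $0$ whenever $\gamma(0)=\gamma(1)$, establishing \eqref{Eq_Defin_Conservative_mappping} for $\partial f$.

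\emph{Main obstacle.} The delicate point is Step~3: one must justify the a.e.\ chain‑rule sandwich and, more importantly, make sure $t\mapsto\max_{v\in\partial f(\gamma(t))}\inner{\gamma'(t),v}$ (and its $\D_f$ analogue) is Lebesgue‑measurable so the integrals are meaningful — this uses the closed graph and local boundedness of $\partial f$ (e.g.\ via Lemma~\ref{Le_closed_graph}, applied to the support function) and the upper semicontinuity of $(x,d)\mapsto f^\circ(x;d)$. By contrast, Step~2 is routine once Lemma~\ref{Le_Conservative_as_gradient} is available.
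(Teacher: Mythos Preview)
The paper does not actually prove this lemma: it is stated with attribution to \citet[Corollary~1]{bolte2021conservative} and no argument is supplied, so there is no in-paper proof to compare against. Your three-step argument is correct and is essentially the standard route taken in the cited reference: local Lipschitzness from local boundedness of $\D_f$ plus the path integral, the inclusion $\partial f\subseteq\D_f$ via Lemma~\ref{Le_Conservative_as_gradient} together with the Rademacher/closed-graph description of $\partial f$, and the conservativity of $\partial f$ from the a.e.\ sandwich $-f^\circ(\gamma;-\gamma')\le(f\circ\gamma)'\le f^\circ(\gamma;\gamma')$ squeezed between the $\D_f$ bounds. The measurability caveat you flag is the right technical point to watch; it is handled by upper semicontinuity of $(x,d)\mapsto f^\circ(x;d)$ and measurable-selection arguments, exactly as you indicate.
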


\begin{defin}
    Let $f:\Rn \to \bb{R}$ be a potential function that admits $\D_f$ as its conservative field, then we say $x$ is a $\D_f$-stationary point of $f$ if $0 \in \D_f(x)$. In particular, we say $x$ is a $\partial f$-stationary point of $f$ if $0 \in \partial f(x)$. 
\end{defin}

It is worth mentioning that the class of potential functions is general enough to cover the objectives in a wide range of real-world problems. As shown in \citet[Section 5.1]{davis2020stochastic}, any Clarke regular function is a potential function. Another important function class is the definable functions (i.e. the functions whose graphs are definable in an $o$-minimal structure) \citep[Definition 5.10]{davis2020stochastic}. As demonstrated in \citet{van1996geometric}, any definable function is also a potential function \citep{davis2020stochastic,bolte2021conservative}. To characterize the definable functions, the Tarski–Seidenberg theorem \citep{bierstone1988semianalytic} shows that any semi-algebraic function is definable. Moreover, \citet{wilkie1996model} shows there exists an $o$-minimal structure that contains both the graph of the exponential function and all semi-algebraic sets. As a result, numerous common activation and loss functions, including sigmoid, softplus, ReLU, $\ell_1$-loss, MSE loss, hinge loss, logistic loss, and cross-entropy loss, are all definable. Additionally, \citet{bolte2021nonsmooth} reveals that parameterized solutions in a broad class of optimizations are definable.

Additionally, it should be noted that definability is preserved under finite summation and composition \citep{davis2020stochastic,bolte2021conservative}. As a result, for any neural network built from definable blocks, its loss function is definable and thus is a potential function. Moreover, the Clarke subdifferential of definable functions are definable \citep{bolte2021conservative}. Therefore, for any neural network built from definable blocks, the conservative field corresponding to the AD algorithms is definable. The following proposition shows that the definability of $f$ and $\mathcal{D}_f$ leads to the nonsmooth Morse–Sard property \citep{bolte2007clarke} for \ref{Prob_Ori}. 
\begin{prop}[Theorem 5 in \citet{bolte2021conservative}]
	\label{Prop_definable_regularity}
	Let $f$ be a potential function that admits $\D_f$ as its conservative field. Suppose both $f$ and $\D_f$ are definable over $\Rn$, then the set $\{f(x): 0\in \D_f(x)\}$ is finite. 
\end{prop}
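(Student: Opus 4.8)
The plan is to reduce the statement to a standard stratification argument once we have the ``projection formula'' for definable conservative fields, using definability to upgrade the classical Morse--Sard measure-zero conclusion to genuine finiteness. First I would set $S := \{ x \in \Rn : \bzr \in \D_f(x) \}$, so the set in question is the image $f(S) \subseteq \bb{R}$. Since $\D_f$ is definable, $\mathrm{graph}(\D_f)$ is definable, hence so is $S = \{ x : (x,\bzr) \in \mathrm{graph}(\D_f) \}$; and since $f$ is definable, $f(S)$ is a definable subset of $\bb{R}$. It therefore suffices to show $f(S)$ is finite. Now apply the definable $C^1$ stratification theorem in $o$-minimal structures to obtain a finite partition of $\Rn$ into definable $C^1$ submanifolds $M_1,\dots,M_N$ that is compatible with $S$ (each $M_i$ is either contained in $S$ or disjoint from it) and on each of which $f$ is $C^1$. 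Then $f(S) = \bigcup_{M_i \subseteq S} f(M_i)$, so it is enough to show $f(M_i)$ is finite for each stratum $M_i \subseteq S$.

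The key tool is the \emph{projection formula}: for every stratum $M$ (on which $f$ is $C^1$) and every $x \in M$, the orthogonal projection of $\D_f(x)$ onto the tangent space $T_x M$ is the singleton $\{ \nabla(f|_M)(x) \}$, where $\nabla(f|_M)$ denotes the Riemannian gradient of the restriction $f|_M$. To establish it I would use the chain rule for conservative fields: for any $C^1$ curve $\gamma$ lying in $M$ one has $(f\circ\gamma)'(t) = \inner{\dot\gamma(t), v}$ for a.e.\ $t$ and every $v \in \D_f(\gamma(t))$ (a structural property of conservative fields, extracted from the closed-curve integral condition in Definition~\ref{Defin_conservative_field}), whereas the classical chain rule on $M$ gives $(f\circ\gamma)'(t) = \inner{\dot\gamma(t), \nabla(f|_M)(\gamma(t))}$. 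Equating these and letting $\dot\gamma$ realize an arbitrary tangent vector yields, for a.e.\ $x \in M$, that $v - \nabla(f|_M)(x) \perp T_x M$ for every $v \in \D_f(x)$; this then extends to all of $M$ using the closed graph of $\D_f$ together with the continuity of $\nabla(f|_M)$. In particular, if $x \in M$ and $\bzr \in \D_f(x)$, then $\nabla(f|_M)(x) = \bzr$.

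Finally, fix a stratum $M$ with $M \subseteq S$. By the projection formula, $\nabla(f|_M)(x) = \bzr$ for every $x \in M$, so the differential of the $C^1$ function $f|_M$ vanishes identically on $M$; hence $f|_M$ is constant on each connected component of $M$. Since definable sets have finitely many connected components, $f(M)$ is a finite set. Taking the union over the finitely many strata contained in $S$ shows that $f(S)$ is finite, which is the desired conclusion.

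I expect the main obstacle to be the projection formula, which rests on two non-elementary ingredients: the chain rule $(f\circ\gamma)'(t) = \inner{\dot\gamma(t), v}$ for $v \in \D_f(\gamma(t))$, which must be proved from the defining circulation condition of conservative fields (Definition~\ref{Defin_conservative_field}), and the definable $C^1$ stratification theorem, used with the refinement that the stratification may be taken compatible with the definable set $S$ and so that $f$ is $C^1$ on each stratum. Granting these, the remaining steps --- the linear-algebra step identifying the tangential projection with the Riemannian gradient, the locally-constant deduction on critical strata, and the counting of connected components --- are routine.
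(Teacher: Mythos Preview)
The paper does not supply its own proof of this proposition: it is quoted verbatim as Theorem~5 of \citet{bolte2021conservative} and used as a black box. Your proposal is a faithful reconstruction of the argument in that reference. The three pillars you identify --- definability of $S$ and $f(S)$, the definable $C^1$ stratification compatible with $S$, and the projection formula $\mathrm{proj}_{T_xM}\D_f(x)=\{\nabla(f|_M)(x)\}$ on each stratum --- are exactly the ingredients Bolte and Pauwels assemble, and your deduction (vanishing Riemannian gradient on strata inside $S$, hence local constancy, hence finiteness via finitely many connected components) is the same.

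One small comment on your sketch of the projection formula: the chain-rule identity $(f\circ\gamma)'(t)=\inner{\dot\gamma(t),v}$ for every $v\in\D_f(\gamma(t))$ and a.e.\ $t$ is precisely the content of Definition~\ref{Defin_conservative_field_path_int} (the $\max$ and $\min$ over $\D_f(\gamma(t))$ coincide for a.e.\ $t$), so you need not re-derive it from the closed-loop condition. With that in hand, the passage from ``a.e.\ $x$ on a curve'' to ``all $x\in M$'' does require a density-plus-closed-graph argument, which you flag correctly; in the original this is handled by taking a Whitney stratification and using that the tangent bundle varies continuously on each stratum. Otherwise the plan is sound and matches the cited source.
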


\subsection{Differential Inclusion and Stochastic Subgradient Methods}
In this subsection, we introduce some fundamental concepts related to the differential equation (i.e., differential inclusion) that are essential for the proofs presented in this paper.
\begin{defin}
	For any locally bounded set-valued mapping $\ca{D}: \Rn \rightrightarrows \Rn$ that is nonempty compact convex valued and has closed graph.  We say the absolutely continuous path $x(t)$ in $\Rn$ is a solution for the differential inclusion 
	\begin{equation}
		\label{Eq_def_DI}
		\frac{\mathrm{d} x}{\mathrm{d}t} \in \ca{D}(x),
	\end{equation}
	with initial point $x_0$ if $x(0) = x_0$, and $\dot{x}(t) \in \ca{D}(x(t))$ holds for almost every $t\geq 0$. 
\end{defin}

\begin{defin}
    \label{Defin_delta_expansion}
    For any given set-valued mapping $\D: \Rn \rightrightarrows \Rn$ and any constant $\delta \geq 0$,  the set-valued mapping $\D^{\delta}$ is defined as
    \begin{equation}
        \D^{\delta}(x) := \{ w \in \Rn: \exists z \in \bb{B}_{\delta}(x), \, \mathrm{dist}(w, \ca{D}(z))\leq \delta \}.
    \end{equation}
\end{defin}

For the differential inclusion \eqref{Eq_def_DI}, we introduce the concept on its perturbed solution in the following definition.
\begin{defin}[Definition II in \citet{benaim2005stochastic}]
	\label{Defin_perturbed_solution}
	We say an absolutely continuous function $\gamma$
	 is a perturbed solution to \eqref{Eq_def_DI}  if there exists a locally integrable function $u: \bb{R}_+ \to \Rn$, such that 
	\begin{itemize}
		\item For any $T>0$, it holds that $\lim\limits_{t \to +\infty} \sup\limits_{0\leq l\leq T} \norm{\int_{t}^{t+l} u(s) ~\mathrm{d}s} = 0$. 
		\item There exists $\delta: \bb{R}_+ \to \bb{R}$ such that $\lim\limits_{t \to +\infty} \delta(t) = 0$ and 
		\begin{equation*}
			\dot{\gamma}(t) - u(t) \in \D^{\delta(t)}(\gamma(t)). 
		\end{equation*}
	\end{itemize}
\end{defin}

Now consider the sequence $\{\xk\}$ generated by the  following updating scheme,  
\begin{equation}
	\label{Eq_def_Iter}
	\xkp = \xk + \eta_k(d_k + \xi_k),
\end{equation}
where $\{\eta_k\}$ is a diminishing positive sequence of real numbers. 
We define the  (continuous-time) interpolated process of $\{\xk\}$ generated by \eqref{Eq_def_Iter} as follows. 
\begin{defin}
	The  (continuous-time) interpolated process of $\{\xk\}$ generated by \eqref{Eq_def_Iter} is the mapping $w: \bb{R}_+ \to \Rn$ such that 
	\begin{equation*}
		w(\lambda_i + s) := x_{i-1} + \frac{s}{\lambda_i - \lambda_{i-1}} \left( x_{i} - x_{i-1} \right), \quad s\in[0, \eta_i). 
	\end{equation*}
	Here $\lambda_0 := 0$, and $\lambda_i := \sum_{k = 0}^{i-1} \eta_k$.
\end{defin}

The following lemma is an extension of \citep[Proposition 1.3]{benaim2005stochastic}. Compared with \citep[Proposition 1.3]{benaim2005stochastic}, Lemma \ref{Le_interpolated_process} allows for inexact evaluations of the set-valued mapping $\D$, and shows that the interpolated process of $\{\xk\}$ from \eqref{Eq_def_Iter} is a perturbed solution of the differential inclusion \eqref{Eq_def_DI}.   
\begin{lem}
	\label{Le_interpolated_process}
	Let $\ca{D}: \Rn \rightrightarrows \Rn$ be a locally bounded set-valued mapping that is nonempty compact convex valued with closed graph.
	Suppose the following conditions hold in \eqref{Eq_def_Iter}:
	\begin{enumerate}
		\item For any $T> 0$, it holds that
		\begin{equation*}
			\lim_{s \to +\infty} \sup_{s\leq i \leq \Lambda(\lambda_s + T)}\norm{ \sum_{k = s}^{i} \eta_k \xi_k} =0.
		\end{equation*}
		\item There exist a positive sequence $\{\delta_k\}$  such that $\lim_{k\to +\infty} \delta_k = 0$ and $d_k \in \D^{\delta_k}(\xk)$.
		\item $\sup_{k \geq 0} \norm{\xk}<+\infty$, $\sup_{k \geq 0} \norm{d_k} < +\infty$. 
	\end{enumerate}
	Then the interpolated process of $\{\xk\}$ is a perturbed solution for \eqref{Eq_def_DI}. 
\end{lem}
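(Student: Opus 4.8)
The plan is to show that the interpolated process $w(t)$ of $\{\xk\}$ satisfies the definition of a perturbed solution (Definition \ref{Defin_perturbed_solution}) by explicitly exhibiting the locally integrable ``noise'' function $u(t)$ and the vanishing error function $\delta(t)$. First I would set up $u$ on each interval $[\lambda_i, \lambda_{i+1})$ so that, roughly, $\int_{\lambda_s}^{\lambda_s + l} u(s)\,\mathrm{d}s$ reproduces the partial sums $\sum_{k=s}^{i}\eta_k \xi_k$ appearing in condition 1 — the natural choice is to make $u$ piecewise constant, equal to $-\xi_i$ (suitably scaled by the interpolation) on the $i$-th interval. With this choice, condition 1 of the lemma translates directly into the first bullet of Definition \ref{Defin_perturbed_solution}, namely $\lim_{t\to\infty}\sup_{0\le l\le T}\norm{\int_t^{t+l} u(s)\,\mathrm{d}s} = 0$, modulo handling the fact that $t$ need not be a breakpoint $\lambda_s$; the usual trick of splitting $[t, t+l]$ into a ``head'' partial interval and a union of full intervals takes care of that, exactly in the spirit of the final splitting step in the proof of Proposition \ref{Prop_UB_martingale_difference_sequence}.

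Next I would verify the second bullet: that $\dot{\gamma}(t) - u(t)$ lies in the inflated set $\{w : \exists z \in \bb{B}_{\delta(t)}(\gamma(t)),\ \mathrm{dist}(w, \ca{D}(z)) \le \delta(t)\}$. On the interior of the $i$-th interval, $w$ is affine with slope $(x_{i+1}-x_i)/\eta_i = -(d_i + \xi_i)$, so $\dot w(t) - u(t) = -d_i$ (with the above choice of $u$). By condition 2, $\mathrm{dist}(d_i, \ca{D}(x_i)) \to 0$; and since the stepsizes are diminishing and $\{d_k\}$ is uniformly bounded (condition 3), the point $x_i$ is within $O(\eta_i)$ of $w(t)$ for $t$ in the $i$-th interval, so $x_i \in \bb{B}_{\delta(t)}(w(t))$ for an appropriate $\delta(t) \to 0$. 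Taking $\delta(t)$ to be the maximum of $\mathrm{dist}(d_i, \ca{D}(x_i))$ and the sup of $\norm{d_k}$ times the relevant stepsize over indices $i \ge \Lambda(t)$ gives a valid, vanishing $\delta$. Condition 3 also guarantees $w$ is bounded (hence, together with local boundedness of $\ca{D}$, that everything in sight stays in a compact set) and Lipschitz, so $\gamma = w$ is absolutely continuous as required.

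The main obstacle I anticipate is purely bookkeeping rather than conceptual: matching up the discrete time index $k$ with the continuous time $t$ through the nonuniform grid $\{\lambda_k\}$, and in particular controlling the ``boundary'' contributions when $t$ falls strictly inside an interval $[\lambda_i, \lambda_{i+1})$ — one must ensure the partial first interval contributes at most $O(\eta_i) = o(1)$ to both the $u$-integral bound and the $\delta$ estimate. A secondary subtlety is that the interpolation itself reparametrizes the noise: on $[\lambda_i,\lambda_{i+1})$ one has $w(\lambda_i + s) = x_i - \frac{s}{\eta_i}\eta_i(d_i+\xi_i)$, so care is needed that the $u$ I define integrates to the right thing; but this is exactly the computation underlying \cite[Proposition 1.3]{benaim2005stochastic}, and condition 1 is precisely the hypothesis that makes the martingale-type term negligible. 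Modulo these routine estimates, the three hypotheses map one-to-one onto the two defining properties of a perturbed solution, so assembling them yields the claim.
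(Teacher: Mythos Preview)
Your proposal is correct and follows essentially the same route as the paper's proof: define $u$ piecewise constant equal to $-\xi_i$ on the $i$-th interval so that $\dot w(t) - u(t) = -d_{\Lambda(t)}$ and condition~1 yields the vanishing-integral property of Definition~\ref{Defin_perturbed_solution}, then take $\delta(t) = \norm{w(t) - x_{\Lambda(t)}} + \sup_{k \ge \Lambda(t)} \mathrm{dist}(d_k, \ca{D}(x_k))$ exactly as the paper does. One small correction: $\norm{w(t) - x_{\Lambda(t)}} \le \eta_{\Lambda(t)}\norm{d_{\Lambda(t)} + \xi_{\Lambda(t)}}$ involves $\xi$ as well as $d$, so it is $o(1)$ rather than $O(\eta_i)$ in general (the term $\eta_k\xi_k \to 0$ follows from condition~1 with $i=s$), but this does not affect your argument.
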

\begin{proof}
    Let $w: \bb{R}_+\to \Rn$ denote the interpolated process for \eqref{Eq_def_Iter}. Then define $u: \bb{R}_+\to \Rn$ as 
	\begin{equation*}
		u(\lambda_j + s):= \xi_{j}, \quad \text{for any $j\geq 0$}. 
	\end{equation*}
	
	Therefore, for any $t > 0$, it holds that
	\begin{equation*}
		\dot{w}(t) = u(t) + d_{\Lambda(t)}. 
	\end{equation*}
	Let 
	\begin{equation*}
		\delta(t) := \norm{w(t) - x_{\Lambda(t)}} + \sup_{k \geq \Lambda(t)} \delta_k.
	\end{equation*}
	Then it holds that 
	\begin{equation*}
		\dot{w}(t) - u(t) \in  \D^{\delta(t)}(w(t)) .
	\end{equation*}
	From the definition of $\delta(t)$, it holds that
	\begin{equation*}
		\limsup_{t \to +\infty} \delta(t) \leq \limsup_{t \to +\infty} \left(\eta_{\Lambda(t)}  \norm{\xi_{\Lambda(t)} + d_{\Lambda(t)}}  + \sup_{k \geq \Lambda(t)} \delta_k\right) = 0. 
	\end{equation*}
	In addition, from the definition of $u(t)$, we achieve
	\begin{equation*}
		\lim_{t \to +\infty} \sup_{0\leq l\leq T} \norm{\int_{t}^{t+l} u(s) \mathrm{d}s} = \lim_{s \to +\infty} \sup_{s\leq i \leq \Lambda(\lambda_s + T)}\norm{ \sum_{k = s}^{i} \eta_k \xi_k} =0.
	\end{equation*}
	Therefore, from Definition \ref{Defin_perturbed_solution}, we can conclude that $w$ is a perturbed solution for \eqref{Eq_def_DI}. This completes the proof. 
\end{proof}

\section{A General Framework for Convergence Properties}
\label{Section_3}

\subsection{Convergence to $\D_f$-stationary Points}
\label{Subsection_31}
In this subsection, we aim to show the convergence properties of the proposed abstract framework \eqref{Eq_framework}.  We first make the following assumptions on $f$.
\begin{assumpt}
	\label{Assumption_f}
	For the problem \ref{Prob_Ori}, we assume $f$ is locally Lipschitz continuous, bounded from below. Moreover, there exists a compact convex valued mapping $\D_f:  \Rn \rightrightarrows \Rn$ that has closed graph such that  
	\begin{enumerate}
		\item $f$ is a potential function that admits $\D_f$ as its conservative field. 
		\item The set $\{f(x): 0\in \D_f(x)\}$ has empty interior in $\bb{R}$. That is, the complementary of $\{f(x): 0\in \D_f(x)\}$ is dense in $\bb{R}$.  
	\end{enumerate}
\end{assumpt} 
As discussed in Section \ref{Section_Nonsmooth_Analysis}, Assumption \ref{Assumption_f}(1) is  satisfied in a wide range of applications of \ref{Prob_Ori}. Moreover, Assumption \ref{Assumption_f}(2) is the weak Sard's theorem, which has been shown to be a mild assumption, as demonstrated in  \citep{davis2020stochastic,castera2021inertial}.

Furthermore,  we make the following assumptions on the framework \eqref{Eq_framework}.
\begin{assumpt}
	\label{Assumption_alg}
	\begin{enumerate}
		\item The parameters satisfy $\alpha, \gamma \geq 0$, and $ \varepsilon, \tau_1, \tau_2 > 0$. 
		\item The sequences of iterates $\{\xk\}$, $\{\mk\}$ and $\{\vk\}$ are almost surely bounded, i.e., 
		\begin{equation*}
				\sup_{k\geq 0} \norm{\xk} + \norm{\mk} + \norm{\vk}  <+\infty
		\end{equation*}
		holds almost surely. 
		\item $\ca{U}$ is a locally bounded set-valued mapping 
		that is convex compact valued with closed graph. Moreover, there exists a constant $\kappa \geq 0$  such that for any $x, m, v \in \Rn$, it holds that 
		\begin{equation*}
			\widetilde{\mathrm{sign}}(v) \odot \ca{U}(x, m, v)  \geq \kappa |v| \geq 0.  
		\end{equation*}
		\item $(d_{x, k}, d_{m, k}, d_{v, k})$ is an approximated evaluation for $\ca{G}(\xk, m_k, v_k)$ in the sense that there exists a positive sequence of real numbers $\{\delta_k\}$ such that $\lim_{k\to +\infty} \delta_k = 0$ and 
		\begin{equation*}
                (d_{x, k}, d_{m, k}, d_{v, k}) \in \ca{G}^{\delta_k}(\xk, \mk, \vk).
		\end{equation*}
		\item $\{(\xi_{x, k}, \xi_{m, k}, \xi_{v, k})\}$ is a uniformly bounded martingale sequence. That is, almost surely, it holds for any $k \geq 1$ that 
		\begin{equation*}
			\bb{E}[(\xi_{x, k}, \xi_{m, k}, \xi_{v, k}) | \ca{F}_{k-1}] = 0, \quad \text{and} \quad \sup_{k \geq 0} ~\norm{(\xi_{x, k}, \xi_{m, k}, \xi_{v, k})} <+\infty. 
		\end{equation*}
		\item The stepsizes $\{\eta_k\}$ and $\{\theta_k\}$ are positive and satisfy
		\begin{equation*}
			\sum_{k = 0}^{+\infty} \eta_k = +\infty, ~ \sum_{k = 0}^{+\infty} \theta_k = +\infty,~  \lim_{k\to +\infty} \eta_k \log(k) = 0, ~ \text{and} ~ \lim_{k\to +\infty} \frac{\theta_k^2}{\eta_k} \log(k) = 0. 
		\end{equation*}
	\end{enumerate}
\end{assumpt}

Here are some comments for Assumption \ref{Assumption_alg}. Assumption \ref{Assumption_alg}(2) assumes that the generated sequence $\{(\xk, \mk, \vk)\}$ and the updating directions $\{(d_{x,k}, d_{m, k}, d_{v,k})\}$ are uniformly bounded, which is a common assumption in various existing works \citep{benaim2005stochastic,benaim2006dynamics,davis2020stochastic,bolte2021conservative,castera2021inertial}. 
Assumption \ref{Assumption_alg}(3) enforces regularity conditions on the set-valued mapping $\ca{U}$, which are satisfied in a wide range of adaptive stochastic gradient methods such as  Adam, AdaBelief, AMSGrad, NAdam, Yogi, as discussed later in Section \ref{Section_4}. Assumption \ref{Assumption_alg}(4) illustrates how $(d_{x,k}, d_{m, k}, d_{v,k})$ approximates $\ca{G}(\xk, \mk, \vk)$, which is a mild assumption commonly used in existing works  \citep{benaim2005stochastic,benaim2006dynamics,bolte2021conservative,castera2021inertial}. In addition, Assumption \ref{Assumption_alg}(5) is a prevalent assumption in various existing works \citep{bolte2021conservative,castera2021inertial}.

Furthermore, Assumption \ref{Assumption_alg}(6) allows for a flexible choice of the stepsize ${\eta_k}$ in \eqref{Eq_framework}, enabling it to be set in the order of $o(1/\log(k))$. It is easy to verify that a simple choice of $\theta_k = \eta_k$ satisfies Assumption \ref{Assumption_alg}(6). Hence the framework \eqref{Eq_framework} includes those cases where the evaluation noises are uniformly bounded.   More importantly, Assumption \ref{Assumption_alg}(6) allows for a two-timescale scheme for \eqref{Eq_framework} in the sense that we can choose $\{\theta_k\}$ satisfying $\theta_k/\eta_k \to +\infty$, since we can always set $\theta_k =\eta_k \left(\eta_k \log(k)\right)^{-s}$ for any $s \in (0, \frac{1}{2})$. As shown in Section \ref{Section_4}, the two-timescale framework is crucial for developing adaptive subgradient methods with gradient clipping techniques when the evaluation noises are only assumed to be integrable. The two-timescale updating scheme assumed in Assumption \ref{Assumption_alg} distinguishes our proposed framework \eqref{Eq_framework} from existing frameworks  in \citep{benaim2005stochastic,davis2020stochastic,bolte2021conservative}.

\begin{rmk}
    It is worth mentioning that some existing works have investigated the conditions to ensure the sequence of iterates to be uniformly bounded almost surely, including the proximal stochastic subgradient descent method \citep{davis2020stochastic}, the SGD method with constant stepsize \citep{bianchi2022convergence}, and noiseless heavy-ball SGD method \citep{josz2023global}. However, their analysis is limited within specific subgradient methods, making it challenging to establish similar results for  adaptive methods.   

    On the other hand, when $f$ is assumed to be differentiable, 
 \citet{barakat2021convergence,gadat2022asymptotic,li2022unified} utilize the Robbins-Siegmund theorem \citep{robbins1971convergence} to prove that the function values of their merit functions are uniformly bounded almost surely. Then based on the coercivity of their employed merit functions, they establish the uniform boundedness of the sequence generated by adaptive methods.  However, when $f$ is assumed to be nonsmooth and nonconvex, it is challenging to estimate the decrease of the objective function over the iterations. As a result,  their proof techniques cannot be applied to prove the uniform boundedness of $\{(\xk, \mk, \vk)\}$ within the context of the framework \eqref{Eq_framework} under nonsmooth settings. How to establish the uniform boundedness for the sequence $\{(\xk, \mk, \vk)\}$ under the framework \eqref{Eq_framework} still remains open.

\end{rmk}

To prove the convergence properties of \eqref{Eq_framework}, we consider the following differential inclusion
\begin{equation}
	\label{Eq_DI}
	\left(\frac{\mathrm{d} x}{\mathrm{d}t},\frac{\mathrm{d} m}{\mathrm{d}t},\frac{\mathrm{d} v}{\mathrm{d}t}\right)
	\in -\ca{G}(x, m, v). 
\end{equation}

Let the function $\phi: \Rn\times \Rn \times \Rn \to \bb{R}$ be defined as, 
\begin{equation*}
	\phi(x, m, v) := f(x) + \frac{1}{2\tau_1} \inner{m, (|v| + \varepsilon )^{-\gamma} \odot m},
\end{equation*}
and let the set $\ca{B}$ be chosen as $\ca{B}:= \{ (x, m, v) \in \Rn \times \Rn \times \Rn: 0 \in \D_f(x), m = 0 \}$. Then we have the following lemma to illustrate the relationship between $\D_f$-stationary points of $f$ and $\ca{B}$. The proof for Lemma \ref{Le_stable_set} is straightforward, hence we omit it for simplicity. 
\begin{lem}
	\label{Le_stable_set}
	The function $\phi$ is locally Lipschitz continuous over $\Rn\times \Rn \times \Rn$, and  $\{f(x): 0\in \D_f(x)\} = \{\phi(x, m, v): (x, m, v) \in \ca{B}\}$. 
\end{lem}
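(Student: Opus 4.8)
The plan is to dispatch the two assertions of Lemma~\ref{Le_stable_set} separately, both being elementary. For the local Lipschitz continuity of $\phi$, I would first invoke Assumption~\ref{Assumption_f}, which gives that $f$ is locally Lipschitz, so it remains to handle the quadratic-form term $q(m,v) := \frac{1}{2\tau_1}\inner{m, (|v| + \varepsilon )^{-\gamma} \odot m} = \frac{1}{2\tau_1}\sum_{i=1}^{n} m_i^2 (|v_i| + \varepsilon)^{-\gamma}$. Since $\varepsilon > 0$ and $\gamma \geq 0$ by Assumption~\ref{Assumption_alg}(1), the scalar map $t \mapsto (t+\varepsilon)^{-\gamma}$ is $C^1$, hence locally Lipschitz, on $[0,+\infty)$, and $v_i \mapsto |v_i|$ is globally $1$-Lipschitz; so $v_i \mapsto (|v_i| + \varepsilon)^{-\gamma}$ is locally Lipschitz and bounded on bounded sets. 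Likewise $m_i \mapsto m_i^2$ is smooth and bounded on bounded sets. A product of two locally bounded, locally Lipschitz scalar functions is locally Lipschitz, a finite sum of such is again locally Lipschitz, and composing/adding with the locally Lipschitz $x \mapsto f(x)$ preserves the property; hence $\phi$ is locally Lipschitz on $\Rn \times \Rn \times \Rn$.

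For the set equality, the key observation is that $\phi(x,0,v) = f(x)$ for every $v \in \Rn$, since the quadratic term vanishes identically when $m = 0$. I would then prove the two inclusions directly from the definition $\ca{B} = \{(x,m,v) : 0 \in \D_f(x),\ m = 0\}$. For ``$\subseteq$'': if $c = f(x)$ with $0 \in \D_f(x)$, then $(x,0,0) \in \ca{B}$ and $\phi(x,0,0) = f(x) = c$, so $c \in \{\phi(x,m,v) : (x,m,v) \in \ca{B}\}$. For ``$\supseteq$'': if $(x,m,v) \in \ca{B}$, then by definition $m = 0$ and $0 \in \D_f(x)$, whence $\phi(x,m,v) = \phi(x,0,v) = f(x) \in \{f(x) : 0 \in \D_f(x)\}$. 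Combining the two inclusions yields the claimed equality.

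There is no real obstacle in this argument; the only point deserving a moment's care is that the local Lipschitz estimate for $q$ genuinely relies on $\varepsilon > 0$ (Assumption~\ref{Assumption_alg}(1)) to keep $|v_i| + \varepsilon$ bounded away from the singularity of $t \mapsto t^{-\gamma}$ at the origin; without this the term $(|v_i|+\varepsilon)^{-\gamma}$ would fail to be even continuous along $v_i = 0$ when $\gamma > 0$. Since this is exactly what is assumed, the proof is short, which is why the statement remarks that it may be omitted.
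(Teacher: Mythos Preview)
Your proposal is correct; the paper itself omits the proof entirely as straightforward, and your argument---splitting into local Lipschitzness of the quadratic term via $\varepsilon>0$ and the two-inclusion verification of the set equality using $\phi(x,0,v)=f(x)$---is exactly the elementary check the authors have in mind.
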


% We begin our proof by the following lemma that illustrating that the sequences $\{(\xk, \mk, \vk)\}$ and $\{d_{x, k}, d_{m, k}, d_{v, k},)\}$ are uniformly bounded. 
% \begin{lem}
%     Suppose Assumption \ref{Assumption_alg} holds. Then for any sequence generated by  \eqref{Eq_framework}, it holds that 
%     \begin{equation}
%         \begin{aligned}
%             &\sup_{k\geq 0} \norm{\xk} +\norm{m_k} +  \norm{v_k}  <+\infty,\\
%             &\sup_{k\geq 0} \norm{d_{x, k}} + \norm{d_{m, k}}+\norm{d_{v, k}} <+\infty,
%         \end{aligned}
%     \end{equation}
% \end{lem}
% \begin{proof}
    
% \end{proof}

In the following lemma, we illustrate that $\phi$ is a potential function whenever $f$ is a potential function for $\D_f$, and investigate the expression of the conservative field for $\phi$. Lemma \ref{Le_lyapunov_conservative_field} directly follows from the expressions for $\phi$ and the validity of the chain rule for the conservative field \citep{bolte2021conservative}, hence we omit the proof for Lemma \ref{Le_lyapunov_conservative_field} for simplicity.

\begin{lem}
	\label{Le_lyapunov_conservative_field}
	Suppose $f$ is a potential function that admits $\D_f$ as its conservative field, then $\phi$ is a potential function that admits the conservative field $\D_{\phi}$ defined by
	\begin{equation*}
		\D_{\phi}(x, m, v) = 
		\left[\begin{matrix}
			\D_f(x)\\
			\frac{1}{\tau_1}(|v| + \varepsilon)^{-\gamma} \odot m\\
			-\frac{\gamma}{2\tau_1 } m^2 \odot (|v| + \varepsilon)^{-\gamma - 1} \odot \mathrm{sign}(v) 
		\end{matrix}\right]. 
	\end{equation*}
\end{lem}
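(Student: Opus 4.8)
The plan is to exhibit $\phi$ as a sum of two potential functions whose conservative fields are easy to identify, and then invoke the sum, product and ($C^1$) chain rules for conservative fields from \citet{bolte2021conservative}. I would write
\begin{equation}
	\phi(x, m, v) = (f\circ P)(x, m, v) + g(x, m, v), \qquad g(x,m,v) := \frac{1}{2\tau_1}\sum_{i=1}^n m_i^2\, \psi(v_i),
\end{equation}
where $P(x, m, v) := x$ is the linear projection onto the $x$-block and $\psi(s) := (|s| + \varepsilon)^{-\gamma}$, and then compute a conservative field for each summand separately.

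First I would handle $f\circ P$. Since $P$ is linear, hence $C^1$, and $f$ is a potential function admitting $\D_f$, the chain rule for conservative fields shows that $f\circ P$ is a potential function admitting $(x,m,v)\mapsto \D_f(x)\times\{0\}\times\{0\}$; alternatively this can be checked directly from Definition \ref{Defin_conservative_field}, since for any closed absolutely continuous curve $\sigma = (\sigma_x,\sigma_m,\sigma_v)$ the loop $\sigma_x$ is itself closed and the integrand in \eqref{Eq_Defin_Conservative_mappping} only involves the $x$-component, so it reduces to the loop integral for $f$ and $\D_f$.

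Next I would analyse $g$. The map $s\mapsto |s|$ is convex, hence Clarke regular, hence a potential function (as recalled in Section \ref{Section_Nonsmooth_Analysis}); and since the Clarke subdifferential of a potential function is again a conservative field for it, $\partial|\cdot| = \mathrm{sign}(\cdot)$ is a conservative field for $|\cdot|$, where $\mathrm{sign}$ has closed graph, is locally bounded and is nonempty convex compact valued. Because $\varepsilon>0$, the scalar map $t\mapsto (t+\varepsilon)^{-\gamma}$ is $C^1$ on a neighbourhood of $[0,+\infty)$ with derivative $-\gamma(t+\varepsilon)^{-\gamma-1}$, so the chain rule yields that $\psi$ is a potential function admitting $s\mapsto -\gamma(|s|+\varepsilon)^{-\gamma-1}\,\mathrm{sign}(s)$ (which equals $\{0\}$ when $\gamma=0$). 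Since $m_i\mapsto m_i^2$ is $C^1$ with derivative $2m_i$, the product rule for conservative fields applied to $m_i^2\,\psi(v_i)$, followed by summation over $i$, multiplication by $1/(2\tau_1)$, and zero-padding of the remaining coordinates, shows that $g$ is a potential function whose conservative field has $x$-block $\{0\}$, $m$-block $\frac{1}{\tau_1}(|v|+\varepsilon)^{-\gamma}\odot m$, and $v$-block $-\frac{\gamma}{2\tau_1}\, m^2\odot(|v|+\varepsilon)^{-\gamma-1}\odot\mathrm{sign}(v)$.

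Finally, the sum rule gives that $\phi = f\circ P + g$ is a potential function admitting $\D_\phi = \D_{f\circ P} + \D_g$, which is exactly the block expression in the statement. That $\D_\phi$ is nonempty convex compact valued with closed graph is inherited from $\D_f$ and $\mathrm{sign}$, since these properties are preserved under Cartesian products, sums, and multiplication by continuous scalar-valued maps (using Lemma \ref{Le_closed_graph} for the continuous post-compositions), and local boundedness follows for the same reasons. I expect the only delicate point to be confirming that every intermediate map satisfies the hypotheses of the sum, product and chain rules for conservative fields (in particular that the outer maps in each composition are genuinely $C^1$ on the relevant domain, which is where $\varepsilon>0$ is used); once that is in place, assembling the blocks of $\D_\phi$ is a routine elementwise computation.
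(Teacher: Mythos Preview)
Your proposal is correct and is essentially a detailed expansion of what the paper does: the paper simply states that the lemma ``directly follows the expressions for $\phi$ and the validity of the chain rule for the conservative field \citep{bolte2021conservative}'' and omits the proof. Your decomposition $\phi = f\circ P + g$ together with the sum, product and $C^1$ chain rules is exactly the intended argument, carried out explicitly.
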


The following proposition illustrates that $\phi$ is a Lyapunov function for $\ca{B}$ with respect to the differential inclusion \eqref{Eq_DI}. When $f$ is assumed to be differentiable, similar Lyapunov functions have been proposed in \citep{barakat2021stochastic,barakat2021convergence,gadat2022asymptotic}.

\begin{prop}
	\label{Prop_Lyapunov}
	Suppose Assumption \ref{Assumption_f} and Assumption \ref{Assumption_alg} hold with 
	$(1-\kappa)\gamma \tau_2 \leq 2\tau_1$. For any $(x_0, m_0, v_0) \notin \ca{B}$, let $(x(t), m(t), v(t))$ be any trajectory of the differential inclusion \eqref{Eq_DI} with initial point $(x_0, m_0, v_0)$. Then for any $t > 0$, it holds that 
	\begin{equation*}
		\phi(x(t), m(t), v(t))  <\phi(x(0), m(0), v(0)).
	\end{equation*}
	That is, $\phi$ is the Lyapunov function for $\ca{B}$ with respect to the differential inclusion \eqref{Eq_DI}.
\end{prop}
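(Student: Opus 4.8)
The plan is to show that $t \mapsto \phi(z(t))$, where $z(t) := (x(t),m(t),v(t))$, is non-increasing along any trajectory of \eqref{Eq_DI} and strictly decreasing unless the trajectory already lies in $\ca{B}$. By Lemma \ref{Le_lyapunov_conservative_field}, $\phi$ is a potential function with conservative field $\D_\phi$, and by Lemma \ref{Le_stable_set} it is locally Lipschitz; hence $t\mapsto\phi(z(t))$ is absolutely continuous and the chain rule for conservative fields gives, for a.e.\ $t$, $\frac{\mathrm{d}}{\mathrm{d}t}\phi(z(t)) = \inner{\dot z(t),\zeta}$ for \emph{every} $\zeta\in\D_\phi(z(t))$. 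Using a measurable selection I would write $\dot x(t) = -(|v(t)|+\varepsilon)^{-\gamma}\odot(m(t)+\alpha g(t))$, $\dot m(t) = \tau_1(g(t)-m(t))$ and $\dot v(t)=\tau_2(u(t)-v(t))$ with $g(t)\in\D_f(x(t))$, $u(t)\in\ca{U}(x(t),m(t),v(t))$, the \emph{same} $g(t)$ driving the $x$- and $m$-updates (this is what \eqref{Eq_framework} actually evaluates).

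The core computation is to evaluate the chain rule at the specific $\zeta\in\D_\phi(z(t))$ whose first block is $g(t)$ and whose third block uses the sign selection $\mathrm{sign}(v_i(t))$ where $v_i(t)\neq 0$ and $0$ otherwise. With this choice the indefinite cross-terms $\pm\inner{(|v|+\varepsilon)^{-\gamma}\odot m,\,g}$ arising from the $x$- and $m$-blocks cancel, leaving
\begin{multline*}
\frac{\mathrm{d}}{\mathrm{d}t}\phi(z(t)) = -\alpha\sum_i a_i g_i^2 - \sum_i a_i m_i^2 \\
- \frac{\gamma\tau_2}{2\tau_1}\sum_{i:\,v_i\neq 0} m_i^2\,(|v_i|+\varepsilon)^{-\gamma-1}\bigl(\widetilde{\mathrm{sign}}(v_i)\,u_i - |v_i|\bigr),
\end{multline*}
with $a_i := (|v_i(t)|+\varepsilon)^{-\gamma}>0$ and everything evaluated at $t$. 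By Assumption \ref{Assumption_alg}(3), $\widetilde{\mathrm{sign}}(v_i)u_i\geq\kappa|v_i|$, so the third term on the right is at most $\frac{(1-\kappa)\gamma\tau_2}{2\tau_1}\sum_{i:v_i\neq0}a_i m_i^2\,\frac{|v_i|}{|v_i|+\varepsilon}$ (using $(|v_i|+\varepsilon)^{-\gamma-1}|v_i| = a_i\frac{|v_i|}{|v_i|+\varepsilon}$); since $\frac{|v_i|}{|v_i|+\varepsilon}<1$ and $(1-\kappa)\gamma\tau_2\leq2\tau_1$, this is in turn at most $\sum_i a_i m_i^2$. Hence $\frac{\mathrm{d}}{\mathrm{d}t}\phi(z(t))\leq -\alpha\sum_i a_i g_i^2\leq0$, so $\phi(z(\cdot))$ is non-increasing and $\phi(z(t))\leq\phi(z(0))$ for all $t$.

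For the strict inequality I would argue by contradiction. If $\phi(z(t_0))=\phi(z(0))$ for some $t_0>0$, then $\frac{\mathrm{d}}{\mathrm{d}t}\phi(z(t))=0$ for a.e.\ $t\in[0,t_0]$. Inspecting the chain of estimates above, equality is possible only if $m(t)=0$ for a.e.\ $t$: the $-\sum_i a_i m_i^2$ term can be compensated only up to a factor $\frac{(1-\kappa)\gamma\tau_2}{2\tau_1}\cdot\frac{|v_i|}{|v_i|+\varepsilon}$ that is \emph{strictly} below $1$ in every coordinate — this is exactly where $\varepsilon>0$ and $(1-\kappa)\gamma\tau_2\leq2\tau_1$ are both needed — so no $a_i m_i^2$ can be positive. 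Then $m\equiv0$ on $[0,t_0]$, whence $\dot m\equiv0$ and therefore $g(t)=m(t)=0\in\D_f(x(t))$ for a.e.\ $t$; letting $t\downarrow0$ along this full-measure set and using that $\D_f$ has closed graph and $x(\cdot)$ is continuous yields $0\in\D_f(x(0))$. Combined with $m(0)=0$, this gives $z(0)\in\ca{B}$, contradicting $(x_0,m_0,v_0)\notin\ca{B}$. Therefore $\phi(z(t))<\phi(z(0))$ for every $t>0$, i.e.\ $\phi$ is a Lyapunov function for $\ca{B}$ with respect to \eqref{Eq_DI}.

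The step I expect to be the main obstacle is establishing the dissipation identity cleanly: one must exploit the freedom in the chain rule for $\D_\phi$ to pick $\zeta_f=g(t)$ so that the cross-terms between the $x$- and $m$-dynamics cancel (this is precisely why the same subgradient must appear in both blocks of $\ca{G}$), handle the coordinates with $v_i=0$ where $\mathrm{sign}(v_i)$ is an interval (here $\varepsilon>0$ keeps $(|v|+\varepsilon)^{-\gamma-1}$ finite), and then see that the parameter threshold $(1-\kappa)\gamma\tau_2\leq2\tau_1$ is tight enough — thanks to the \emph{strict} inequality $|v_i|/(|v_i|+\varepsilon)<1$ — to upgrade non-increase to strict decrease by forcing $m\equiv0$. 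The remaining parts (measurable selection, closed-graph limiting argument) are routine.
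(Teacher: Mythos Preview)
Your argument is correct, and the dissipation computation matches the paper's: both pick the element of $\D_\phi$ whose first block equals the subgradient driving the $m$-update so that the cross-terms cancel. (The paper actually writes separate selections $l_x,l_m\in\D_f(x)$, but its displayed identity $\ni -\alpha\inner{l_x,(|v|+\varepsilon)^{-\gamma}\odot l_x}-\inner{m,(|v|+\varepsilon)^{-\gamma}\odot m}+\cdots$ only holds when $l_x=l_m$; your insistence on a single $g(t)$ is the cleaner reading and is also what the algorithm \eqref{Eq_framework} actually does.)

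The genuine difference is in how strictness is obtained. The paper retains the explicit lower bound
\[
\frac{\mathrm{d}}{\mathrm{d}t}\phi(z(t))\ \leq\ -\varepsilon\,\delta_\gamma\,\inner{m(t),\,(|v(t)|+\varepsilon)^{-\gamma-1}\odot m(t)}
\]
and then splits into cases: if $m(0)\neq 0$, continuity gives $\int_0^T\norm{m}^2>0$ directly; if $m(0)=0$ but $0\notin\D_f(x(0))$, a hyperplane-separation argument combined with outer semicontinuity of $\D_f$ shows $\inner{m(t),w}\leq -\tfrac{\tau_1 c t}{2}$ on a short interval, so again $\int_0^T\norm{m}^2>0$. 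Your contradiction route---equality on $[0,t_0]$ forces $m\equiv 0$ there (using that the coefficient in front of $a_i m_i^2$ is strictly positive thanks to $\varepsilon>0$ and $(1-\kappa)\gamma\tau_2\leq 2\tau_1$), hence $\dot m\equiv 0$, hence $g(t)=0\in\D_f(x(t))$ a.e., and a closed-graph limit yields $0\in\D_f(x(0))$---avoids the case split and the separation argument entirely. Both are valid; the paper's version is more quantitative (it exhibits the rate of decrease in terms of $\norm{m}^2$), while yours is shorter and uses only the closed-graph property already in Assumption~\ref{Assumption_f}.
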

\begin{proof} 
	From the definition of \eqref{Eq_DI}, for any $(x(t), m(t), v(t))$ that is a trajectory of the differential inclusion \eqref{Eq_DI},   there exists measurable functions $l_{f}(s)$ and $l_{v}(s)$ such that for almost every $s \geq 0$,  $l_{f}(s) \in \D_f(x(s))$,  $l_v(s) \in \ca{U}(x(s), m(s), v(s))$,  and 
	\begin{equation*}
		(\dot{x}(s), \dot{m}(s), \dot{v}(s)) = -
		\left[\begin{matrix}
			(|v(s)| + \varepsilon )^{-\gamma} \odot \left(   m(s) + \alpha l_{f}(s) \right)\\
			\tau_1 m(s) - \tau_1 l_f(s)\\
			\tau_2 v(s) - \tau_2  l_v(s)\\
		\end{matrix}\right].
	\end{equation*}

	Therefore, from the expression of $\D_{\phi}$, we can conclude that 
	\begin{equation*}
		\begin{aligned}
			&\inner{(\dot{x}(s), \dot{m}(s), \dot{v}(s)), \ca{D}_{\phi}(x(s), m(s), v(s))}\\
			={}& \inner{\D_f(x(s)), -(|v(s)| + \varepsilon )^{-\gamma} \odot \left( m(s) + \alpha  l_f(s)\right)} \\
			&+ \frac{1}{\tau_1} \inner{ (|v(s)|+ \varepsilon )^{-\gamma} \odot m(s), -\tau_1 m(s) + \tau_1 l_f(s)}\\
			& + \frac{-\gamma}{2\tau_1} \inner{m(s)^2 \odot (|v(s)|+\varepsilon)^{-\gamma-1} \odot \widetilde{\mathrm{sign}}(v(s)), (-\tau_2 v(s) + \tau_2 l_v(s) )}\\
			\ni{}& -\alpha \inner{l_{f}(s), (|v(s)| + \varepsilon )^{-\gamma}\odot l_{f}(s)}  \\
			&  - \inner{m(s), (|v(s)| + \varepsilon )^{-\gamma} \odot m(s)} +\frac{\gamma \tau_2 }{2\tau_1}\inner{m(s)^2, (|v(s)|+\varepsilon)^{-\gamma-1} \odot|v(s)|}\\ 
			&- \frac{\gamma\tau_2 }{2\tau_1} \inner{m(s)^2  \odot (|v(s)|+\varepsilon)^{-\gamma-1},\widetilde{\mathrm{sign}}(v(s))  \odot l_v(s)}.
		\end{aligned}
	\end{equation*}

	Notice that for any $v \in \Rn$, 
	\begin{equation*}
		(|v|+\varepsilon)^{-\gamma - 1} \odot|v| = (|v| + \varepsilon)^{-\gamma} - \varepsilon(|v|+\varepsilon)^{-\gamma-1},
	\end{equation*}
	we have that  
	\begin{equation}
		\label{Eq_Prop_Lyapunov_0}
		\begin{aligned}
			&- \inner{m(s), (|v(s)| + \varepsilon )^{-\gamma} \odot m(s)} +\frac{\gamma \tau_2 }{2\tau_1}\inner{m(s)^2, (|v(s)|+\varepsilon)^{-\gamma-1} \odot|v(s)|}\\
			={}& -\left( 1 - \frac{\gamma \tau_2}{2\tau_1} \right)\inner{m(s), (|v(s)| + \varepsilon )^{-\gamma} \odot m(s)} 
			- \frac{\varepsilon \gamma \tau_2}{2\tau_1}\inner{m(s), (|v(s)| + \varepsilon )^{-\gamma-1} \odot m(s)}. 
		\end{aligned}
	\end{equation}
	Moreover, Assumption \ref{Assumption_alg}(3) illustrates that 
	$\widetilde{\mathrm{sign}}(v(s))  \odot U(x(s),m(s),v(s)) \geq \kappa |v(s)|$.
	Hence for any $s>0$, we have
	\begin{equation}
		\label{Eq_Prop_Lyapunov_1}
	 \begin{aligned}
	 	&\inner{m(s)^2  \odot (|v(s)|+\varepsilon)^{-\gamma-1},\widetilde{\mathrm{sign}}(v(s))  \odot l_v(s) } 
		\\
		\geq{} &
		\kappa 
		\inner{m(s)^2  \odot (|v(s)|+\varepsilon)^{-\gamma-1},|v(s)|} \geq 0. 
	 \end{aligned}
	\end{equation}	
        Additionally, under Assumption \ref{Assumption_alg}, let the positive constant $\delta_{\gamma}$ be defined as $$\delta_{\gamma} = \begin{cases}
		1, & \gamma = 0;\\
		\frac{\gamma\tau_2}{2\tau_1}, & \gamma > 0. 
	\end{cases}$$
        Then when $\gamma = 0$, it holds that
	\begin{equation*}
		\begin{aligned}
			&\left( 1 - \frac{(1-\kappa)\gamma \tau_2}{2\tau_1} \right)\inner{m(s), (|v(s)| + \varepsilon )^{-\gamma} \odot m(s)}= \inner{m(s), (|v(s)| + \varepsilon )^{-\gamma} \odot m(s)}\\
			\geq{}& \varepsilon \inner{m(s), (|v(s)| + \varepsilon )^{-\gamma-1} \odot m(s)} = \varepsilon \delta_{\gamma} \inner{m(s), (|v(s)| + \varepsilon )^{-\gamma-1} \odot m(s)}
		\end{aligned} 
	\end{equation*}
        On the other hand, when $\gamma >0$, it holds from the definition of $\delta_{\gamma}$ that 
        \begin{equation*}
		\begin{aligned}
			\frac{\varepsilon \gamma \tau_2}{2\tau_1}\inner{m(s), (|v(s)| + \varepsilon )^{-\gamma-1} \odot m(s)} =  \varepsilon \delta_{\gamma} \inner{m(s), (|v(s)| + \varepsilon )^{-\gamma-1} \odot m(s)}
		\end{aligned}.
	\end{equation*}
	Therefore, we can conclude that under Assumption \ref{Assumption_alg},  it holds for any $\gamma \geq 0$ that 
	\begin{equation}
		\label{Eq_Prop_Lyapunov_2}
		\begin{aligned}
			&\left( 1 - \frac{(1-\kappa)\gamma \tau_2}{2\tau_1} \right)\inner{m(s), (|v(s)| + \varepsilon )^{-\gamma} \odot m(s)} + \frac{\varepsilon \gamma \tau_2}{2\tau_1}\inner{m(s), (|v(s)| + \varepsilon )^{-\gamma-1} \odot m(s)}\\
			\geq{}& \varepsilon \delta_{\gamma}\inner{m(s), (|v(s)| + \varepsilon )^{-\gamma-1} \odot m(s)}. 
		\end{aligned}
	\end{equation}
	As a result, for any $s \geq 0$, we have 
	\begin{equation*}
		\begin{aligned}
			&\inf_{d_{\phi} \in \D_{\phi}(x(s), m(s), v(s))}   \inner{ (\dot{x}(s), \dot{m}(s), \dot{v}(s)) ,  d_{\phi}}\\
			\leq{}& -\alpha \inner{l_{f}(s), (|v(s)| + \varepsilon )^{-\gamma}\odot l_{f}(s)}  \\
			&  - \inner{m(s), (|v(s)| + \varepsilon )^{-\gamma} \odot m(s)} +\frac{\gamma \tau_2 }{2\tau_1}\inner{m(s)^2, (|v(s)|+\varepsilon)^{-\gamma-1} \odot|v(s)|}\\ 
			&- \frac{\gamma\tau_2 }{2\tau_1} \inner{m(s)^2  \odot (|v(s)|+\varepsilon)^{-\gamma-1},\widetilde{\mathrm{sign}}(v(s))  \odot l_v(s)}\\
			\leq{}&    - \inner{m(s), (|v(s)| + \varepsilon )^{-\gamma} \odot m(s)} +\frac{\gamma \tau_2 }{2\tau_1}\inner{m(s)^2, (|v(s)|+\varepsilon)^{-\gamma-1} \odot|v(s)|}\\ 
			&- \frac{\gamma\tau_2 }{2\tau_1} \inner{m(s)^2  \odot (|v(s)|+\varepsilon)^{-\gamma-1},\widetilde{\mathrm{sign}}(v(s))  \odot l_v(s)}\\
			\leq{}&-  \varepsilon\delta_{\gamma} \inner{m(s), (|v(s)| + \varepsilon )^{-\gamma-1} \odot m(s)}. 
		\end{aligned}
	\end{equation*}
	Here the last inequality directly follows \eqref{Eq_Prop_Lyapunov_0}, \eqref{Eq_Prop_Lyapunov_1}, and \eqref{Eq_Prop_Lyapunov_2}.

	Then for any $t > 0$, from Definition \ref{Defin_conservative_field_path_int},  we have 
	\begin{equation*}
		\begin{aligned}
			&\phi(x(t), m(t), v(t)) - \phi(x(0), m(0), v(0)) \\
			={}& \int_{0}^t \inf_{d_{\phi} \in \D_{\phi}(x(s), m(s), v(s))}   \inner{ (\dot{x}(s), \dot{m}(s), \dot{v}(s)) ,  d_{\phi}} \mathrm{d}s\\
			\leq{}&  -\int_{0}^t  \varepsilon\delta_{\gamma} \inner{m(s), (|v(s)| + \varepsilon )^{-\gamma-1} \odot m(s)} \mathrm{d} s \leq 0.
%			\leq{}&-\int_{0}^t \alpha \inner{l_{x}(s), (|v(s)| + \varepsilon )^{-\gamma} \odot l_{x}(s)} \mathrm{d}s \\
%			& -\int_{0}^t  \left( 1 - \frac{\gamma \tau_2}{2\tau_1} \right)\inner{m(s), (|v(s)| + \varepsilon )^{-\gamma} \odot m(s)} + \frac{\varepsilon \gamma \tau_2}{2\tau_1}\inner{m(s), (|v(s)| + \varepsilon )^{-\gamma-1} \odot m(s)} \mathrm{d}s\\
%%			\leq{}&-\int_{0}^t  \left( 1 - \frac{\gamma \tau_2}{2\tau_1} \right)\inner{m(s), (|v(s)| + \varepsilon )^{-\gamma} \odot m(s)} + \frac{\varepsilon \gamma \tau_2}{2\tau_1}\inner{m(s), (|v(s)| + \varepsilon )^{-\gamma-1} \odot m(s)} \mathrm{d}s. \\
		\end{aligned}
	\end{equation*}
	Therefore, for any $s_1>s_2 \geq  0$, we have
	\begin{equation}
		\phi(x(s_1), m(s_1), v(s_1)) \leq  \phi(x(s_2), m(s_2), v(s_2)),
	\end{equation}
	which illustrates that $\phi(x(t), m(t), v(t))$ is non-increasing for any $t > 0$. 
	
	Now we prove that $\phi$ is a Lyapunov function for $\ca{B}$. When $(x, m, v) \notin \ca{B}$, we first consider the cases where $m(0) \neq 0$. From the continuity of the path $(x(t), m(t), v(t))$, there exists $T>0$ such that $m(t) \neq 0$ for any $t \in [0, T]$. Therefore, we can conclude that for any $t>0$, 
	\begin{equation*}
		\begin{aligned}
			&\phi(x(t), m(t), v(t)) - \phi(x(0), m(0), v(0))  \\
			\leq{}& 
			- \int_{0}^t \varepsilon\delta_{\gamma}\inner{m(s), (|v(s)| + \varepsilon )^{-\gamma-1} \odot m(s)}
			\\
			\leq{}&  - \int_{0}^{\min\{t, T\}} \varepsilon\delta_{\gamma}\inner{m(s), (|v(s)| + \varepsilon )^{-\gamma-1} \odot m(s)} <  0.
		\end{aligned}
	\end{equation*}
	
	On the other hand, when $(x(0), m(0), v(0)) \notin \ca{B}$ with $m(0) = 0$, then it holds that $0\notin \D_f(x(0))$. Therefore, by the hyperplane separation theorem, there exists $w \in \Rn$ such that $\norm{w}=1$ and $\inf_{d \in \D_f(x(0))} \inner{d, w} >0$. Then from the outer-semicontinuity of $\D_f$ and the continuity of $m(t)$, there exists a constant $c >0$ and  time $T>0$ such that 
	\begin{equation*}
		\inf_{d \in \D_f(x(t))} \inner{d, w} >c, \quad \text{and} \quad \norm{m(t)} \leq \frac{c}{2}, \quad \forall t \in [0, T]. 
	\end{equation*}
	As a result, for any $t \in [0, T]$, we have 
	\begin{equation*}
		\begin{aligned}
			&\inner{m(t), w} = \int_{0}^t \inner{\dot{m}(s), w} \mathrm{d}s 
			= \int_{0}^t  \tau_1\inner{ m(s) - \D_f(x(s)), w}  \mathrm{d}s
			\\
			\leq{}& \int_{0}^t -\frac{\tau_1 c}{2} \mathrm{d} s = -\frac{\tau_1 c t}{2} < 0. 
		\end{aligned}
	\end{equation*}
	Then for any $t > 0$, it holds that
	\begin{equation*}
		\int_{0}^t \norm{m(s)}^2 \mathrm{d}s \geq \int_{0}^t \inner{m(s), w}^2 \mathrm{d} s > \int_{0}^t \left(\frac{\tau_1 c s}{2}\right)^2 \mathrm{d} s > 0. 
	\end{equation*}
	As a result, let $M:= \sup_{t \in [0, T]} \norm{|v(t)| + \varepsilon}$, we achieve 
	\begin{equation*}
		\begin{aligned}
			&\phi(x(t), m(t), v(t)) - \phi(x(0), m(0), v(0)) \\
			\leq{}& -\int_{0}^{\min\{t, T\}} \varepsilon \delta_{\gamma}\inner{m(s), (|v(s)| + \varepsilon )^{-\gamma-1} \odot m(s)}
			\\
			\leq{}& 
			-
			\frac{\varepsilon \delta_{\gamma}}{M^{\gamma+1}}
			\int_{0}^{\min\{t, T\}}  \norm{m(s)}^2 \mathrm{d}s \; < \; 0.
		\end{aligned}
	\end{equation*}
	Thus we can conclude that 
	\begin{equation*}
		\phi(x(t), m(t), v(t))  <\phi(x(0), m(0), v(0)),
	\end{equation*}
	holds whenever $(x, m, s) \notin \ca{B}$. Hence, $\phi$ is a Lyapunov function for the set $\ca{B}$ with respect to the differential inclusion \eqref{Eq_DI}. This completes the proof. 
\end{proof}

Proposition \ref{Prop_Lyapunov} requires the parameters in the framework \eqref{Eq_framework} to satisfy $(1-\kappa)\gamma\tau_2 \leq 2\tau_1$. When $\kappa$ is $0$, the Assumption \ref{Assumption_alg}(3) becomes $\widetilde{\mathrm{sign}}(v) \odot \ca{U}(x,m,v)\geq 0$ for any $x,m,v \in \Rn$. As demonstrated in Section \ref{Section_4}, this condition can be satisfied by numerous popular Adam-family methods, including Adam, AdaBelief, AMSGrad, NAdam, and Yogi. Moreover, it can be proven that the corresponding set-valued mapping $\ca{U}$ in AMSGrad satisfies Assumption \ref{Assumption_alg}(3) with $\kappa = 1$. Consequently, as discussed later in Corollary \ref{Coro_AMSGrad_Convergence}, AMSGrad converges with any combinations of $\tau_1, \tau_2 >0$ within the framework \eqref{Eq_framework}.

Based on Proposition \ref{Prop_Lyapunov} and \citep[Proposition 3.27]{benaim2005stochastic}, the following theorem illustrates the  global almost sure convergence properties for \eqref{Eq_framework}. 
\begin{theo}
	\label{The_convergence}
	For  any sequence $\{(\xk, \mk, \vk)\}$ generated by \eqref{Eq_framework}. Suppose Assumption \ref{Assumption_f} and Assumption \ref{Assumption_alg} hold, and the parameters in \eqref{Eq_framework} satisfy $(1-\kappa)\gamma\tau_2  \leq 2\tau_1$. Then almost surely,  any cluster point of $\{\xk\}$ lies in $\{x \in \Rn: 0\in \D_f(x)\}$, and the sequence $\{f(\xk)\}$ converges. 
\end{theo}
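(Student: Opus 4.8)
The plan is to view \eqref{Eq_framework} as a stochastic approximation of the differential inclusion \eqref{Eq_DI} driven by $\ca{G}$, to show via Lemma~\ref{Le_interpolated_process} and Proposition~\ref{Prop_UB_martingale_difference_sequence} that its interpolated process is a bounded perturbed solution, and then to close the argument with the Lyapunov function $\phi$ of Proposition~\ref{Prop_Lyapunov} and \cite[Proposition 3.27]{benaim2005stochastic}. The first thing I would check is that $\ca{G}$ obeys the regularity required to speak of solutions of \eqref{Eq_DI}: it is locally bounded, nonempty compact convex valued, and has closed graph. Nonemptiness, compactness and convexity descend from $\D_f$ and $\ca{U}$, since each of the three blocks of $\ca{G}(x,m,v)$ is the image of $\D_f(x)$ or $\ca{U}(x,m,v)$ under an affine map whose coefficients depend continuously on $(x,m,v)$ (here $\varepsilon > 0$ keeps $(|v|+\varepsilon)^{-\gamma}$ continuous for $\gamma \ge 0$), and a finite product of compact convex sets is compact and convex; closed graph and local boundedness follow by a routine argument combining Lemma~\ref{Le_closed_graph}, the fact that $\D_f$ and $\ca{U}$ are locally bounded with closed graph, and the stability of the closed-graph property under finite products.

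Next, writing $z_k:=(\xk,\mk,\vk)$, $d_k:=(d_{x,k},d_{m,k},d_{v,k})$ and $\xi_k:=(\xi_{x,k},\xi_{m,k},\xi_{v,k})$, I would rearrange \eqref{Eq_framework} into the single-timescale form $z_{k+1}=z_k-\eta_k\bigl(d_k+\tfrac{\theta_k}{\eta_k}\xi_k\bigr)$ and apply Lemma~\ref{Le_interpolated_process} with the mapping $\ca{G}$, the direction $d_k$, and the noise $\tfrac{\theta_k}{\eta_k}\xi_k$. Hypothesis~(1) of that lemma requires $\lim_{s\to\infty}\sup_{s\le i\le\Lambda(\lambda_s+T)}\norm{\sum_{k=s}^{i}\eta_k\cdot\tfrac{\theta_k}{\eta_k}\xi_k}=\lim_{s\to\infty}\sup_{s\le i\le\Lambda(\lambda_s+T)}\norm{\sum_{k=s}^{i}\theta_k\xi_k}=0$, which is exactly the conclusion of Proposition~\ref{Prop_UB_martingale_difference_sequence}, applicable because Assumption~\ref{Assumption_alg}(5)--(6) supply a uniformly bounded martingale difference sequence together with $\tfrac{\theta_k^2}{\eta_k}\log(k)\to 0$; hypothesis~(2) is Assumption~\ref{Assumption_alg}(4), and hypothesis~(3) is Assumption~\ref{Assumption_alg}(2). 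Hence, almost surely, the interpolated process $w$ of $\{z_k\}$ is a bounded perturbed solution of \eqref{Eq_DI}.

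Under the parameter condition $(1-\kappa)\gamma\tau_2\le 2\tau_1$, Proposition~\ref{Prop_Lyapunov} makes $\phi$ a Lyapunov function for $\ca{B}$ with respect to \eqref{Eq_DI}, and Lemma~\ref{Le_stable_set} together with Assumption~\ref{Assumption_f}(2) shows $\phi(\ca{B})=\{f(x):0\in\D_f(x)\}$ has empty interior in $\bb{R}$; so \cite[Proposition 3.27]{benaim2005stochastic} yields that the limit set $L(w)$ satisfies $L(w)\subseteq\ca{B}$ and $\phi\equiv c$ on $L(w)$ for a constant $c$, whence $\phi(w(t))\to c$. Since $\eta_k,\theta_k\to 0$ and $\{d_k\},\{\xi_k\}$ are bounded, $\norm{z_{k+1}-z_k}\to 0$, so $L(w)$ coincides with the set of cluster points of $\{z_k\}$; consequently every cluster point $(x^*,m^*,v^*)$ of $\{z_k\}$ lies in $\ca{B}$, i.e. $0\in\D_f(x^*)$ and $m^*=0$, and after passing to a convergent subsequence of $(\mk,\vk)$ this gives the asserted $\D_f$-stationarity of every cluster point of $\{\xk\}$. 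Finally, boundedness of $\{\mk\}$ and uniqueness of its cluster point give $\mk\to 0$, so the identity $f(\xk)=\phi(z_k)-\tfrac{1}{2\tau_1}\inner{\mk,(|\vk|+\varepsilon)^{-\gamma}\odot\mk}$, combined with $\phi(z_k)\to c$ and boundedness of $(|\vk|+\varepsilon)^{-\gamma}$, gives $f(\xk)\to c$, i.e. $\{f(\xk)\}$ converges.

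The crux of the argument is the single-timescale recasting in the second paragraph: the noise scale has to be absorbed into $\eta_k\cdot(\theta_k/\eta_k)\xi_k$ so that the partial sum that must be driven to zero is exactly $\sum\theta_k\xi_k$, which is precisely where Proposition~\ref{Prop_UB_martingale_difference_sequence} and the delicate stepsize requirement $\theta_k^2\log(k)/\eta_k\to 0$ enter. The remaining points — regularity of $\ca{G}$, identifying $L(w)$ with the cluster set of $\{z_k\}$, and matching the hypotheses of the cited theorems — are routine.
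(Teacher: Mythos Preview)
Your proposal is correct and follows essentially the same route as the paper: both reduce \eqref{Eq_framework} to a perturbed solution of \eqref{Eq_DI} via Proposition~\ref{Prop_UB_martingale_difference_sequence} and Lemma~\ref{Le_interpolated_process}, invoke the Lyapunov property of $\phi$ from Proposition~\ref{Prop_Lyapunov}, and close with \cite[Proposition~3.27]{benaim2005stochastic} together with Lemma~\ref{Le_stable_set}. Your write-up is in fact more explicit than the paper's in two places --- the verification that $\ca{G}$ has the regularity needed for \eqref{Eq_DI}, and the passage from convergence of $\{\phi(z_k)\}$ to convergence of $\{f(\xk)\}$ via $\mk\to 0$ --- but these are elaborations of the same argument rather than a different approach.
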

\begin{proof}
    Firstly, we aim to prove the uniform boundedness of the sequence $\{(d_{x,k}, d_{m,k}, d_{v,k})\}$.
     From Assumption \ref{Assumption_alg}(4), it is easy to verify that $\lim_{k\to +\infty} \mathrm{dist}(\mk - d_{m,k}, \tau_1\D_f^{\delta_k}(\xk)) = 0$. Moreover, Assumption \ref{Assumption_alg}(2) assumes that the sequence of iterates  $\{(\xk, \mk, \vk)\}$ is uniformly bounded almost surely. Then as the local boundedness of $\D_f$ and $\ca{U}$ implies the local boundedness of $\ca{G}$, we can conclude that the sequence $\{(d_{x,k}, d_{m,k}, d_{v,k})\}$ is uniformly bounded almost surely. In addition, as $\D_f$ and $\ca{U}$ are assumed to be convex-valued, it holds that the set-valued mapping $\ca{G}$ is also convex-valued. 
    
    Moreover, Proposition \ref{Prop_UB_martingale_difference_sequence} controls the summation of the noises in the framework \eqref{Eq_framework} (i.e., $\sum_{k = j}^i \theta_k(\xi_{x, k}, \xi_{m, k}, \xi_{v, k})$). From Proposition \ref{Prop_UB_martingale_difference_sequence} and Assumption \ref{Assumption_alg}(5),  for any $T > 0$, almost surely, we have that,
	\begin{equation*}
		\lim_{j \to +\infty} \sup_{j\leq i \leq \Lambda(\lambda_j + T)} \norm{\sum_{k = j}^i \theta_k(\xi_{x, k}, \xi_{m, k}, \xi_{v, k})  } = 0. 
	\end{equation*}
	Together with Assumption \ref{Assumption_alg}(2), we can conclude that for almost every $\omega \in \Omega$, the interpolation for $\{(\xk(\omega), \mk(\omega), \vk(\omega))\}$ is a perturbed solution for the differential inclusion \eqref{Eq_DI}. Then Proposition \ref{Prop_Lyapunov} illustrates that $\phi$ is a Lyapunov function for the set $\ca{B}$ with respect to the differential inclusion \eqref{Eq_DI}. 
	
	Let $M_{m,\omega} := \sup_{k\geq 0} \norm{\mk(\omega)} $, $M_{v,\omega} := \sup_{k\geq 0} \norm{\vk(\omega)}$, $M_{x,\omega} := \sup_{k\geq 0} \norm{\xk(\omega)}$ and ${\ca{C}}_{\omega}:= \bb{B}_{M_{x,\omega}}(0)  \times \bb{B}_{M_{m,\omega}}(0) \times \bb{B}_{M_{v,\omega}}(0)$, then the set $\ca{B} \cap {\ca{C}}_{\omega}$ is a compact set. Then for almost every $\omega \in \Omega$,  \citep[Proposition 3.27]{benaim2005stochastic} illustrates that any cluster point of $\{(\xk(\omega), \mk(\omega), \vk(\omega))\}$ lies in $\ca{B} \cap {\ca{C}}_{\omega}$, and $\{\phi(\xk(\omega), \mk(\omega), \vk(\omega))\}$ converges. From Lemma \ref{Le_stable_set}, we can conclude that for almost every $\omega \in \Omega$, any limit point of $\{\xk(\omega)\}$ lies in $\{x: 0\in \D_f(x)\}$, and $\{f(\xk(\omega))\}$ converges. Hence we complete the proof. 
\end{proof}

\subsection{Convergence to $\partial f$-stationary Points with Random Initialization}

In training nonsmooth neural networks, the conservative fields associated with AD algorithms may introduce infinitely many spurious stationary points \citep{bolte2020mathematical,bolte2021nonsmooth,bianchi2022convergence}. 
To address these issues, several existing works \citet{bolte2020mathematical,bianchi2022convergence} demonstrate that the SGD method can avoid the spurious stationary points introduced by conservative field almost surely. For the SGD method with diminishing stepsizes and mini-batch random sampling, \citet{bolte2020mathematical} prove that it can avoid these spurious stationary points almost surely.  Moreover, \cite{bianchi2022convergence} prove that with randomly chosen initial points and stepsizes, the vanilla SGD method with constant stepsizes converges to a neighborhood of the $\partial f$-stationary points of \ref{Prob_Ori}. Their results guarantee that the vanilla SGD method is able to yield meaningful stationary points in training nonsmooth neural works, regardless of the chosen conservative field.

In this subsection, to establish similar properties for Adam-family methods, we adopt the techniques from \citep{bolte2020mathematical,bianchi2022convergence} and extend their results to analyze the following abstract framework that employs diminishing stepsizes,  
\begin{equation}
	\label{Eq_abstract_AS_framework}
	\zkp -\zk \in - cs_k\ca{Q}_k(\zk, \omega_k),
\end{equation}
where $\zk \in \bb{R}^d$ refers to the iteration points,  $s_k \in \bb{R}$ refers to the stepsizes, $c \in \bb{R}$ is a scaling parameter for the stepsizes, $\Xi$ is a probability space and  $\{\omega_k\} \subset \Xi$ characterizes the stochasticity in \eqref{Eq_abstract_AS_framework}. Moreover, for any $k\geq 0$, $\ca{Q}_k:\bb{R}^d\times \Xi \rightrightarrows \bb{R}^d$ is a set-valued mapping. Furthermore, for almost every $\omega$, we assume the set-valued mapping $\ca{Q}_k(\cdot, \omega):\bb{R}^d \rightrightarrows \bb{R}^d$ is almost everywhere $\ca{C}^1$ for any $k\geq 0$. 

\begin{defin}
	\label{Defin_AS_C1}
    A measurable mapping $q: \bb{R}^d \to \bb{R}^d$ is almost everywhere $\ca{C}^1$ if for almost every $z \in \bb{R}^d$, $q$ is locally continuously differentiable in 
    a neighborhood of $z$. 
 
    Moreover, a set-valued mapping $\ca{Q} : \bb{R}^d \rightrightarrows \bb{R}^d$ is almost everywhere $\ca{C}^1$ if there exists an almost everywhere $\ca{C}^1$ mapping  $q:\bb{R}^d \to \bb{R}^d$ such that for almost every $z \in \bb{R}^d$, $\ca{Q}(z) = \{q(z)\}$. 
\end{defin}

For any $k\geq 0$, let $q_k:\bb{R}^d \times \Xi \to \bb{R}^d$ be the mapping that is locally continuously differentiable for almost every $z \in \bb{R}^d$ and $\ca{Q}_k(z, \omega) = \{q_k(z, \omega)\}$ 
holds for almost every $\omega \in \Xi$. Then we define the mapping  $T_{s,\omega, k}$ as
\begin{equation*}
	T_{s,\omega, k}(z) = z - s q_k(z, \omega).
\end{equation*}
The following proposition illustrates that for almost every $s\in \bb{R}$,  the mapping $T_{s,\omega, k}^{-1}$ maps zero-measure subsets into the zero-measure subsets over $\bb{R}^d$.

\begin{prop}
	\label{Prop_AS}
	Suppose $q_k(\cdot, \omega)$ is almost everywhere $\ca{C}^1$ for almost every $\omega \in \Xi$ and any $k\geq 0$. Then for almost every $\omega \in \Xi$, there exists a full-measure subset $S_k$ of $\bb{R}$ such that for any $s \in S_k$ and any zero-measure set $A \subset \bb{R}^d$, the subset
	\begin{equation}
		\{ z \in \bb{R}^d : T_{s, \omega, k}(z) \in A \}
	\end{equation}
	is zero-measure. 
\end{prop}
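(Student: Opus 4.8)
The plan is to prove that for almost every scalar $s$ the map $T_{s,\omega,k}(z)=z-s\,q_k(z,\omega)$ is, outside a Lebesgue-null set, a local diffeomorphism, and then to use that $\ca{C}^1$ diffeomorphisms pull back null sets to null sets. Fix $\omega$ in the full-measure subset of $\Xi$ on which $q_k(\cdot,\omega)$ is almost everywhere $\ca{C}^1$, and let $U_\omega\subseteq\bb{R}^d$ denote the open, full-measure set of points near which $q_k(\cdot,\omega)$ is continuously differentiable. Write $J_\omega(z):=D_z q_k(z,\omega)$, which is continuous on $U_\omega$.

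First I would identify the set of ``bad'' pairs $(z,s)$. Put $g(z,s):=\det\big(I_d-s\,J_\omega(z)\big)$ on $U_\omega\times\bb{R}$; this is continuous, so $B:=\{(z,s)\in U_\omega\times\bb{R}:g(z,s)=0\}$ is Borel in $\bb{R}^{d+1}$. For each fixed $z\in U_\omega$, the function $s\mapsto g(z,s)$ is a polynomial of degree at most $d$ with value $\det(I_d)=1\neq 0$ at $s=0$; hence it is not identically zero and has at most $d$ roots, so the vertical slice $B_z=\{s:g(z,s)=0\}$ is finite, in particular $\mu^1$-null. Applying the Fubini--Tonelli theorem to $\mathbf{1}_B$ gives $(\mu^d\times\mu^1)(B)=0$, and a second application shows that for $\mu^1$-almost every $s$ the horizontal slice $B^s:=\{z\in U_\omega:g(z,s)=0\}$ is $\mu^d$-null. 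Let $S_k$ be the set of such $s$; it is a full-measure subset of $\bb{R}$ (it may depend on $\omega$, which the statement permits).

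Next, fix $s\in S_k$ and a $\mu^d$-null set $A\subseteq\bb{R}^d$, and split $\bb{R}^d=U_\omega^c\ \sqcup\ B^s\ \sqcup\ (U_\omega\setminus B^s)$. The first piece is null because $q_k(\cdot,\omega)$ is almost everywhere $\ca{C}^1$, the second is null because $s\in S_k$, so $T_{s,\omega,k}^{-1}(A)$ meets each of them in a null set. On the set $V:=U_\omega\setminus B^s$, which is open because $g(\cdot,s)$ is continuous on $U_\omega$, the map $T_{s,\omega,k}$ is $\ca{C}^1$ with everywhere-invertible differential $I_d-s\,J_\omega(z)$, hence a local diffeomorphism by the inverse function theorem. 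Covering $V$ by countably many open balls $W_i$ on each of which $T_{s,\omega,k}$ is a diffeomorphism onto its image (possible since $\bb{R}^d$ is second countable), and using that a $\ca{C}^1$ diffeomorphism between open subsets of $\bb{R}^d$ sends $\mu^d$-null sets to $\mu^d$-null sets and conversely, each $T_{s,\omega,k}^{-1}(A)\cap W_i=(T_{s,\omega,k}|_{W_i})^{-1}\big(A\cap T_{s,\omega,k}(W_i)\big)$ is $\mu^d$-null; a countable union of null sets is null. Combining the three pieces yields $\mu^d\big(T_{s,\omega,k}^{-1}(A)\big)=0$, which is the assertion.

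The routine ingredients are the polynomial-root count and the inverse-function-theorem covering argument; the step carrying the actual content is the observation that ``for each $z$ only finitely many $s$ are bad'' converts, via Fubini on the product $U_\omega\times\bb{R}$, into ``for almost every $s$ only a $\mu^d$-null set of $z$ is bad.'' The only points needing care are the Borel measurability of $B$ (so that Fubini--Tonelli applies, guaranteed here by continuity of $g$) and the openness of $U_\omega\setminus B^s$ (so that the inverse function theorem is available on it); I do not expect either to be a genuine obstacle.
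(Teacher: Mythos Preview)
Your proof is correct and follows essentially the same route as the paper's: the polynomial-in-$s$ argument for $\det(I_d - sJ_\omega(z))$, the Fubini swap to turn ``finitely many bad $s$ for each $z$'' into ``null set of bad $z$ for almost every $s$,'' and the inverse-function-theorem plus second-countability covering to handle the preimage on the good set. If anything, your write-up is slightly more careful than the paper's in flagging the measurability of $B$ needed for Fubini and the openness of $U_\omega\setminus B^s$ needed for the local-diffeomorphism argument.
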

\begin{proof}
	From Definition \ref{Defin_AS_C1},  there exists a full-measure subset $\Gamma_{\phi} \subseteq \bb{R}^d$ such that for almost every $\omega \in \Omega$ and any $z \in \Gamma_{\phi}$, $q_k(\cdot, \omega)$ is continuously differentiable in a neighborhood of $z\in \Gamma_{\phi}$.  
	
	For any fixed $k\geq 0$, $\omega \in \Omega$, and $z \in \Gamma_{\phi}$, denote the Jacobian of $q_k$ with respect to $z$ as $J_{q_k}$. Then the Jacobian of $T_{s, \omega, k}$ can be expressed as 
	\begin{equation*}
		J_{T_{s, \omega, k}}(z) = I_n - sJ_{q_k}(z, \omega). 
	\end{equation*} 
	Notice that for any fixed $z \in \Gamma_{\phi}$, $\mathrm{det}(J_{T_{s, \omega, k}}(z))$ is a non-trivial $d$-th order polynomial of $s$ in $\bb{R}$, hence its roots are zero-measure in $\bb{R}$. Therefore, we can conclude that for any $\omega \in \Omega$ and any $z \in \Gamma_{\phi}$, 
	\begin{equation*}
		\{ s \in \bb{R}: \mathrm{det}(J_{T_{s, \omega, k}}(z)) = 0\}
	\end{equation*}
	is zero-measure in $\bb{R}$. From Fubini's theorem, we can conclude that for almost every given $\omega \in \Omega$, there exists a full-measure subset $S_k$ of $\bb{R}$ such that for any $s \in S_k$, 
	\begin{equation}
		\label{Eq_Prop_AS_0}
		\Gamma_{\phi, s, \omega, k} := \{z \in \Gamma_{\phi}: \mathrm{det}(J_{T_{s, \omega, k}}(z)) \neq 0\}
	\end{equation} is full-measure in $\bb{R}^d$.

    By inverse function theorem, for any $k\geq 0$, almost every given $\omega \in \Omega$, $s \in S_k$ and  $z \in \Gamma_{\phi, s, \omega, k}$, the mapping $T_{s, \omega, k}$ is a local diffeomorphism  in a neighborhood of $z$. Let $\tilde{V}_{z}:= \bb{B}_{z}(\tilde{\delta}_z)$, where $\tilde{\delta}_z >0$ and $T_{s, \omega, k}$ is a local diffeomorphism in $\ca{B}_{z}(\tilde{\delta}_z)$ (i.e., $T_{s, \omega, k}$ is continuously differentiable and has non-singular Jacobian over $\ca{B}_{z}(\tilde{\delta}_z)$). Therefore, $V_z := T_{s, \omega, k}(\ca{B}_{z}(\tilde{\delta}_z))$ is an open set for any $z\in \Gamma_{\phi, s, \omega, k}$. 
    
    Notice that $\{V_z\}_{z \in \Gamma_{\phi, s, \omega, k}}$ is an open cover for $\Gamma_{\phi, s, \omega, k}$. 
	Based on Lindelof's lemma \citep{kelley2017general} and the fact that $\bb{R}^d$ is a second-countable space, there exists $\{z_i\}_{i\in \bb{N}_+} \subset \Gamma_{\phi, s, \omega, k}$ such that $\Gamma_{\phi, s, \omega, k}  \subseteq \bigcup_{i \in \bb{N}_+} V_{z_i}  \subseteq \bigcup_{i \in \bb{N}_+} \tilde{V}_{z_i}$. Given any zero-measure set $A \subset \bb{R}^d$, for any $i \in \bb{N}_+$, since $T_{s, \omega, k}$ is a local diffeomorphism in $\tilde{V}_{z_i}$, we can conclude that the set $\{ z \in \tilde{V}_{z_i}: T_{s, \omega, k}(z) \in A \}$ 
	is zero-measure. Then the set 
	\begin{equation*}
		\bigcup_{i\in \bb{N}_+} \{ z \in \tilde{V}_{z_i}: T_{s, \omega, k}(z) \in A \}
	\end{equation*}
	is zero-measure, hence the set $\{ z \in \Gamma_{\phi, s, \omega, k}: T_{s, \omega, k}(z) \in A  \}$ is zero-measure. Combined with the fact that $\Gamma_{\phi, s, k}$ is a full-measure subset of $\bb{R}^d$, we can conclude that  the set $\{ z \in \bb{R}^d : T_{s, \omega, k}(z) \in A  \}$ satisfies 
	\begin{equation*}
		\{ z \in \bb{R}^d : T_{s, \omega, k}(z) \in A  \} \subset
		\Gamma_{\phi, s, \omega, k}^c \cup  \{ z \in \Gamma_{\phi, s, \omega, k}: T_{s, \omega, k}(z) \in A  \}.
	\end{equation*}
	 Then from the definition of $\Gamma_{\phi, s, \omega, k}$ in \eqref{Eq_Prop_AS_0}, for almost every $\omega \in \Xi$,  for any $s \in S_k$, the set $\Gamma_{\phi, s, \omega, k}^c$ is zero-measure. 
	This completes the proof. 
\end{proof}

Based on Proposition \ref{Prop_AS}, the following proposition illustrates that for almost every prefixed $\omega \in \Xi$ and $s \in \bb{R}_+$, the mapping $T_{s,\omega, 0}^{-1} \circ T_{s,\omega, 1}^{-1} \circ \cdots T_{s,\omega, k}^{-1}$ maps zero-measure subsets of $\bb{R}^d$ into zero-measure subsets for all $k\geq 0$. 	
\begin{prop}
	\label{Prop_AS_multistep}
	Suppose $\ca{Q}_k(\cdot, \omega)$ is almost everywhere $\ca{C}^1$ for almost every $\omega \in \Xi$ and any $k\geq 0$. Then for any given zero-measure set $A \subset \bb{R}^d$, for almost every $\omega \in \Xi$, there exists a full-measure subset $S_{init, \omega}$ of $\bb{R}^d \times \bb{R}$ such that for any sequence $\{z_k\}$ that employ the following update scheme
	\begin{equation}
		\label{Eq_Prop_AS_multistep_0}
		z_{k+1} = z_k - s \ca{Q}_k(z_k, \omega),
	\end{equation}
	with $(z_0, s) \in S_{init, \omega}$, 
	we have that 
	\begin{equation*}
		\{z_k: k\geq 0\} \bigcap A = \emptyset. 
	\end{equation*}
\end{prop}
\begin{proof}
	According to Proposition \ref{Prop_AS}, given any zero-measure subset $A \subset \bb{R}^d$, for any $k\geq 0$ and $\omega \in \Xi$, there exists a full-measure subset $S_{\omega,k} \subseteq \bb{R}_+$ such that for any $s \in S_{\omega,k}$ and any zero-measure subset $A$ of $\bb{R}^d$, the set $\{z \in \bb{R}^d: z - s\ca{Q}_k(z, \omega) \cap A \neq \emptyset  \}$ is a zero-measure subset of $\bb{R}^d$.
	
	Then let $S_{\omega} := \cup_{k\geq 0} \ca{S}_{\omega,k}$, it is easy to verify that for any $s \in S_{\omega}$, the set $\{z \in \bb{R}^d: z - s\ca{Q}_i(z) \cap A \neq \emptyset \text{ for some $i \geq 0$} \}$ is a zero-measure subset of $\bb{R}^d$. As a result, let $\tilde{Y}_{0, \omega, s} = A$, and recursively define
	\begin{equation*}
		\tilde{Y}_{k+1, \omega, s} = \{z \in \bb{R}^d: (z - s\ca{Q}_i(z, \omega)) \cap (\tilde{Y}_{k, \omega, s} \cup A) \neq \emptyset \text{ for some $i \geq 0$} \}.
	\end{equation*}
	Then Proposition \ref{Prop_AS} illustrates that $\tilde{Y}_{k, \omega, s}$ is a zero-measure subset of $\bb{R}^d$ for any $k\geq 0$.
	Moreover, from the definition of $\tilde{Y}_{k,\omega, s}$, we can conclude that for any $ j\geq 0$, any $s \in S_{\omega}$,  and for any sequence $\{z_k\}$ that follows equation \eqref{Eq_Prop_AS_multistep_0} with an initial condition $z_0 \notin \tilde{Y}_{j, \omega, s}$, it holds true that $\{z_k: k\leq j\} \cap A = \emptyset$.

	 Let $Y_{\omega, s} = (\cup_{k\geq 0} \tilde{Y}_{k,\omega, s})^c$, then  for any $k\geq 0$, any $s \in S_{\omega}$, and any $z_0 \in Y_{\omega, s}$, we have that $Y_{\omega, s}$ is a zero-measure subset of $\bb{R}^d$ and the sequence $\{z_k\} \cap  (\tilde{Y}_{k, \omega, s} \cup A) = \emptyset$. Then from Fubini's theorem, the subset $\{(z_0, s): s \in S_{\omega}^c \text{ or } z_0 \in Y_{\omega, s}^c\}$ is a zero-measure subset of $\bb{R}^d \times \bb{R}_+$. As a result,  $S_{init, \omega}=\{(z_0, s): s \in S_{\omega}, z_0 \in Y_{\omega, s}\}$ is a full-measure subset of $\bb{R}^d \times \bb{R}_+$. Moreover, from the choices of $S_{\omega}$ and $Y_{\omega, s}$,  for any $(z_0, s) \in S_{init, \omega}$, the sequence $\{z_k\}$ that follows the scheme in \eqref{Eq_Prop_AS_multistep_0} satisfies $\{z_k\} \cap A = \emptyset$. This completes the proof. 
\end{proof}

Next, we present the following theorem to illustrate that under mild assumptions, with random initialization for the initial point $z_0$ and the scaling parameter $c$,  the sequence generated by \eqref{Eq_abstract_AS_framework} can avoid any zero-measure subset $A$ of $\bb{R}^d$. As shown later in Section \ref{Section_4} and Section \ref{Section_5}, Theorem \ref{The_convergence_AS} directly implies the almost sure convergence to $\partial f$-stationary points of \ref{Prob_Ori} for the analyzed stochastic subgradient methods.

\begin{theo}
	\label{The_convergence_AS}
	Suppose $\ca{Q}_k(\cdot, \omega)$ is almost everywhere $\ca{C}^1$ for almost every $\omega \in \Xi$ and any $k\geq 0$. Then for any zero-measure subset $A \subset \bb{R}^d$, there exists a full-measure subset $S_{init} \subseteq \bb{R}^d \times \bb{R}$ and $S_{\omega} \subseteq \Omega$, such that for any $(z_0, c) \in S_{init}$, almost surely in $\Xi$, it holds that the sequence $\{\zk\}$ generated by \eqref{Eq_abstract_AS_framework} satisfies $\{\zk\} \subset A^c$. 
\end{theo}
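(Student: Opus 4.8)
The plan is to turn the event ``$\zk \in A$'' into a statement about preimages of zero-measure sets under the composition of the single-step maps, and then apply Proposition \ref{Prop_AS} along the iteration to keep every such preimage zero-measure. Fix the zero-measure set $A \subset \bb{R}^d$. First I would isolate a full-measure set of noise realizations: combining Definition \ref{Defin_AS_C1} with Proposition \ref{Prop_AS}, for each $k$ there is a full-measure set of $\omega_k$ for which (i) there is a zero-measure set $N_k(\omega_k) \subset \bb{R}^d$ outside of which $\ca{Q}_k(\cdot,\omega_k)$ is the singleton $\{q_k(\cdot,\omega_k)\}$ with $q_k(\cdot,\omega_k)$ of class $\ca{C}^1$, and (ii) there is a full-measure set $S_k(\omega_k) \subset \bb{R}$ such that for \emph{every} $s \in S_k(\omega_k)$ and \emph{every} zero-measure $B \subset \bb{R}^d$ the set $T_{s,\omega_k,k}^{-1}(B)$ is zero-measure, with $S_k(\omega_k)$ \emph{independent} of $B$. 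Let $S_\omega$ be the set of $\omega$ for which every $\omega_k$ is good in this sense; it is full-measure, being the complement of a countable union of null events.

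Next I would fix $\omega \in S_\omega$ together with a scaling $c$ such that $c s_k \in S_k(\omega_k)$ for all $k$ (if $s_k = 0$ the $k$-th step is the identity and is harmless; otherwise $\{c : c s_k \in S_k(\omega_k)\}$ is full-measure, so almost every $c$ works simultaneously for all $k$). For such $(\omega,c)$ put $\hat A_k := A \cup N_k(\omega_k)$, which is zero-measure, and define $\Phi_0 := \mathrm{id}$ and $\Phi_{k} := T_{cs_{k-1},\omega_{k-1},k-1} \circ \Phi_{k-1}$. I claim the set $G(\omega,c)$ of initial points $z_0$ whose trajectory from \eqref{Eq_abstract_AS_framework} ever meets $\bigcup_k \hat A_k$ is zero-measure. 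Indeed, $\{z_0 \in \hat A_0\}$ is zero-measure; and inductively, if $z_j \notin \hat A_j$ for all $j \le k$ then in particular $z_j \notin N_j(\omega_j)$, so $\ca{Q}_j(z_j,\omega_j)$ is a singleton and the trajectory is forced, giving $z_{k+1} = \Phi_{k+1}(z_0)$. Hence $z_{k+1} \in \hat A_{k+1}$ only if $z_0 \in \Phi_{k+1}^{-1}(\hat A_{k+1}) = T_{cs_0,\omega_0,0}^{-1} \circ \cdots \circ T_{cs_k,\omega_k,k}^{-1}(\hat A_{k+1})$, which is zero-measure by $k+1$ successive applications of Proposition \ref{Prop_AS} (legitimate since $c s_j \in S_j(\omega_j)$ for each $j$ and each step sends a zero-measure set to a zero-measure set). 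Thus $G(\omega,c) \subseteq \bigcup_{k} \Phi_k^{-1}(\hat A_k)$ is zero-measure, and every trajectory started outside $G(\omega,c)$ stays in $\big(\bigcup_k \hat A_k\big)^c \subseteq A^c$.

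Finally I would untangle the three-fold genericity in $z_0$, $c$ and $\omega$ by Fubini's theorem. For $\omega \in S_\omega$ let $\ca{E}(\omega) := \{(z_0,c) : c s_k \notin S_k(\omega_k) \text{ for some } k,\ \text{or}\ z_0 \in G(\omega,c)\}$; the first alternative is a countable union of sets $\bb{R}^d \times (\text{null})$ and hence null, and by the previous paragraph and Fubini so is the second, so $\ca{E}(\omega)$ is a null subset of $\bb{R}^d \times \bb{R}$ for every $\omega \in S_\omega$. Since $\Omega \setminus S_\omega$ is null and each $\omega$-slice $\ca{E}(\omega)$ is null, Fubini applied to $\{(\omega,z_0,c) : \omega \notin S_\omega \text{ or } (z_0,c) \in \ca{E}(\omega)\}$ shows its $(z_0,c)$-slices are null for a.e.\ $(z_0,c)$; let $S_{init}$ be this full-measure set of pairs. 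Then for $(z_0,c) \in S_{init}$, almost surely $\omega \in S_\omega$, $c s_k \in S_k(\omega_k)$ for all $k$, and $z_0 \notin G(\omega,c)$, so the whole sequence $\{\zk\}$ lies in $\big(\bigcup_k \hat A_k\big)^c \subseteq A^c$. I expect the main obstacle to be bookkeeping rather than any single estimate: one must enlarge the avoided set by the ill-defined region $N_k(\omega_k)$ so that the backward recursion is even meaningful, exploit that the generic-stepsize set $S_k(\omega_k)$ in Proposition \ref{Prop_AS} is uniform over target sets so that a single scaling $c$ controls all iterations at once, and invoke Fubini (which tacitly requires joint measurability of the trajectory maps, assumed throughout) to pass from ``for a.e.\ $\omega$, a.e.\ $(z_0,c)$'' to the asserted ``for a.e.\ $(z_0,c)$, a.e.\ $\omega$''.
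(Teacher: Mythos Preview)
Your proposal is correct and follows essentially the same route as the paper: apply Proposition~\ref{Prop_AS} to obtain, for each $k$ and a.e.\ $\omega_k$, a full-measure set of stepsizes on which $T_{s,\omega_k,k}^{-1}$ preserves null sets, intersect over $k$, pull $A$ back along the composed iteration maps, and finish with Fubini to exchange the ``for a.e.\ $\omega$, a.e.\ $(z_0,c)$'' quantifiers. If anything, your argument is tighter than the paper's written version---you explicitly enlarge $A$ by the singular sets $N_k(\omega_k)$ so that the backward recursion through $\Phi_k$ is well-posed, and you spell out why the uniformity of $S_k(\omega_k)$ over target sets lets a single scaling $c$ serve for all iterates.
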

\begin{proof}
	As illustrated in Proposition \ref{Prop_AS_multistep}, for almost every $\omega \in \Xi$, there exists a full-measure subset $S_{init, \omega}$ of $\bb{R}^d \times \bb{R}_+$ such that for any $(z_0, c) \in S_{init, \omega}$, almost surely in $\Xi$, it holds that the sequence $\{\zk\}$ generated by \eqref{Eq_abstract_AS_framework} satisfies $\{\zk\} \subset A^c$. 
	
	Notice that for almost every $\omega \in \Xi$, the set $S_{init,\omega}$ is a full-measure subset of $\bb{R}^d \times \bb{R}_+$. Then by applying Fubini's theorem, the set $T = \{(\omega, z_0, s): \omega \in \Xi, ~ (z_0,s) \in S_{init,\omega}\}$ is a full-measure subset of $\Xi \times \bb{R}^d \times \bb{R}_+$. Then by applying Fubini's theorem again, we can conclude that there exists a full-measure subset $S_{init}$ of $\bb{R}^d$ such that for any $(z_0, s) \in S_{init}$, the subset $\{\omega: (\omega, z_0, s) \in T\}$ is a full-measure subset of $\Xi$. This completes the proof.  
\end{proof}

\section{Applications: Convergence Guarantees for Adam-family Methods}
\label{Section_4}

In this section, we establish the convergence properties of ADAM, AMSGrad, Yogi and AdaBelief for solving \ref{Prob_Ori} based on our proposed framework when the objective function $f$ takes the following form, 
\begin{equation}
	f(x) := \bb{E}_{s \sim P} [f_{s}(x)].
\end{equation}
Here $(\Theta, \ca{F}, P)$ is a probability space, where $\Theta$ refers to the sample space, $\ca{F}$ is the corresponding $\sigma$-algebra, and $P$ is the probability distribution. 
Throughout this section, we make the following assumptions on $f$. 
\begin{assumpt}
	\label{Assumption_Implementation_f}
            There exists a measurable set-valued mapping $\D: \Rn \times \Theta \to \Rn$ such that 
            \begin{enumerate}
                \item The mapping $ (x, s) \mapsto f_s(x)$ is measurable over $\Rn \times \Theta$;
                \item For almost every $s \in \Rn$, $x \mapsto \D(x, s)$ is a definable conservative field that admits $f_s$ as its potential function. Moreover, there exists a measurable mapping $\chi: \Rn \times \Theta \to \Rn$ such that $\chi(x, s) \in \D(x, s)$ for all $x \in \Rn$ and almost every $s \in \Theta$. 
                \item  There exists an integrable function $p_{\Theta}:\Theta \to \bb{R}_+$ such that for all $x \in \Rn $ and $s \in \Theta$, it holds that
                \begin{equation*}
                    \sup_{d \in \D(x, s)}  \norm{d} \leq p_{\Theta}(s). 
                \end{equation*}
                \item The set $\{ f(x): 0 \in \conv\left( \bb{E}_{s \sim P}[\D(x, s)] \right) \}$ has empty interior in $\bb{R}$.
          \end{enumerate}
		
\end{assumpt} 

Based on the results from \citep[Theorem 3.10]{bolte2022subgradient}, the integral of $\D$ with respect to the measure $P$ (i.e., $\bb{E}_{s \sim P}[\D(x, s)]$ as defined in Definition \ref{Defin_Aumann_integral}) is a conservative field that admits $f$ as its potential function. As a result, in this section, we choose the conservative field $\D_f$ in the framework \eqref{Eq_framework} as 
\begin{equation*}
    \D_f(x) = \conv\left(\bb{E}_{s \sim P}[\D(x, s)] \right). 
\end{equation*}
Moreover, the mapping $\chi$ defined in Assumption \ref{Assumption_Implementation_f}(2) is called a (measurable) selection of the set-valued mapping $\D$. From Definition \ref{Defin_Aumann_integral}, it is easy to verify that $\bb{E}_{s \sim P} [\chi(x, s)] \in \D_f(x)$ holds for all $x \in \Rn$.

\begin{rmk}
	\label{Rmk_Application_finite_sum}
	It is worth mentioning that Assumption \ref{Assumption_Implementation_f}  is mild in practice.  For the loss function of any neural network that is built from definable blocks,  \citet{bolte2021conservative} show that the results returned by the AD algorithms are contained within a definable conservative field. This illustrates that Assumption \ref{Assumption_Implementation_f}(1)-(3) are easy to be satisfied in practice. 

    Moreover, \citet[Theorem 5]{bolte2021conservative} illustrates that the set $\{f(x): 0\in \D_{f}(x)\}$ is finite whenever both $f$ and  $\D_f$ are definable. As discussed in \citep{bolte2021conservative}, when the set $\Theta$ is finite, the $f$ can be expressed in a finite-sum formulation. Under such settings, both $f$ and $\D_f$ are definable whenever both $f_s$ and $\D(\cdot, s)$ are definable for any $s \in \Theta$. On the other hand, for the cases where $\Theta$ contains infinitely many elements,    \citet[Theorem 4.8]{bolte2022subgradient} guarantees the definability of $f$ and $\D_f$ under appropriate conditions. In particular, \citet{bolte2022subgradient} shows that when we assume $\Theta = \bb{R}^q$ for some $q >0$, the probability measure $P$ has a semi-algebraic density function, and $\D$ is assumed to be convex-valued and semi-algebraic, then $\D_f$ is semi-algebraic. These results illustrate that Assumption \ref{Assumption_Implementation_f}(4) is also mild in practice. 
	
	% Moreover, as illustrated in \citet{castera2021inertial}, when $f(x) := \frac{1}{N} \sum_{i = 1}^N f_i(x)$, the conservative field for $f$ can be chosen as $\D_f(x) = \frac{1}{N}\sum_{i = 1}^N \D_{f_i}(x)$. Moreover, as $\{\D_{f_i}(x)\}$ is uniformly bounded for any $x \in \Rn$, the sequence of evaluation noises is uniformly bounded  \citep{bolte2021conservative,castera2021inertial}. Furthermore, as $\D_{f_i}$ is definable for any $i\in [N]$, then $\D_f$ is definable and \citep[Theorem 5]{bolte2021conservative} illustrates that the set $\{f(x): 0\in \D_{f}(x)\}$ is finite. 
\end{rmk}

Inspired by the pioneering works \citep{barakat2021convergence,barakat2021stochastic,gadat2022asymptotic}, we consider a class of Adam-family methods with diminishing stepsizes for minimizing $f$ over $\Rn$.  The detailed algorithm is presented in Algorithm \ref{Alg:ADAM}.  In step 6 of Algorithm \ref{Alg:ADAM}, different Adam-family methods employ different schemes for updating the estimator $\{\vk\}$, which is characterized by a specific mapping $R_{\ca{U}}: \Rn \times \Rn \times \Rn \to \Rn$. Table \ref{Table_Adaptive_algs} summarizes the updating rules for Adam, AdaBelief, AMSGrad, NAdam and Yogi, their corresponding set-valued mappings $\ca{U}$ in the framework \eqref{Eq_framework}, and the settings for the parameters $\alpha$ and $\kappa$. 
\begin{algorithm}[htbp]
	\begin{algorithmic}[1]  
		\Require Initial point $x_0 \in \Rn$, $m_0 \in \Rn$ and $v_0 \in \Rn_+$, parameters $\alpha \geq 0$, and $\tau_1, \tau_2, \varepsilon > 0$, and $\chi$ as a selection  of stochastic subgradients;
		\State Set $k=0$;
		\While{not terminated}
		\State Independently sample $s_k \sim P$, and compute $g_k = \chi(\xk, s_k)$;
		\State Choose the stepsize $\eta_k$;
		\State Update the momentum term by $m_{k+1} = (1-\tau_1  \eta_k) m_k + \tau_1   \eta_k g_k$;
		\State Update the estimator $\vkp$ from $g_k$, $\mkp$ and $\vk$ by 
            \begin{equation*}
                \vkp = \vk - \tau_2\eta_k R_{\ca{U}}(g_k, \mkp, \vk). 
            \end{equation*}
		\State Compute the scaling parameters  $\rho_{m,k+1}$ and $\rho_{v,k+1}$;
		\State Update $\xk$ by 
		\begin{equation*}
			\xkp = \xk - \eta_k (\rho_{v,k+1}|v_{k+1}| + \varepsilon )^{-\frac{1}{2}} \odot ( \rho_{m,k+1} m_{k+1} +  \alpha g_k );
		\end{equation*}
		\State $k = k+1$;
		\EndWhile
		\State Return $\xk$.
	\end{algorithmic}  
	\caption{Stochastic subgradient-based Adam for nonsmooth optimization problems.}  
	\label{Alg:ADAM}
\end{algorithm}

To establish the convergence properties for Algorithm \ref{Alg:ADAM}, we make some mild assumptions  as follows. 
\begin{assumpt}
	\label{Assumption_Implementation}
	\begin{enumerate}
		\item The sequence $\{\xk\}$ is almost surely bounded. That is, 
		\begin{equation*}
			\begin{aligned}
				\sup_{k\geq 0} \norm{\xk}  <+\infty
			\end{aligned}
		\end{equation*}
		holds almost surely. 
		\item The sequence of stepsizes $\{\eta_k\}$ is positive and satisfies 
		\begin{equation*}
			\sum_{k = 0}^{+\infty} \eta_k = +\infty, \quad \lim_{k \to +\infty} \eta_k \log(k) = 0. 
		\end{equation*} 
		\item The scaling parameters $\{\rho_{m,k}\}$ and $\{\rho_{v,k}\}$ satisfy 
		\begin{equation*}
			\lim_{k \to +\infty} \rho_{m,k} = 1, \quad \lim_{k \to +\infty} \rho_{v,k} = 1.
		\end{equation*}
            \item There exists a constant $M_{\Theta}>0$ such that $p_{\Theta}(s) \leq M_{\Theta}$ holds for almost every $s \in \Theta$. Here $p_{\Theta}(s)$ is the auxiliary function  defined in Assumption \ref{Assumption_Implementation_f}(3). 
	\end{enumerate}
\end{assumpt}

\begin{table}[tb]
	\centering
	\tiny
	\begin{tabular}{c|c|c|c}
		\hline
		Adam-family methods & Expression of $R_{\ca{U}}$ for updating $\{\vk\}$ & Corresponding $\ca{U}$ in \eqref{Eq_framework} & $\alpha$ and $\kappa$ \\ \hline
		Adam \citep{kingma2014adam}& $R_{\ca{U}}(g,m,v) = v - g^2$ & $ \mathrm{sign}(v) \odot \ca{S}(x)$ & $\alpha = 0, \kappa = 0$\\ \hline
		AdaBelief \citep{zhuang2020adabelief} & $R_{\ca{U}}(g,m,v) = v - (g-m)^2$ & $ \mathrm{sign}(v) \odot \tilde{\ca{S}}(x, m, v)$ & $\alpha = 0, \kappa = 0$\\ \hline
		AMSGrad \citep{reddi2019convergence}& $R_{\ca{U}}(g,m,v) = v - \max\{v, g^2\}$ & $\mathrm{sign}(v) \odot \left( \bb{E}_{s\sim P} [\max\{|v|, \ca{S}(x, s)\}]\right)$ & $\alpha = 0, \kappa = 1$\\ \hline
		NAdam \citep{dozat2016incorporating}& $R_{\ca{U}}(g,m,v) = v - g^2$ & $ \mathrm{sign}(v) \odot \ca{S}(x)$ & $\alpha > 0, \kappa =0$\\ \hline
		Yogi \citep{zaheer2018adaptive}& $R_{\ca{U}}(g,m,v) = v - \mathrm{sign}(v - g^2) \odot g^2$ & $\mathrm{sign}(v) \odot 
		\left( \bb{E}_{s\sim P} [ \{ |v| - \mathrm{sign}(|v| - d)\odot d: d \in \ca{S}(x, s) \} ] \right)$ & $\alpha = 0, \kappa = 0$\\\hline
	\end{tabular}
	
	\caption{Different updating schemes for $\{\vk\}$ in Step 6 of Algorithm \ref{Alg:ADAM} that describe Adam, AdaBelief, AMSGrad, NAdam and Yogi. Here $\ca{S}(x, s) := \conv(\left\{ d\odot d: d\in \D(x, s) \right\})$, $\ca{S}(x) := \bb{E}_{s \sim P}[ \ca{S}(x, s)]$, and  $\tilde{\ca{S}}(x, m, v):= \bb{E}_{s \sim P}[\{ (d-m)^2: d \in \D(x, s) \}]$.}
	\label{Table_Adaptive_algs}
\end{table}

\begin{rmk}
    For the set-valued mapping $\ca{S}$ in Table \ref{Table_Adaptive_algs}, it is worth mentioning that under Assumption \ref{Assumption_Implementation}(4), for any $x \in \Rn$ and almost every $s \in \Theta$, we have $\sup_{d \in \ca{S}(x, s)} \norm{d} \leq (p_{\Theta}(s)  )^2 $, and $p_{\Theta}(x)^2$ is integrable in $\Theta$. Therefore, based on \citep[Theorem 2]{shapiro2007uniform}, we can conclude that $\bb{E}_{s \sim P}[ \ca{S}(x, s)]$ has closed graph and is compact valued. Hence the set-valued mapping $\ca{U}$ corresponding to Adam has closed graph and is compact valued. Similarly, based on \citep[Theorem 2]{shapiro2007uniform} and Assumption \ref{Assumption_Implementation}(4), it is easy to verify that all the set-valued mappings $\ca{U}$ listed in Table \ref{Table_Adaptive_algs} has closed graph and is compact valued, and $\bb{E}_{s\sim P} [R_{\ca{U}}(\D(x, s),m, v)] \in v - \ca{U}(x,m, v )$ holds for any $(x,m,v) \in \Rn \times \Rn \times \Rn_+$.  
\end{rmk}

The following lemma illustrates that the sequence $\{(\xk, \mk, \vk)\}$ is uniformly bounded under Assumption \ref{Assumption_Implementation}. 
\begin{lem}
	\label{Le_UB_ADAM}
	For any sequence $\{\xk\}$ generated by Algorithm \ref{Alg:ADAM}. Suppose Assumption \ref{Assumption_Implementation_f} and Assumption \ref{Assumption_Implementation} hold, and the sequence $\{\vk\}$ follows the schemes in Table \ref{Table_Adaptive_algs}. Then almost surely,  it holds that 
	\begin{equation*}
		\begin{aligned}
			\sup_{k\geq 0} \norm{\xk} +\norm{\mk} +\norm{\vk}  <+\infty.
		\end{aligned}
	\end{equation*}
\end{lem}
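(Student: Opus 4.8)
The plan is to establish the three bounds separately and in sequence: first the stochastic subgradients $\{g_k\}$, then the momenta $\{m_k\}$ (which are driven by the $g_k$), and finally the estimators $\{v_k\}$ (which are driven by the $g_k$ and the $m_k$). The engine behind all three is elementary --- local boundedness of conservative fields together with the fact that $\eta_k \to 0$.

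First I would invoke Assumption \ref{Assumption_Implementation}(1) to fix, almost surely, a compact set $K \subset \Rn$ that contains the whole trajectory $\{\xk\}$. Each $\D_{f_i}$ is a conservative field and hence locally bounded, so covering $K$ by finitely many of the neighbourhoods on which the $\D_{f_i}$ are bounded produces a constant $M_g$ with $\norm{g} \le M_g$ for every $g \in \D_{f_i}(x)$, $x \in K$, $i \in [N]$; since $g_k \in \D_{f_{i_k}}(\xk)$ this gives $\sup_k \norm{g_k} \le M_g$ almost surely. Next, by Assumption \ref{Assumption_Implementation}(2) we have $\eta_k \log(k) \to 0$, which forces $\eta_k \to 0$, so there is an index $K_0$ with $\tau_1 \eta_k < 1$ and $\tau_2 \eta_k < 1$ for all $k \ge K_0$. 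For such $k$ the update $m_{k+1} = (1-\tau_1\eta_k)m_k + \tau_1\eta_k g_k$ is a convex combination of $m_k$ and $g_k$, hence $\norm{m_{k+1}} \le \max\{\norm{m_k}, M_g\}$, and an induction (the finitely many earlier $m_k$ being harmless) yields $M_m := \sup_k \norm{m_k} < +\infty$ almost surely.

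It then remains to bound $\{v_k\}$, which I would do by running through the update rules of Table \ref{Table_Adaptive_algs} case by case, always restricting to $k \ge K_0$. For Adam and NAdam, $v_{k+1} = (1-\tau_2\eta_k)v_k + \tau_2\eta_k g_k^2$ is a convex combination, so $\norm{v_{k+1}}_\infty \le \max\{\norm{v_k}_\infty, M_g^2\}$; AdaBelief is identical with $g_k$ replaced by $g_k - m_{k+1}$, bounded by $M_g + M_m$; AMSGrad is even easier, as $v_{k+1} = \max\{v_k, g_k^2\}$ componentwise makes each $(v_k)_i$ a running maximum bounded by $\max\{\norm{v_{K_0}}_\infty, M_g^2\}$. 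The one case that does not reduce to a convex combination of bounded quantities --- and the step I expect to be the main obstacle --- is Yogi, where $v_{k+1} = v_k - \tau_2\eta_k\,\mathrm{sign}(v_k - g_k^2)\odot g_k^2$; here I would exploit the ``attracting'' structure coordinatewise: if $(v_k)_i \ge (g_k^2)_i$ the update does not increase $(v_k)_i$ and (using $\tau_2\eta_k \le 1$) leaves it $\ge 0$, while if $(v_k)_i < (g_k^2)_i$ then $(v_{k+1})_i < (1+\tau_2\eta_k)(g_k^2)_i \le 2M_g^2$, so $(v_{k+1})_i \le \max\{(v_k)_i, 2M_g^2\}$; a similar estimate keeps $(v_k)_i$ bounded below, giving $\sup_k \norm{v_k} < +\infty$ almost surely. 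Combining the three bounds yields $\sup_k(\norm{\xk} + \norm{m_k} + \norm{v_k}) < +\infty$ almost surely, which is exactly the statement needed to verify Assumption \ref{Assumption_alg}(2) for Algorithm \ref{Alg:ADAM} and hence to bring Theorem \ref{The_convergence} to bear.
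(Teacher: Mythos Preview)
Your proposal is correct and follows essentially the same route as the paper: bound $\{g_k\}$ via local boundedness of the conservative fields on the compact set carrying $\{\xk\}$, propagate this to $\{m_k\}$ via the convex-combination structure of the momentum update, and then go through Table~\ref{Table_Adaptive_algs} case by case for $\{v_k\}$. You are in fact a bit more careful than the paper in two places: you explicitly use $\eta_k \to 0$ to secure $\tau_i\eta_k < 1$ before invoking the convex-combination bound (the paper writes $\norm{m_{k+1}} \le (1-\tau_1\eta_k)\norm{m_k} + \tau_1\eta_k\norm{g_k}$ and jumps to $\max\{\norm{m_0},\sup_k\norm{g_k}\}$ without comment), and your coordinatewise treatment of Yogi is more transparent than the paper's one-line norm inequality $\norm{v_{k+1}} \le \max\{\norm{v_k},(1+\tau_2\eta_k)\norm{g_k^2}\}$, which really needs a coordinatewise reading to be justified since different coordinates may fall into different branches of the $\mathrm{sign}$.
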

\begin{proof}
        Based on Assumption \ref{Assumption_Implementation_f}(3), Assumption \ref{Assumption_Implementation}(1) and Assumption \ref{Assumption_Implementation}(4), we have
        \begin{equation} 
            \label{Eq_Le_UB_ADAM_0}
            \sup_{k\geq 0} \norm{g_k} \leq \sup_{k\geq 0}\sup_{d \in \D(\xk, s_k)} \norm{d} \leq \sup_{k\geq 0}p_{\Theta}(s_k) \leq M_{\Theta}. 
        \end{equation}
	From the updating rule in Algorithm \ref{Alg:ADAM}, it holds for any $k\geq 0$ that 
	\begin{equation}
		\label{Eq_Le_UB_ADAM_1}
		\norm{\mkp} \leq (1-\tau_1  \eta_k) \norm{\mk} + \tau_1   \eta_k \norm{g_k} \leq \max\left\{\norm{m_0}, \sup_{k \geq 0}\norm{g_k}\right\}. 
	\end{equation} 
	Therefore, we can conclude that $\sup_{k \geq 0}\norm{\mk} <+\infty$. 
	
	Next, we prove that the sequence $\{\vk\}$ is uniformly bounded for all the updating schemes in Table \ref{Table_Adaptive_algs}. 
	
	{\bf Adam and NAdam:}
	For any $k\geq 0$, it holds that 
	\begin{equation*}
		\norm{\vkp} \leq (1-\tau_2 \eta_k) \norm{\vk} + \tau_2  \eta_k \norm{g_k^2} \leq \max\left\{\norm{v_0}, \sup_{k \geq 0}\norm{g_k^2}\right\}.
	\end{equation*}
	Therefore, we can conclude that $\sup_{k \geq 0}\norm{\vk} <+\infty$.
	
	{\bf AdaBelief:} 
	For any $k\geq 0$, it holds that 
	\begin{equation*}
		\norm{\vkp} \leq (1-\tau_2 \eta_k) \norm{\vk} + \tau_2  \eta_k \norm{(g_k-\mkp)^2} \leq \max\left\{\norm{v_0}, \sup_{k \geq 0}\norm{(g_k-\mkp)^2}\right\}.
	\end{equation*}
	Therefore, we can conclude that $\sup_{k \geq 0}\norm{\vk} <+\infty$.
	
	{\bf AMSGrad:}
	For any $k\geq 0$, it holds that 
	\begin{equation*}
		\begin{aligned}
		    \sup_{k \geq 0} \norm{\vkp} = \norm{  \vk + \tau_2 \eta_k \max\{ 0, g_k^2 - \vk \}  } \leq  \max\{ \norm{v_0} , \sup_{k \geq 0}\norm{g_k^2} \} <+\infty. 
		\end{aligned}     
	\end{equation*}
	
	{\bf Yogi:}
	For any $k\geq 0$, it holds that
	\begin{equation*}
		\norm{\vkp} \leq \max\left\{ \norm{\vk} , (1+\tau_2 \eta_k)\norm{g_k^2 } \right\}.
	\end{equation*}
	Therefore, we can conclude that 
	\begin{equation*}
		\sup_{k \geq 0} \norm{\vkp} \leq \max\left\{ \norm{v_0} , \sup_{k \geq 0}  (1+\tau_2 \eta_k)\norm{g_k^2 } \right\} <+\infty. 
	\end{equation*}

	Combined with the above inequalities, we can conclude that for any of the updating schemes in Table \ref{Table_Adaptive_algs}, it holds that 
	\begin{equation*}
		\sup_{k\geq 0} \norm{\xk} +\norm{\mk} +\norm{\vk}  <+\infty.
	\end{equation*}
	This completes the proof. 
\end{proof}

Next, we establish the convergence properties for Algorithm \ref{Alg:ADAM} by relating it to the framework \eqref{Eq_framework}. The following corollary demonstrates that Algorithm \ref{Alg:ADAM} fits the framework \eqref{Eq_framework} when choosing the updating scheme for the estimators $\{\vk\}$ specified in Table \ref{Table_Adaptive_algs}. Consequently, the convergence properties of Algorithm \ref{Alg:ADAM} directly follow from Theorem \ref{The_convergence}.
\begin{coro}
	\label{Coro_ADAM_Convergence}
	For any sequence $\{\xk\}$ generated by Algorithm \ref{Alg:ADAM}. Suppose Assumption \ref{Assumption_Implementation_f} and Assumption \ref{Assumption_Implementation} hold, the sequence $\{\vk\}$ follows the schemes in Table \ref{Table_Adaptive_algs}, and $(1-\kappa)\tau_2 \leq 4\tau_1$. Then almost surely,  every cluster point of $\{\xk\}$ is a $\D_f$-stationary point of $f$ and the sequence $\{f(\xk)\}$ converges. 
\end{coro}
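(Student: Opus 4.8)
The plan is to show that Algorithm~\ref{Alg:ADAM}, equipped with any of the update rules for $\{v_k\}$ listed in Table~\ref{Table_Adaptive_algs}, is a particular instance of the general framework \eqref{Eq_framework} with $\gamma = \tfrac12$ and $\theta_k = \eta_k$, and then to invoke Theorem~\ref{The_convergence}. Under this identification the parameter requirement of Theorem~\ref{The_convergence}, namely $(1-\kappa)\gamma\tau_2 \le 2\tau_1$, becomes exactly $(1-\kappa)\tau_2 \le 4\tau_1$, which is the hypothesis of the corollary. It therefore suffices to verify Assumption~\ref{Assumption_f} and all six parts of Assumption~\ref{Assumption_alg} for the sequences produced by Algorithm~\ref{Alg:ADAM}. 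Assumption~\ref{Assumption_f} is handled first using Remark~\ref{Rmk_Application_finite_sum}: $\D_f(x) := \tfrac1N\sum_{i=1}^N \D_{f_i}(x)$ is a conservative field admitting $f$ as a potential function, it is definable, $f$ is locally Lipschitz and bounded from below, and by Proposition~\ref{Prop_definable_regularity} the set $\{f(x):0\in\D_f(x)\}$ is finite, hence has empty interior in $\bb{R}$.

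Next I would dispatch the easy parts of Assumption~\ref{Assumption_alg}. Lemma~\ref{Le_UB_ADAM} supplies the boundedness of $\{(x_k,m_k,v_k)\}$ (and, once the directions are identified below, of the directions too), giving Assumption~\ref{Assumption_alg}(2); Assumption~\ref{Assumption_Implementation}(2) gives Assumption~\ref{Assumption_alg}(6) with the choice $\theta_k = \eta_k$, since then $\tfrac{\theta_k^2}{\eta_k}\log k = \eta_k\log k \to 0$; and Assumption~\ref{Assumption_alg}(1),(3) follow by reading off the explicit $\ca{U}$ in Table~\ref{Table_Adaptive_algs} and using that $v_k\ge0$ holds throughout and that each $\ca{S}_i(x)$ consists of nonnegative (squared) vectors, so that $\widetilde{\mathrm{sign}}(v)\odot\ca{U}(x,m,v)\ge\kappa|v|\ge0$ with $\kappa = 0$ for Adam, AdaBelief, NAdam, Yogi and $\kappa = 1$ for AMSGrad.

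The heart of the argument is to cast the updates in the form \eqref{Eq_framework} while respecting Assumption~\ref{Assumption_alg}(4)--(5). Write $\bar g_k := \bb{E}[g_k\mid\ca{F}_{k-1}]$, which lies in $\D_f(x_k)$ because $i_k$ is drawn uniformly and $\D_f = \tfrac1N\sum_i\D_{f_i}$ is convex-valued, and $\zeta_k := g_k - \bar g_k$, a uniformly bounded martingale difference sequence. The momentum recursion becomes $m_{k+1}-m_k = -\eta_k(\tau_1 m_k - \tau_1\bar g_k) + \eta_k\tau_1\zeta_k$, so $d_{m,k} := \tau_1 m_k - \tau_1\bar g_k$ lies exactly in the second block of $\ca{G}(x_k,m_k,v_k)$ and $\xi_{m,k} := -\tau_1\zeta_k$ is a bounded MDS. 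For $v$ one checks, case by case, that the conditional mean of the increment direction lies in the mapping $\ca{U}$ recorded in Table~\ref{Table_Adaptive_algs} (for Adam, $\bb{E}[g_k^2\mid\ca{F}_{k-1}]\in\ca{S}(x_k)\subseteq\mathrm{sign}(v_k)\odot\ca{S}(x_k)$ since $v_k\ge0$; analogous elementary identities handle AdaBelief, Yogi, and the monotone AMSGrad recursion), which identifies $d_{v,k}$ in the third block of $\ca{G}$ together with a bounded MDS $\xi_{v,k}$. For the $x$-update the delicate point is that the actual direction involves the \emph{new} iterates $v_{k+1},m_{k+1}$ and the scalings $\rho_{m,k+1},\rho_{v,k+1}$, whereas $\ca{G}$ is evaluated at $(x_k,m_k,v_k)$; I would set $d_{x,k} := (|v_k|+\varepsilon)^{-1/2}\odot(m_k + \alpha\bar g_k) + e_k$, where $e_k$ is the discrepancy $(\rho_{v,k+1}|v_{k+1}|+\varepsilon)^{-1/2}\odot(\rho_{m,k+1}m_{k+1}+\alpha\bar g_k) - (|v_k|+\varepsilon)^{-1/2}\odot(m_k+\alpha\bar g_k)$, which tends to zero using $\rho_{m,k},\rho_{v,k}\to1$, $m_{k+1}-m_k\to0$, and $v_{k+1}-v_k\to0$ (the last holding in all cases — for AMSGrad because $\{v_k\}$ is monotone and bounded). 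Since the pure part of $d_{x,k}$ lies in the first block of $\ca{G}(x_k,m_k,v_k)$, we get $\mathrm{dist}(d_{x,k},\ca{G}_1)\le\|e_k\|\to0$, and the genuine noise $\xi_{x,k} := (|v_k|+\varepsilon)^{-1/2}\odot\alpha\zeta_k$ is conditionally mean zero precisely because it is paired with the $\ca{F}_{k-1}$-measurable factor $(|v_k|+\varepsilon)^{-1/2}$ rather than with $v_{k+1}$; stacking $(\xi_{x,k},\xi_{m,k},\xi_{v,k})$ over the common filtration gives Assumption~\ref{Assumption_alg}(5).

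The main obstacle, and the step I would spend the most care on, is exactly this last decomposition of the $x$-update: one must route the $v_{k+1}$-versus-$v_k$ (and $\rho\to1$) mismatch entirely through the \emph{drift} term so that Assumption~\ref{Assumption_alg}(4) holds, while keeping the stochastic part attached to an $\ca{F}_{k-1}$-measurable coefficient so that Assumption~\ref{Assumption_alg}(5) holds; the roles of the hypotheses $\rho_{m,k},\rho_{v,k}\to1$ and of Lemma~\ref{Le_UB_ADAM} are precisely to make the drift correction vanish and to keep all quantities bounded. Once \eqref{Eq_framework} together with Assumptions~\ref{Assumption_f}--\ref{Assumption_alg} are in place, Theorem~\ref{The_convergence} yields that almost surely every cluster point of $\{x_k\}$ satisfies $0\in\D_f(x)$ and $\{f(x_k)\}$ converges, which is the claim.
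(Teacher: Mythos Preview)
Your proposal is correct and follows essentially the same strategy as the paper: verify that Algorithm~\ref{Alg:ADAM} fits the framework \eqref{Eq_framework} with $\gamma=\tfrac12$, $\theta_k=\eta_k$, check Assumptions~\ref{Assumption_f} and~\ref{Assumption_alg}, and invoke Theorem~\ref{The_convergence}. The paper's own proof is much terser---it dispatches each assumption in one line (Lemma~\ref{Le_UB_ADAM} for boundedness, Lemma~\ref{Le_closed_graph} for the closed-graph property of $\ca{U}$, Remark~\ref{Rmk_Application_finite_sum} for the noise structure)---whereas you spell out the drift/noise decomposition explicitly, which is the substance behind those one-line citations.

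One small point to tighten: your $e_k$ as written (with $\bar g_k$) does not quite account for the full mismatch, since the actual update multiplies the noise $\alpha\zeta_k$ by $(\rho_{v,k+1}|v_{k+1}|+\varepsilon)^{-1/2}$, not by $(|v_k|+\varepsilon)^{-1/2}$. The residual $\bigl[(\rho_{v,k+1}|v_{k+1}|+\varepsilon)^{-1/2}-(|v_k|+\varepsilon)^{-1/2}\bigr]\odot\alpha\zeta_k$ also has to be absorbed into the drift correction; it vanishes almost surely for the same reasons (boundedness of $\zeta_k$, $v_{k+1}-v_k\to0$, $\rho_{v,k+1}\to1$), so your argument goes through once this term is added to $e_k$. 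You already flagged this juncture as the delicate one, so this is only a matter of writing the correction completely.
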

\begin{proof}
	We first check the validity of Assumption \ref{Assumption_alg}. Lemma \ref{Le_UB_ADAM} implies that Assumption \ref{Assumption_alg}(2) holds. Moreover, as discussed in \citep[Theorem 3.10]{bolte2022subgradient}, $\D_f$ is a conservative field that admits $f$ as its potential function. 
	Then it follows from Lemma \ref{Le_closed_graph} that $\ca{U}$ has closed graph  and is locally bounded, hence Assumption \ref{Assumption_alg}(3) holds with $\ca{U}$ chosen as the formulations in Table \ref{Table_Adaptive_algs} and $\gamma = \frac{1}{2}$. In addition, Lemma \ref{Le_UB_ADAM} illustrates that $\norm{\mkp - \mk} + \norm{\vkp - \vk}$ converges to $0$ as $k$ goes to infinity. 
	Then combined with Assumption \ref{Assumption_Implementation}(3), we can conclude that  Assumption \ref{Assumption_alg}(4) holds. Furthermore, Assumption \ref{Assumption_alg}(5) directly follows from \eqref{Eq_Le_UB_ADAM_0}, and Assumption \ref{Assumption_alg}(6)  directly follows from Assumption \ref{Assumption_Implementation}(2) by choosing $\theta_k = \eta_k$. 
 
    On the other hand, as the results in \citep[Theorem 3.10]{bolte2022subgradient} show that under Assumption \ref{Assumption_Implementation_f}(1)-(3), $f$ is a potential function that admits $\D_f$ as its conservative field. Then the validity of Assumption \ref{Assumption_f} directly follows from Assumption \ref{Assumption_Implementation_f}.

	Therefore, we can conclude that Algorithm \ref{Alg:ADAM} follows the framework in \eqref{Eq_framework}. Then from Theorem \ref{The_convergence}, we can conclude that almost surely,  every cluster point of $\{\xk\}$ is a $\D_f$-stationary point of $f$ and the sequence $\{f(\xk)\}$ converges. This completes the proof. 
\end{proof}

Since $\kappa$ can be set to $1$ in AMSGrad, the following corollary illustrates that AMSGrad converges with any combination of the parameters $\tau_1, \tau_2 >0$. This improves the results in \citet{reddi2019convergence}, where $f$ is assumed to be differentiable, while the stepsizes are chosen as  $\eta_k = \ca{O}(k^{-\frac{1}{2}})$. 
\begin{coro}
	\label{Coro_AMSGrad_Convergence}
	For any sequence $\{\xk\}$ generated by Algorithm \ref{Alg:ADAM} with AMSGrad updating scheme in Table \ref{Table_Adaptive_algs}. Suppose Assumption \ref{Assumption_Implementation_f} and Assumption \ref{Assumption_Implementation} hold. Then almost surely,  every cluster point of $\{\xk\}$ is a $\D_f$-stationary point of $f$ and the sequence $\{f(\xk)\}$ converges. 
\end{coro}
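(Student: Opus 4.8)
The plan is to reduce the statement to Corollary \ref{Coro_ADAM_Convergence}. Under the AMSGrad updating scheme the hypotheses of that corollary that do not involve the parameters $\tau_1,\tau_2$ are exactly Assumption \ref{Assumption_Implementation_f} and Assumption \ref{Assumption_Implementation}, which are assumed here, so the whole content of the proof is to show that the AMSGrad instance of Algorithm \ref{Alg:ADAM} realizes Assumption \ref{Assumption_alg}(3) with the constant $\kappa = 1$. Once this is established, the side condition $(1-\kappa)\tau_2 \leq 4\tau_1$ appearing in Corollary \ref{Coro_ADAM_Convergence} reads $0 \leq 4\tau_1$, which holds for every $\tau_1,\tau_2>0$, and the conclusion follows at once.

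To carry this out, recall from Table \ref{Table_Adaptive_algs} that the set-valued mapping associated with AMSGrad is $\ca{U}(x,m,v) = \mathrm{sign}(v) \odot \left( \frac{1}{N}\sum_{i\in[N]} \max\{|v|, \ca{S}_i(x)\}\right)$, with $\ca{S}_i(x) = \{ d\odot d : d\in \D_{f_i}(x)\}$ and $\alpha = 0$. The structural part of Assumption \ref{Assumption_alg}(3) — that $\ca{U}$ is locally bounded, compact valued, with closed graph — follows exactly as in the proof of Corollary \ref{Coro_ADAM_Convergence}: each $\D_{f_i}$ is locally bounded with closed graph by Assumption \ref{Assumption_Implementation_f}, so Lemma \ref{Le_closed_graph} propagates these properties through the coordinate-wise squaring map, the finite average, the coordinate-wise maximum with the continuous map $v\mapsto |v|$, and the product with $\mathrm{sign}(v)$.

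It remains to verify the inequality $\widetilde{\mathrm{sign}}(v)\odot \ca{U}(x,m,v) \geq |v| \geq 0$ for all $x,m,v\in\Rn$. Using the elementary identity $\widetilde{\mathrm{sign}}(v)\odot\mathrm{sign}(v) = (\widetilde{\mathrm{sign}}(v))^2$, every element of $\widetilde{\mathrm{sign}}(v)\odot \ca{U}(x,m,v)$ has the form $(\widetilde{\mathrm{sign}}(v))^2 \odot \left( \frac{1}{N}\sum_{i\in[N]} \max\{|v|, s_i\}\right)$ with $s_i\in\ca{S}_i(x)$, and I would argue coordinate by coordinate. On a coordinate $j$ with $v_j = 0$ the factor $((\widetilde{\mathrm{sign}}(v))^2)_j$ vanishes and $|v_j| = 0$, so both sides are zero; on a coordinate $j$ with $v_j\neq 0$ the factor equals $1$, and since every element of $\ca{S}_i(x)$ is a vector of squares, hence nonnegative, one has $\max\{|v_j|,(s_i)_j\} \geq |v_j|$, so the average is $\geq |v_j|$. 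This is precisely Assumption \ref{Assumption_alg}(3) with $\kappa = 1$, so Corollary \ref{Coro_ADAM_Convergence} applies with $(1-\kappa)\tau_2 = 0 \leq 4\tau_1$ for arbitrary $\tau_1,\tau_2>0$, yielding the claimed almost sure convergence of every cluster point of $\{\xk\}$ to a $\D_f$-stationary point and convergence of $\{f(\xk)\}$. No serious obstacle is expected here; the only point needing a little care is the coordinate split at $v_j = 0$, where the asserted inequality must be read coordinate-wise and both sides degenerate to zero.
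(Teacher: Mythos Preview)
Your proposal is correct and follows the same route the paper takes: the paper does not give a separate proof for this corollary but simply observes (in the sentence preceding it and in Table~\ref{Table_Adaptive_algs}) that AMSGrad has $\kappa = 1$, so that the parameter condition $(1-\kappa)\tau_2 \leq 4\tau_1$ of Corollary~\ref{Coro_ADAM_Convergence} is vacuous. Your coordinate-wise verification of $\widetilde{\mathrm{sign}}(v)\odot \ca{U}(x,m,v) \geq |v|$ is the explicit justification of that tabulated value $\kappa=1$, which the paper leaves implicit.
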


Finally, the following corollary demonstrates that under mild assumptions, with almost every initial point and stepsize in Algorithm \ref{Alg:ADAM}, the generated sequence $\{\xk\}$ can find the stationary points in the sense of Clarke subdifferential almost surely. Therefore, although AD algorithms may introduce spurious stationary points for $f$, Algorithm \ref{Alg:ADAM} can avoid such spurious stationary points for almost every choice of initial points and stepsizes. 

\begin{coro}
    \label{Coro_ADAM_Convergence_AS}
    Suppose Assumption \ref{Assumption_Implementation_f} holds. Moreover,  for any sequence $\{\xk\}$ generated by Algorithm \ref{Alg:ADAM} with the update schemes in Table \ref{Table_Adaptive_algs}, we assume that 
    \begin{enumerate}
        \item There exists a prefixed positive sequence $\{\nu_{k}\}$ and parameters $0<c_{\min}<c_{\max}$, such that the stepsizes $\{\eta_k\}$ in Algorithm \ref{Alg:ADAM} are set as $\eta_k = c\nu_k$ for any $k\geq 0$ with some $c \in (c_{\min},c_{\max})$.
        \item There exists a non-empty open subset $K \subset \Rn \times \Rn \times \Rn_+$ such that Assumption \ref{Assumption_Implementation} holds with any $(x_0, m_0, v_0, c) \in K \times (c_{\min},c_{\max})$.
    \end{enumerate}
    Then  for almost every $(x_0, m_0, v_0,c) \in K \times  (c_{\min}, c_{\max})$,  it holds almost surely that every cluster point of $\{\xk\}$ is a $\partial f$-stationary point of $f$  and the sequence $\{f(\xk)\}$ converges. 
\end{coro}

\begin{proof}
	Let  $A = \{x \in \Rn: \D_f(x) \neq \partial f(x)\}$. Then it holds from \citet{bolte2021conservative} that $\A$ is a zero-measure subset of $\Rn$. Therefore, the set $\{(x, m, v) \in \Rn\times \Rn \times \Rn_+: x \in A\}$ is zero-measure in $\Rn\times \Rn \times \Rn_+$. 

    Let 
    \begin{equation*}
        \small
        Q_k^{(1)}(x, m, v, s_k) = 
        \left[\begin{matrix}
            0\\
            \tau_1 m - \tau_1  \D(x, s_k)\\
            0\\
        \end{matrix}\right], \quad Q_k^{(2)}(x, m, v, s_k) = 
        \left[\begin{matrix}
            0\\
            0\\
             \tau_2 R_{\ca{U}}(\D(x, s_k), m, v) 
            \\
        \end{matrix}\right],
    \end{equation*}
    and 
    \begin{equation*}
        Q_k^{(3)}(x, m, v, s_k) = 
        \left[\begin{matrix}
            (\rho_{v, k+1}|v| + \varepsilon)^{-\frac{1}{2}} \odot (\rho_{m, k+1} m + \alpha \D(x, s_k))\\
            0\\
            0\\
        \end{matrix}\right].
    \end{equation*}
 For almost every $s \in \Theta$, $f_s$ is a definable function that admits $x\mapsto \D(x, s)$ as its conservative field, \citet[Theorem 4]{bolte2021conservative} illustrates that the set-valued mapping $x\mapsto \D(x, s)$ is almost everywhere $\ca{C}^1$. Moreover, from the update schemes from Table \ref{Table_Adaptive_algs}, we can conclude that all the listed $R_{\ca{U}}$ is semi-algebraic.  Therefore, the set-valued mappings $\ca{Q}_k^{(1)}$,  $\ca{Q}_k^{(2)}$ and $\ca{Q}_k^{(3)}$ are almost everywhere $\ca{C}^1$ for almost every $s \in \Theta$ and any $k\geq 0$. Moreover, Algorithm \ref{Alg:ADAM} can be expressed as 
 \begin{equation*}
     \begin{aligned}
         &\left(x_{k+\frac{1}{3}}, m_{k+\frac{1}{3}}, v_{k+\frac{1}{3}} \right) \in (\xk, \mk, \vk) - \eta_k Q_k^{(1)}(\xk, \mk, \vk, s_k),\\
         &\left(x_{k+\frac{2}{3}}, m_{k+\frac{2}{3}}, v_{k+\frac{2}{3}} \right) 
         \in (x_{k+\frac{1}{3}}, m_{k+\frac{1}{3}}, v_{k+\frac{1}{3}}) - \eta_k Q_k^{(2)}\left(x_{k+\frac{1}{3}}, m_{k+\frac{1}{3}}, v_{k+\frac{1}{3}}, s_k\right),\\
         &(\xkp, \mkp, \vkp) \in \left(x_{k+\frac{2}{3}}, m_{k+\frac{2}{3}}, v_{k+\frac{2}{3}}\right) - \eta_k  Q_k^{(3)}\left(x_{k+\frac{2}{3}}, m_{k+\frac{2}{3}}, v_{k+\frac{2}{3}}, s_k\right). 
     \end{aligned} 
 \end{equation*}
 
 From Theorem \ref{The_convergence_AS} we can conclude that there exists a full-measure subset $S_{init}$ of $K\times [c_{\min}, c_{\max}]$ such that for any $(x_0, m_0, v_0, c) \in S_{init}$, almost surely, it holds that $\{ (\xk, \mk, \vk): k=0, \frac{1}{3}, \frac{2}{3}, 1,  ... \} \subset A^c$, implying that $ \bb{E}_{s_k \sim P}[g_k] = \bb{E}_{s_k \sim P}[\chi(\xk, s_k)]  \in \partial f(\xk)$ holds for any $k\geq 0$. Therefore, by fixing $\D_f$ as $\partial f$ in Theorem \ref{The_convergence}, we can conclude that every cluster point of $\{\xk\}$ is a $\partial f$-stationary point of $f$ and the sequence $\{f(\xk)\}$ converges almost surely.  This completes the proof. 
\end{proof}

\section{Applications: Gradient Clipping for Stochastic Subgradient Methods}
\label{Section_5}
In this section, we present stochastic subgradient methods with gradient clipping technique to solve \ref{Prob_Ori}, under the assumption that the evaluation noises for subgradients are heavy-tailed. Then we prove the convergence properties for our proposed methods based on the framework \eqref{Eq_framework}. 

For a clearer and more comprehensive presentation of our proposed methods, we follow the settings and notations in Section \ref{Section_4} throughout this section. In particular, we assume the objective function $f$ in \ref{Prob_Ori} can be expressed as 
\begin{equation*}
    f(x) = \bb{E}_{s \sim P}[f_{s}(x)],
\end{equation*}
where $(\Theta, \ca{F}, P)$ is the probability space.

For any given compact and convex subset $\ca{S} \subset \Rn$ such that $0$ lies in its interior, we define the clipping mapping $\mathrm{Clip}_{(\cdot)}(\cdot): \bb{R}_+ \times \Rn \to \Rn$ as,
\begin{equation}
	\label{Eq_defin_clipping_mapping}
	\mathrm{Clip}_C(g) := \mathop{\arg\min}_{x \in C \ca{S}} \norm{x -  g},
\end{equation}
where $C\ca{S} = \{ C s\,:\, s\in \ca{S}\}$.
Intuitively, the clipping mapping avoids the extreme values in evaluating the gradients by restricting them in a compact region, and hence helps enforce the convergence of stochastic subgradient methods with heavy-tailed evaluation noises.

The explicit expression of the clipping mapping depends on the choice of the convex compact set $\ca{S}$. When $\ca{S}$ is chosen as the $n$-dimensional hypercube $[-1,1]^{n}$, the corresponding clipping mapping $\mathrm{Clip}_C$ can be expressed as $\mathrm{Clip}_C(g) = \min\{\max\{g, -C\}, C\}$, which is a coordinate-wise mapping and can be computed easily. Furthermore, when $\ca{S}$ is chosen as the unit ball in $\Rn$, the corresponding clipping mapping becomes $\mathrm{Clip}_C(g) =  g \cdot \min\left\{ 1, \frac{C}{\norm{g}} \right\}$, which coincides with the clipping mapping employed in \citep{zhang2019gradient}. Note that it is not a coordinate-wise mapping. It is worth mentioning that the convergence properties of our analyzed stochastic subgradient methods are independent of the specific choice of $\ca{S}$. Hence we do not specify the choice of $\ca{S}$ in the clipping mapping $\mathrm{Clip}_C(\cdot)$ in the rest of this section.

\subsection{SGD with Gradient Clipping}
\label{Subsection_51}
In this subsection,   we consider a general framework of SGD that incorporates the gradient clipping technique to deal with heavy-tailed evaluating noises in its stochastic subgradients: 
\begin{equation}
	\label{Eq_SGDM}
	\tag{SGD-C}
        \boxed{
	\begin{aligned}
            &\text{Sample $s_k \sim P$ and choose $g_k = \chi(\xk, s_k)$},\\
		&\hat{g}_k= \mathrm{Clip}_{C_k}(g_k), \\
		&m_{k+1} = (1-\tau_1 \eta_k) m_k + \tau_1  \eta_k \hat{g}_k,\\
		&\xkp = \xk -\eta_k (   m_{k+1} +  \alpha \hat{g}_k ).\\
	\end{aligned}
        }
\end{equation}
Here $\chi$ is a selection of the set-valued mapping $\D$, as defined in Assumption \ref{Assumption_Implementation_f}(2). Moreover,  $\tau_1$ and $\alpha$ refer to the parameters for heavy-ball momentum and Nesterov momentum, respectively. Therefore, compared to existing works that concentrate on the convergence of standard SGD without any momentum term, the updating scheme in \ref{Eq_SGDM} encompasses various popular variants of SGD, including SGD \citep{bolte2021conservative, bianchi2022convergence}, and its momentum accelerated variants \citep{nesterov2003introductory, loizou2017linearly, castera2021inertial}.

To establish the convergence properties for \ref{Eq_SGDM} method, we make the following assumptions.
\begin{assumpt}
	\label{Assumption_Clipping}
	\begin{enumerate}
		\item The parameters satisfy $\alpha \geq 0$, $\tau_1 > 0$. 
		\item The sequence $\{\xk\}$ is almost surely bounded. That is, 
		\begin{equation*}
			\sup_{k\geq 0} \norm{\xk} <  +\infty
		\end{equation*}
		holds almost surely. 
		\item The stepsizes $\{\eta_k\}$ and clipping parameters $\{C_k\}$ are positive and satisfy
		\begin{equation*}
			\sum_{k = 0}^{+\infty} \eta_k = +\infty, \quad  \lim_{k\to +\infty} \eta_k \log(k) = 0, \quad\lim_{k\to +\infty} C_k = +\infty, \quad  \text{and} \quad \lim_{k\to +\infty} C_k^2 \eta_k\log(k) = 0. 
		\end{equation*}
	\end{enumerate}
\end{assumpt}

Different from the existing works, in this section, we only assume the evaluation noises to be integrable in Assumption \ref{Assumption_Implementation_f}(3), without any further assumptions of the uniform boundedness such as Assumption \ref{Assumption_Implementation}(4). As far as we know, such an assumption is among the weakest ones in the relevant literature  \citep{zhang2019gradient,gorbunov2020stochastic,zhang2020adaptive,mai2021stability,qian2021understanding,elesedy2023u,reisizadeh2023variance}. Moreover,  Assumption \ref{Assumption_Clipping}(3) is mild, as we can always choose $C_k = C_0\left( \eta_k \log(k) \right)^{-\frac{1}{3}}$ in \ref{Eq_SGDM}.

 Let the $\sigma$-algebras $\{\ca{F}_k\}$ be chosen as $\ca{F}_k = \sigma(x_j, m_j, g_j, s_j: j< k)$, $d_k = \bb{E}_{s_k \sim P}[\hat{g}_k]$, and $\xi_k = \frac{\hat{g}_k - d_k}{C_k}$. Then $\hat{g}_k = d_k + C_k \xi_k$. Hence the update scheme in \ref{Eq_SGDM} can be rewritten as 
 \begin{equation*}
    \begin{aligned}
        m_{k+1} ={}& (1-\tau_1 \eta_k) m_k + \tau_1  \eta_k (d_k + C_k \xi_k),\\
		\xkp ={}& \xk -\eta_k (   m_{k+1} +  \alpha (d_k + C_k \xi_k) ).\\
    \end{aligned}
 \end{equation*}
 Here $(-(\mkp + \alpha d_k), \tau_1(-\mk + d_k))$ is regarded as the noiseless update direction for $(\xk, \mk)$, while $(\alpha C_k \xi_k, \tau_1 C_k \xi_k)$ can be interpreted as the corresponding evaluation noises.  We first present Lemma \ref{Le_clipping_convergence_dk} to exhibit some basic properties of the sequence $\{d_k\}$ and $\{\xi_k\}$. 
\begin{lem}
    \label{Le_clipping_convergence_dk}
    Suppose Assumption \ref{Assumption_Implementation_f} and Assumption \ref{Assumption_Clipping}(1) hold, then there exists a sequence of nonnegative constants $\{\delta_k\}$ such that $\lim_{k \to +\infty} \delta_k = 0$,  
    \begin{equation*}
         d_k \in \D_f^{\delta_k}(\xk), \quad \forall k\geq 0, 
    \end{equation*}
    and $\{\xi_k\}$ is a uniformly bounded martingale difference sequence.
\end{lem}
\begin{proof}
    Let $\varepsilon_{\ca{S}} = \mathop{\min}_{x \notin \ca{S}} \norm{x}$, and $M_{\ca{S}} = \mathop{\max}_{x \in \ca{S}} \norm{x}$. Then from the definition of $\hat{g}_k$, it holds for any $C>0$ and $k\geq 0$ that 
    \begin{equation*}
        \begin{aligned}
            &\bb{E}_{s_k \sim P}[\norm{g_k - \mathrm{Clip}_C(g_k)}] = \bb{E}_{s_k \sim P}\left[\norm{g_k - \mathrm{Clip}_C(g_k)} \cdot \mathbbm{1}_{\{ g_k \notin C\ca{S} \}}\right] \\
            \leq{}& \bb{E}_{s_k \sim P}\left[\norm{g_k } \cdot \mathbbm{1}_{\{\norm{g_k} \geq C\varepsilon_{\ca{S}} \}}\right] \leq \bb{E}_{s_k \sim P}\left[\norm{g_k } \cdot \mathbbm{1}_{\{ p_{\Theta}(s_k) \geq C \varepsilon_{\ca{S}}\}}\right] \\
            \leq{}& \bb{E}_{s_k \sim P}\left[ p_{\Theta}(s_k) \cdot \mathbbm{1}_{\{ p_{\Theta}(s_k) \geq {C \varepsilon_{\ca{S}}}\}}\right]. 
        \end{aligned}
    \end{equation*}
    As a result, from the fact that $p_{\Theta}$ is integrable over $\Theta$, it holds that 
    \begin{equation*}
        \lim_{C\to +\infty}\sup_{k \geq 0} \bb{E}_{s_k \sim P}\left[\norm{g_k - \mathrm{Clip}_{C}(g_k)}\right] = 0.
    \end{equation*}
    Therefore, let $\delta_k =  \bb{E}_{s_k \sim P}\left[\norm{g_k - \mathrm{Clip}_{C_k}(g_k)}\right]$,  then it is easy to verify that $\lim_{k\to +\infty}\delta_k = 0$. 
    Moreover, from the definition of $\delta_k$, we have
    \begin{equation*}
        \begin{aligned}
            & \mathrm{dist}\left( d_k, \D_f(\xk) \right) \leq  \norm{\bb{E}_{s_k \sim P}[\hat{g}_k] - \bb{E}_{s_k \sim P}[g_k]} = \norm{\bb{E}_{s_k \sim P}\left[g_k - \mathrm{Clip}_{C_k}(g_k)\right]} \\
            \leq{}&  \bb{E}_{s_k \sim P}\left[\norm{g_k - \mathrm{Clip}_{C_k}(g_k)}\right] = \delta_k.
        \end{aligned}
    \end{equation*}

    Furthermore, from the definition of $\{\xi_k\}$, it holds for any $k\geq 0$ that $\norm{\hat{g}_k} \leq C_k M_{\ca{S}}$ almost surely. Then we can conclude that almost surely, $\sup_{k\geq 0} \norm{\xi_k} \leq M_{\ca{S}}$. Moreover, as $s_k$ is chosen independently from $\{s_0,..., s_{k-1}\}$, it holds that $\bb{E}[\xi_k|\ca{F}_{k-1}] =   \bb{E}_{s_k\sim P}[\xi_k] = 0$. Therefore, we can conclude that $\{\xi_k\}$ is a uniformly bounded martingale difference sequence. This completes the proof.  
\end{proof}

Next, we show that the sequence $\{\mk\}$ is almost surely uniformly bounded in the following proposition, even if the corresponding evaluation noises are not uniformly bounded. The proof for Proposition \ref{Prop_Clipping_mk_uniformly_bounded} is presented in Appendix A. 
\begin{prop}
    \label{Prop_Clipping_mk_uniformly_bounded}
    Suppose Assumption \ref{Assumption_Implementation_f} and Assumption \ref{Assumption_Clipping} hold, then we have 
    \begin{equation*}
        \sup_{k\geq 0} \norm{\mk} <+\infty.
    \end{equation*}
\end{prop}

In the following theorem, we demonstrate that the framework \ref{Eq_SGDM} conforms to the framework \eqref{Eq_framework}, and directly establish its convergence properties based on Theorem \ref{The_convergence} and Theorem \ref{The_convergence_AS}.

\begin{theo}
	\label{The_Gradient_Clipping_Convergence}
	Let $\{\xk\}$ be the sequence generated by \ref{Eq_SGDM}. Suppose Assumption \ref{Assumption_Implementation_f} and Assumption \ref{Assumption_Clipping} hold. Then, almost surely, any cluster point of $\{\xk\}$ is a $\D_f$-stationary point of $f$, and the sequence $\{f(\xk)\}$ converges.
\end{theo}
\begin{proof}
	Let $d_k = \bb{E}_{s_k \sim P}[\hat{g}_k]$, then from Lemma \ref{Le_clipping_convergence_dk}, it holds that $\lim_{k\to +\infty}\mathrm{dist}\left( d_k, \D_f(\xk) \right) = 0$, and $\{\xi_k\}$ is a uniformly bounded martingale difference sequence. 
 
    Then we set $\theta_k = C_k \eta_k$ in the framework \eqref{Eq_framework}.  With $\gamma =0$, $\varepsilon = 1$, and $\ca{U}(x,m,v) = \{0\}$ in \eqref{Eq_mapping_G}, \ref{Eq_SGDM} can be reformulated as the following scheme, 
	\begin{equation*}
		(\xkp, \mkp, \vkp) = (\xk, \mk, \vk) - \eta_k (d_{x, k}, d_{m,k}, d_{v,k}) - \theta_k( \alpha\xi_k, -\tau_1 \xi_k, 0),
	\end{equation*}
	where $(d_{x, k}, d_{m,k}, d_{v,k}) = (m_k + \alpha d_k, \tau_1 \mk - \tau_1 d_k, \tau_2 v)$.

    Next, we check the validity of Assumption \ref{Assumption_alg}. Assumption \ref{Assumption_Clipping}(1)-(2) and Proposition \ref{Prop_Clipping_mk_uniformly_bounded} imply Assumption \ref{Assumption_alg}(1)-(2), while Assumption \ref{Assumption_alg}(3) holds as we set $\ca{U}(x,m,v) = \{0\}$ in \eqref{Eq_framework}.  
    Moreover, from the uniform boundedness of $\{(\xk, \mk)\}$ and Lemma \ref{Le_clipping_convergence_dk},  it is easy to verify that there exists a diminishing sequence $\{\delta_k\}$ such that  $ (d_{x, k}, d_{m,k}, d_{v,k})  \in  \ca{G}^{\delta}(\xk, \mk, \vk)$ with $\gamma =0$, $\varepsilon = 1$, and $\ca{U}(x,m,v) = \{0\}$ in \eqref{Eq_mapping_G}. Then the validity of 
    Assumption \ref{Assumption_alg}(4) is guaranteed. In addition, Assumption \ref{Assumption_alg}(5) follows from the fact that $\{\xi_k\}$ is a martingale difference sequence, as illustrated in Lemma \ref{Le_clipping_convergence_dk}.  Furthermore, Assumption \ref{Assumption_alg}(6) directly follows 
    Assumption \ref{Assumption_Clipping}(4) with $\theta = C_k \eta_k$.   Therefore, from Theorem  \ref{The_convergence}, we can conclude that any cluster point of $\{\xk\}$ is a $\D_f$-stationary point of $f$, and the sequence $\{f(\xk)\}$ converges. This completes the proof. 
\end{proof}

The following theorem illustrates that under mild assumptions with almost every initial points and stepsizes, any sequence generated by \ref{Eq_SGDM} converges to $\partial f$-stationary points of $f$, hence avoids the spurious stationary points introduced by conservative field $\D_f$. 
\begin{theo}
	\label{The_Gradient_Clipping_Convergence_AS}
	Suppose Assumption \ref{Assumption_Implementation_f} holds. Moreover, for the sequence  $\{\xk\}$  generated by \ref{Eq_SGDM}, we assume that  
        \begin{enumerate}
            \item There exists a prefixed positive sequence $\{\nu_k\}$ and the parameters $0<c_{\min}<c_{\max}$ such that the stepsizes $\{\eta_k\}$ are chosen as $\eta_k = c\nu_k$ for any $k\geq 0$ with some $c \in (c_{\min}, c_{\max})$. 
            \item There exists a non-empty open subset $K$ of $\Rn \times \Rn$ such that Assumption \ref{Assumption_Clipping}  holds with any $(x_0, m_0, c) \in K \times (c_{\min}, c_{\max})$. 
        \end{enumerate}
        Then  for almost every $(x_0, m_0, c)  \in K \times (c_{\min}, c_{\max})$,  it holds almost surely that every cluster point of $\{\xk\}$ is a $\partial f$-stationary point of $f$  and the sequence $\{f(\xk)\}$ converges. 
\end{theo}

\begin{proof}
    For almost every $s \in \Theta$, since $f_s$ is definable, we can conclude that  the set-valued mapping $x\mapsto \partial f_s(x)$ is almost everywhere $\ca{C}^1$. Then we consider the following set-valued mappings
    \begin{equation*}
        \ca{Q}^{(1)}_k(x, m, s_k) = \left[\begin{matrix}
			0\\[3pt]
			-\tau_1 m + \tau_1 \mathrm{Clip}_{C_k}\left( \D(x, s_k)  \right)
		\end{matrix}\right]
    \end{equation*}
    and 
    \begin{equation*}
        \ca{Q}^{(2)}_k(x, m, s_k) = \left[\begin{matrix}
			-m - \alpha \mathrm{Clip}_{C_k}\left( \D(x, s_k)\right) \\[3pt]
			0
		\end{matrix}\right].
    \end{equation*}
    From the definability of $f_s$ and $\D(\cdot, s)$, $\D(\cdot, s)$ is almost everywhere $\ca{C}^1$ over $\Rn$ \citep[Theorem 4]{bolte2021conservative}. As a result, from the continuity of clipping mapping $\mathrm{Clip}_{C}$, we can conclude that for any $k\geq 0$ and almost every $s \in \Theta$, both $\ca{Q}^{(1)}(\cdot, \cdot,  s)$ and $\ca{Q}^{(2)}(\cdot, \cdot,  s)$ are almost everywhere $\ca{C}^1$ over $\Rn \times \Rn$.

    From the expression of $\ca{Q}^{(1)}$ and $\ca{Q}^{(2)}$, the update scheme in \ref{Eq_SGDM} can be reshaped as 
    \begin{equation*}
        \begin{aligned}
            &(x_{k+\frac{1}{2}},m_{k+\frac{1}{2}}) \in (\xk, \mk) - \eta_k \ca{Q}_k^{(1)}(\xk, \mk, s_k),\\
            &(\xkp, \mkp) \in (x_{k+\frac{1}{2}},m_{k+\frac{1}{2}}) - \eta_k \ca{Q}_k^{(2)}(x_{k+\frac{1}{2}},m_{k+\frac{1}{2}}, s_k).
        \end{aligned}
    \end{equation*}
        
    Notice that the set $A := \{ (x, m) \in \Rn \times \Rn: \D_f(x) \neq  \partial f(x) \}$ is zero-measure in $\Rn \times\Rn$ as illustrated in \citet{bolte2021conservative}. 
	Therefore, Theorem \ref{The_convergence_AS} illustrates that for almost every $(x_0, m_0, c) \in K\times (c_{\min}, c_{\max})$, it holds almost surely that the sequence $\{(\xk, \mk)\}$ generated by \ref{Eq_SGDM} satisfies $\{(\xk, \mk)\} \subset A^c$.

    From the definition of $A$, we can conclude that $\D_f(x) = \partial f(x)$ holds for any $x \in A^c$. Therefore, for any sequence $\{(\xk, \mk)\}$ with those initial points $(x_0, m_0) \in K$ and scaling parameter $c \in (c_{\min}, c_{\max})$, 
    the corresponding conservative field $\D_f$ can be directly chosen as $\partial f$ since $\D_f(\xk) = \partial f(\xk)$ holds for any $k\geq 0$.  Therefore, Theorem \ref{The_convergence} illustrates that with those initial points $(x_0, m_0) \in K$ and scaling parameter $c \in (c_{\min}, c_{\max})$, any cluster point of $\{\xk\}$ is a $\partial f$-stationary point and the sequence $\{f(\xk)\}$ converges. This completes the proof.  
\end{proof}

\subsection{Adam-family Method with Gradient Clipping}
\label{Subsection_52}
In this subsection, we consider developing an Adam-family method (\ref{Eq_ADAMC}) that employs the gradient clipping technique for solving \ref{Prob_Ori} under heavy-tailed evaluation noises. Then we establish its convergence properties based on our proposed framework \eqref{Eq_framework}. The detailed method is summarized by the following update scheme.
\begin{equation}
	\label{Eq_ADAMC}
	\tag{ADAM-C}
        \boxed{
	\begin{aligned}
		&\text{Sample $s_k \sim P$ and choose $g_k = \chi(\xk, s_k)$},\\ 
		&\hat{g}_k= \mathrm{Clip}_{C_k}(g_k), \\
		&m_{k+1} = (1-\tau_1 \eta_k) m_k + \tau_1  \eta_k \hat{g}_k,\\
		&\vkp =  (1-\tau_2 \eta_k) \vk + \tau_2  \eta_k |\hat{g}_k|,\\
		&\xkp = \xk - \eta_k (\rho_{v,k+1}|v_{k+1}| + \varepsilon )^{-1} \odot ( \rho_{m,k+1} m_{k+1} +  \alpha g_k ).\\
	\end{aligned}
        }
\end{equation}
Here the $\chi$ is a selection of the set-valued mapping $\D$, as defined in Assumption \ref{Assumption_Implementation_f}(2). Moreover, the estimator ${\vk}$ is updated for tracking the first-order moment of $|g_k|$.

It is worth mentioning that the estimators $\{\vk\}$ in \ref{Eq_ADAMC} adopt a different update scheme as those in the original Adam \citep{kingma2014adam}, since the evaluation noises are assumed to be heavy-tailed.   In the original Adam, the estimators ${\vk}$ estimate the noise level of each coordinate by tracking the second-order moment of the stochastic subgradients ${g_k}$. 
However, when the evaluation noise of $g_k$ is assumed to be heavy-tailed, $\bb{E}[\mathrm{Clip}_C(g_k)^2]$ may diverge to infinity  as $C\to +\infty$. As a result, the sequence $\{\vk\}$ may not be uniformly bounded in Adam under heavy-tailed noises, leading to the absence of convergence guarantees.

To estimate the noise level of each coordinate under heavy-tailed evaluation noises, we consider tracking the first-order moment of $\{|g_k|\}$ by $\{\vk\}$, and employ the  $(|v_{k+1}| + \varepsilon )^{-1}$ as the coordinate-wise adaptive stepsizes. Under Assumption \ref{Assumption_Implementation_f}, $\bb{E}_{s_k \sim P}[|g_k|]$ exists and takes finite values almost surely. As a result, the estimators $\{\vk\}$ in \ref{Eq_ADAMC} can be proved to be uniformly bounded, which is crucial in establishing the convergence properties for \ref{Eq_ADAMC} based on the framework \eqref{Eq_framework}.

% \begin{rmk}
%     In the conventional Adam \citep{kingma2014adam}, the estimators $\{\vk\}$ is proposed to reflect the noise level of each coordinate. To address this, 
% \end{rmk}

To establish the convergence properties for \ref{Eq_ADAMC}, we make the following assumptions.
\begin{assumpt}
	\label{Assumption_Clipping_ADAM}
	\begin{enumerate}
		\item The parameters satisfy $\alpha \geq 0$, $\tau_1, \tau_2,  \varepsilon > 0$ and $\tau_2 \leq 2\tau_1$. 
		\item The sequence $\{\xk\}$ is  almost surely bounded, i.e., 
		\begin{equation*}
			\sup_{k\geq 0} \norm{\xk} <+\infty
		\end{equation*}
		holds almost surely. 
		\item The stepsizes $\{\eta_k\}$ and clipping parameters $\{C_k\}$ are positive and satisfy
		\begin{equation*}
			\sum_{k = 0}^{+\infty} \eta_k = +\infty, \quad  \lim_{k\to +\infty} \eta_k \log(k) = 0, \quad\lim_{k\to +\infty} C_k = +\infty, \quad  \text{and} \quad \lim_{k\to +\infty} C_k^2 \eta_k\log(k) = 0. 
		\end{equation*}
		\item The scaling parameters $\{\rho_{m,k}\}$ and $\{\rho_{v,k}\}$ satisfy 
		\begin{equation*}
			\lim_{k \to +\infty} \rho_{m,k} = 1, \quad \lim_{k \to +\infty} \rho_{v,k} = 1.
		\end{equation*}
	\end{enumerate}
\end{assumpt}

We first present Proposition \ref{Prop_ClippingAdam_mk_uniformly_bounded_} to illustrate that the uniform boundedness of $\{\xk\}$ implies the uniformly boundedness of $\{\mk\}$ and $\{\vk\}$. The proof of Proposition \ref{Prop_ClippingAdam_mk_uniformly_bounded_} follows the same techniques as in Proposition \ref{Prop_Clipping_mk_uniformly_bounded}, hence we omit its proof for simplicity.  
\begin{prop}
    \label{Prop_ClippingAdam_mk_uniformly_bounded_}
    Suppose Assumption \ref{Assumption_Implementation_f} and Assumption \ref{Assumption_Clipping_ADAM} hold. Then we have $\sup_{k\geq 0} \norm{\mk} + \norm{\vk} <+\infty$.
\end{prop}

Next, we present the following theorem that illustrates the convergence properties of \ref{Eq_ADAMC}.
\begin{theo}
	\label{The_Gradient_Clipping_Convergence_Adam}
	Let $\{\xk\}$ be the sequence generated by \ref{Eq_ADAMC}. Suppose Assumption \ref{Assumption_Implementation_f} and Assumption \ref{Assumption_Clipping_ADAM} hold. Then, almost surely, any cluster point of $\{\xk\}$ is a $\D_f$-stationary point of $f$, and the sequence $\{f(\xk)\}$ converges. 
\end{theo}
\begin{proof}
	Let $\ca{W}(x) = \conv \left(\bb{E}_{s}[|\D(x, s) |]  \right)$. Then following Assumption \ref{Assumption_Implementation_f}(3), we can conclude that for any $x \in \Rn$ and almost every $s \in \Theta$, it holds that $\sup_{d \in |\D(x, s) |} \norm{d} \leq p_{\Theta}(s) $. Then together with \citep[Theorem 2]{shapiro2007uniform} and the fact that the set-valued mapping $x\mapsto |\D(x, s) |$ is locally bounded and graph closed for almost every $s \in \Theta$, we can conclude that $\ca{W}(x)$ is convex compact valued, graph closed and locally bounded. 
 
    Then with $\ca{U}(x,m,v) = \ca{W}(x)$ in \eqref{Eq_mapping_G}, we can show that \ref{Eq_ADAMC} fits in the framework \eqref{Eq_framework}. Moreover, similar to the proof in Theorem \ref{The_Gradient_Clipping_Convergence}, we can verify the validity of Assumption \ref{Assumption_alg}. Then from Theorem \ref{The_convergence} we can conclude that any cluster point of $\{\xk\}$ is a $\D_f$-stationary point of $f$ and the sequence $\{f(\xk)\}$ converges. This completes the proof. 
\end{proof}

Similar to Theorem \ref{The_Gradient_Clipping_Convergence_AS}, we can show that under mild assumptions with almost every initial point and stepsize, any sequence generated by \ref{Eq_ADAMC} is capable of finding $\partial f$-stationary points of $f$, regardless of the chosen conservative field in \ref{Eq_ADAMC}. 
\begin{theo}
	\label{The_Gradient_Clipping_Convergence_Adam_AS}
        Suppose Assumption \ref{Assumption_Implementation_f} holds. Moreover, for the sequence $\{(\xk, \mk, \vk)\}$ generated by \ref{Eq_ADAMC}, we assume that 
        \begin{enumerate}
            \item There exists a prefixed sequence $\{\nu_k\}$ and the  parameters $0< c_{\min}<c_{\max}$ such that the stepsizes $\{\eta_k\}$ in \ref{Eq_ADAMC} are chosen as $\eta_k = c\nu_k$ for any $k\geq 0$ with some $c \in (c_{\min}, c_{\max})$. 
            \item There exists a non-empty open subset $K$ of $\Rn \times \Rn\times \Rn_+$ such that Assumption \ref{Assumption_Clipping_ADAM} holds with any $(x_0, m_0, v_0, c) \in K\times (c_{\min}, c_{\max})$.
        \end{enumerate}
	Then  for almost every $(x_0, m_0, v_0, c) \in K\times (c_{\min}, c_{\max})$, it holds almost surely that  every cluster point of $\{\xk\}$ is a $\partial f$-stationary point of $f$  and the sequence $\{f(\xk)\}$ converges . 
\end{theo}
\begin{proof}
	Let $A := \{ (x, m, v) \in \Rn \times \Rn \times \Rn: \D_f(x) \neq  \partial f(x) \}$, and for any $k\geq 0$, we define  the set-valued mappings
        \begin{equation*}
            \small
            \ca{Q}_k^{(1)}(x, m, v, s) := 
            \left[\begin{matrix}
			0\\[3pt]
			\tau_1 m - \tau_1 \mathrm{Clip}_{C_k}(\D(x, s))\\[3pt]
			0
		\end{matrix}\right], \quad 
            \ca{Q}_k^{(2)}(x, m, v, s) := 
            \left[\begin{matrix}
			0\\[3pt]
			0\\[3pt]
			\tau_2 v - \tau_2 |\mathrm{Clip}_{C_k}(\D(x, s))|
		\end{matrix}\right],
        \end{equation*}
        and 
        \begin{equation*}
            \ca{Q}_k^{(3)}(x, m, v, s) := 
            \left[\begin{matrix}
			(\rho_{v,k+1}|v| + \varepsilon )^{-1} \odot \left( \rho_{m,k+1} m +  \alpha \mathrm{Clip}( \D(x, s) ) \right)\\[3pt]
			0\\[3pt]
			0
		\end{matrix}\right].
        \end{equation*}
        Then for any $k\geq 0$ and almost every $s \in \Theta$, $\ca{Q}_k^{(1)}(\cdot, \cdot, \cdot, s)$, $\ca{Q}_k^{(2)}(\cdot, \cdot, \cdot, s)$, and $\ca{Q}_k^{(3)}(\cdot, \cdot, \cdot, s)$  are almost everywhere $\ca{C}^1$ in $\Rn \times \Rn\times \Rn$. More importantly, the update scheme in \eqref{Eq_ADAMC} can be reshaped as 
        \begin{equation*}
         \begin{aligned}
             &\left(x_{k+\frac{1}{3}}, m_{k+\frac{1}{3}}, v_{k+\frac{1}{3}} \right) \in (\xk, \mk, \vk) - \eta_k Q_k^{(1)}(\xk, \mk, \vk, s_k),\\
             &\left(x_{k+\frac{2}{3}}, m_{k+\frac{2}{3}}, v_{k+\frac{2}{3}} \right) 
             \in (x_{k+\frac{1}{3}}, m_{k+\frac{1}{3}}, v_{k+\frac{1}{3}}) - \eta_k Q_k^{(2)}\left(x_{k+\frac{1}{3}}, m_{k+\frac{1}{3}}, v_{k+\frac{1}{3}}, s_k\right),\\
             &(\xkp, \mkp, \vkp) \in \left(x_{k+\frac{2}{3}}, m_{k+\frac{2}{3}}, v_{k+\frac{2}{3}}\right) - \eta_k  Q_k^{(3)}\left(x_{k+\frac{2}{3}}, m_{k+\frac{2}{3}}, v_{k+\frac{2}{3}}, s_k\right). 
         \end{aligned} 
        \end{equation*}

        Notice that the set $A$ is zero-measure in $\Rn \times\Rn \times \Rn$, following the same techniques as Theorem \ref{The_Gradient_Clipping_Convergence_AS}, we can prove that for almost every $(x_0, m_0, v_0, c) \in K \times (c_{\min}, c_{\max})$, it holds that $\{(\xk, \mk, \vk): k = 0,\frac{1}{3}, \frac{2}{3}, 1,  ...\} \subset A^c$. As a result, for almost every $(x_0, m_0, v_0, c) \in K \times (c_{\min}, c_{\max})$, 
        we can choose the conservative field $\D_f$ in \ref{Eq_ADAMC} as $\partial f$ in Theorem \ref{The_Gradient_Clipping_Convergence_Adam}. Therefore, Theorem \ref{The_convergence} illustrates that every cluster point of $\{\xk\}$ is a $\partial f$-stationary point of $f$  and the sequence $\{f(\xk)\}$ converges almost surely. This completes the proof. 
\end{proof}

\begin{rmk}
	Following the updating schemes in Table \ref{Table_Adaptive_algs}, we can also choose the updating scheme for the estimators $\{\vk\}$ in \ref{Eq_ADAMC} as one of the followings.
	\begin{itemize}
		\item {\bf AdaBelief-C:} $\vkp =  (1-\tau_2 \eta_k) \vk + \tau_2  \eta_k |\hat{g}_k - \mkp|$;
		\item {\bf AMSGrad-C:} $\vkp = \vk + \tau_2 \eta_k \max\{ 0, |\hat{g}_k|-\vk \}$;
		\item {\bf Yogi-C:} $\vkp =  \vk - \tau_2  \eta_k \mathrm{sign}(\vk - |\hat{g}_k|) \odot |\hat{g}_k|$.
	\end{itemize}
	Then for these stochastic adaptive subgradient methods with gradient clipping,  we can establish the same convergence properties by following the same proof routines as those in Theorem \ref{The_Gradient_Clipping_Convergence_Adam} and Corollary \ref{The_Gradient_Clipping_Convergence_Adam_AS}, hence we omit these proofs for simplicity. 
\end{rmk}

\section{Numerical Experiments}

In this section, we evaluate the numerical performance of our analyzed Adam-family methods for training nonsmooth neural networks. All the numerical experiments in this section are conducted on a server equipped with an Intel Xeon 6342 CPU and a NVIDIA GeForce RTX 3090 GPU, running Python 3.8 and PyTorch 1.9.0. 

\subsection{Comparison with Implementations in PyTorch}
\label{Section_numerical_adaptive}
In this subsection, we evaluate the numerical performance of Algorithm \ref{Alg:ADAM} by comparing it with the Adam-family methods available in PyTorch and torch-optimizer packages. In view of the great popularity of Adam-family methods in training nonsmooth neural networks, we aim to investigate whether we can preserve their high efficiency while providing convergence guarantees with minimal modifications to their implementations.

It is important to note that the Adam-family methods in PyTorch can be viewed as Algorithm \ref{Alg:ADAM} with a fixed $\eta_k = \eta_0$ in updating the momentum terms $\{\mk\}$ and estimators $\{\mk\}$ (i.e., Steps 5-6 in Algorithm \ref{Alg:ADAM}). Moreover, $\beta_1 := 1-\tau_1\eta_0$ and $\beta_2:= 1-\tau_2 \eta_0$ are commonly referred to as the momentum parameters for these Adam-family methods in PyTorch. To our best knowledge, these Adam-family methods with constant stepsizes in updating the momentum terms $\{\mk\}$ and estimators $\{\mk\}$ do not have any convergence guarantees in training nonsmooth neural networks. More importantly, some existing works \citep{reddi2019convergence,zhang2022adam} illustrate that Adam may diverge when $\beta_1 < \sqrt{\beta_2}$ and $f$ is assumed to be differentiable. 

In our numerical experiments, we investigate the performance of these compared Adam-family methods on training ResNet-50 \citep{he2016deep} for image classification tasks on the CIFAR-10 and CIFAR-100 data sets \citep{krizhevsky2009learning}. We set the batch size to 128 for all test instances and select the regularization parameter $\varepsilon$ as $\varepsilon = 10^{-15}$. Furthermore, at the $k$-th epoch, we choose the stepsize as 
$\eta_k = \frac{\eta_0}{\sqrt{k+1}}$ for all the tested algorithms. Following the settings in \citet{castera2021inertial}, we use a grid search to find a suitable initial stepsize $\eta_0$ and parameters $\tau_1, \tau_2$ for the Adam-family methods provided in PyTorch. We select the initial stepsize $\eta_0$ from $\{k_1 \times 10^{-k_2}: k_1 = 1,3,5,7,9, ~k_2 = 3,4,5\}$, and choose the parameters $\tau_1$, $\tau_2$ from $\{0.1/\eta_0, 0.05/\eta_0, 0.01/\eta_0, 0.005/\eta_0, 0.001/\eta_0\}$, to find a combination of $(\eta_0, \tau_1, \tau_2)$ that yields the most significant increase in accuracy after $20$ epochs.  All other parameters for these Adam-family methods in PyTorch remain fixed at their default values. 

For our proposed algorithms (i.e., Adam-family methods with diminishing stepsizes for $\{\mk\}$ and $\{\vk\}$ as in Algorithm \ref{Alg:ADAM}), we keep all other parameters the same as those available in PyTorch, as we aim to perform minimal modifications to their released counterparts. Moreover, to investigate the performance of our proposed Adam-family methods with the Nesterov momentum term, in each test instance, we choose the Nesterov momentum parameter $\alpha$ as $0$ and $0.1$, respectively. We run each test instance five times with different random seeds. In each test instance, all compared methods are tested using the same random seed and initialized with the same random weights by the default initialization function in PyTorch.

The numerical results are presented in Figure \ref{Fig_Test2_Cifar10} and Figure \ref{Fig_Test2_Cifar100}. These figures demonstrate that our proposed Adam-family methods with diminishing stepsizes exhibit the same performance as the existing Adam-family methods available in PyTorch and torch-optimizer packages.   These empirical results highlight the effectiveness of our proposed Adam-family methods, as they achieve  comparable performance to their widely used counterparts in the community. Furthermore, we note that the integration of Nesterov momentum can potentially lead to increased accuracy and reduced test loss across all tested Adam-family methods in our numerical experiments, especially in the classification tasks on the CIFAR-10 data set. These empirical results, together with our presented theoretical analysis, demonstrate that by simply choosing diminishing stepsizes for the momentum terms and estimators in existing Adam-family methods, we can preserve their high performance in practice while benefiting from the convergence guarantees in training nonsmooth neural networks. 
\begin{figure}[tb]
	\centering
	\subfigure[\scriptsize Test accuracy]{
		\begin{minipage}[t]{0.24\linewidth}
			\centering
			\includegraphics[width=\linewidth, height=0.12\textheight]{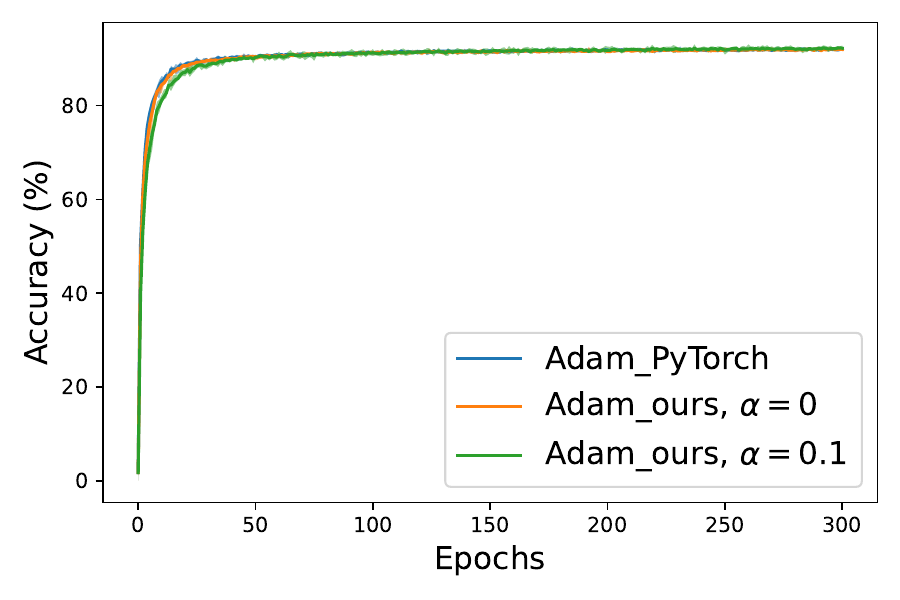}
			\label{Fig:Fig_Test2_Cifar10_Adam_acc}
			%\caption{Problem 2, SLAM}
		\end{minipage}%
	}%
	\subfigure[\scriptsize Test acc. after $200$ epochs]{
		\begin{minipage}[t]{0.24\linewidth}
			\centering
			\includegraphics[width=\linewidth, height=0.12\textheight]{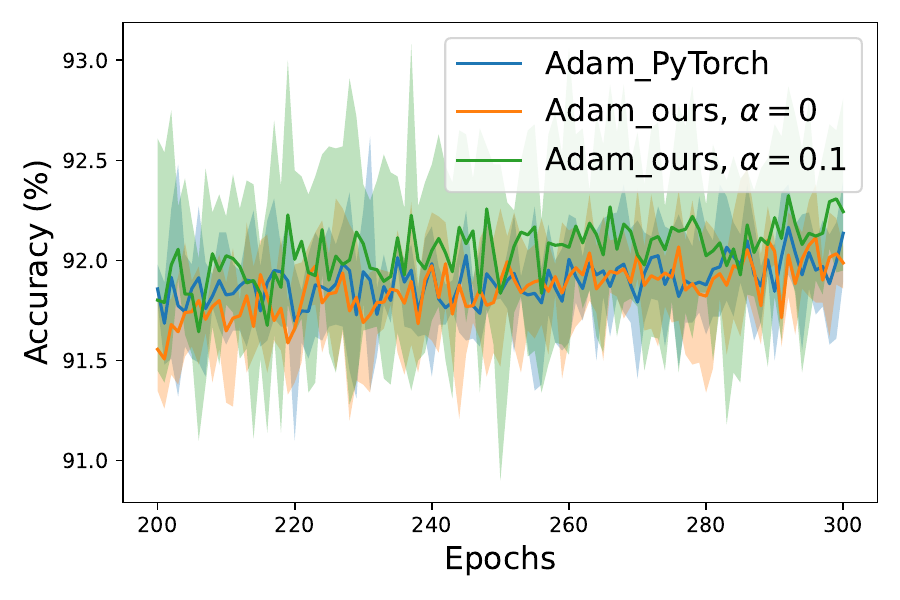}
			\label{Fig:Fig_Test2_Cifar10_Adam_acclater}
			%\caption{Problem 2, SLAM}
		\end{minipage}%
	}%
	\subfigure[\scriptsize Train loss]{
		\begin{minipage}[t]{0.24\linewidth}
			\centering
			\includegraphics[width=\linewidth, height=0.12\textheight]{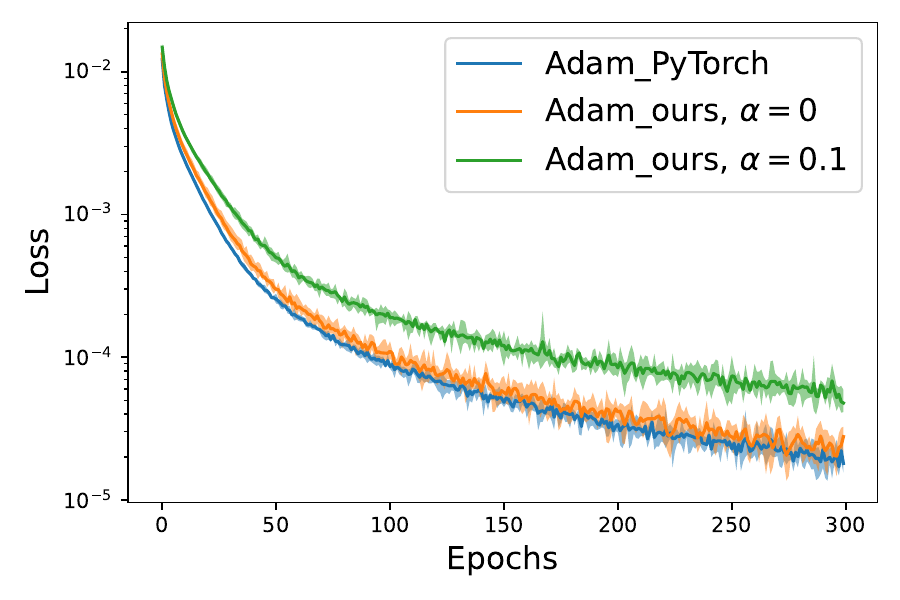}
			\label{Fig:Fig_Test2_Cifar10_Adam_trainloss}
			%\caption{Problem 2, SLAM}
		\end{minipage}%
	}%
	\subfigure[\scriptsize Test loss]{
		\begin{minipage}[t]{0.24\linewidth}
			\centering
			\includegraphics[width=\linewidth, height=0.12\textheight]{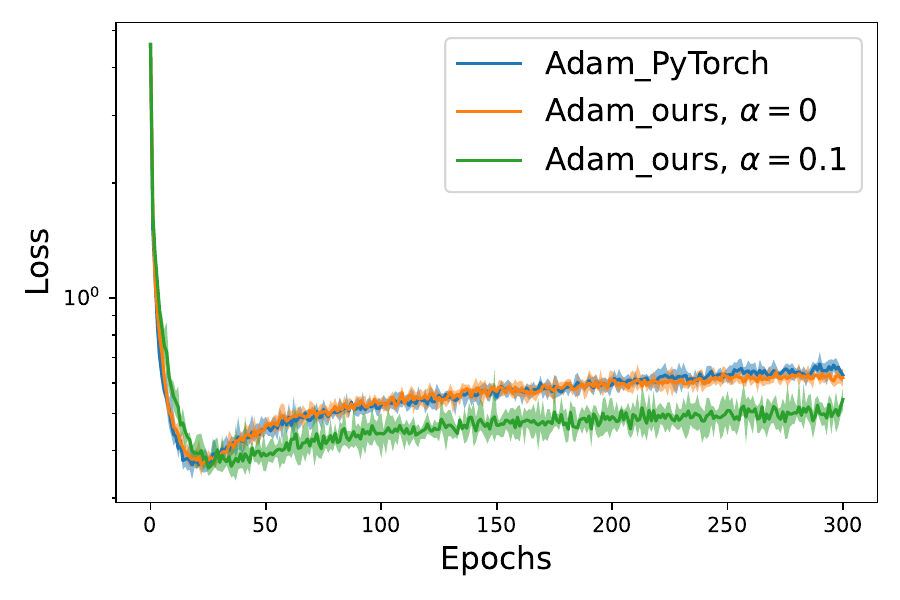}
			\label{Fig:Fig_Test2_Cifar10_Adam_testloss}
			%\caption{Problem 2, SLAM}
		\end{minipage}%
	}%

	\subfigure[\scriptsize Test accuracy]{
		\begin{minipage}[t]{0.24\linewidth}
			\centering
			\includegraphics[width=\linewidth, height=0.12\textheight]{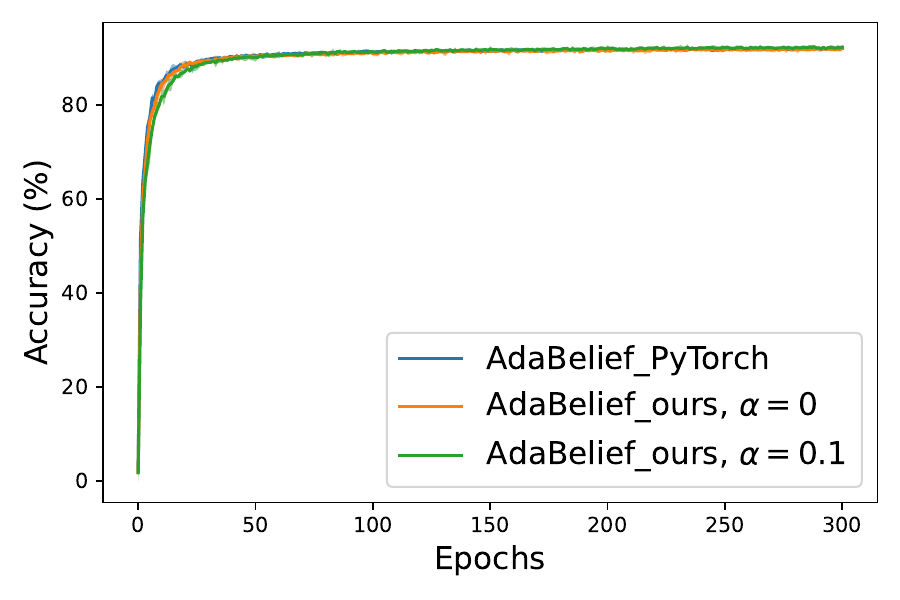}
			\label{Fig:Fig_Test2_Cifar10_AdaBelief_acc}
			%\caption{Problem 2, SLAM}
		\end{minipage}%
	}%
	\subfigure[\scriptsize Test acc. after $200$ epochs]{
		\begin{minipage}[t]{0.24\linewidth}
			\centering
			\includegraphics[width=\linewidth, height=0.12\textheight]{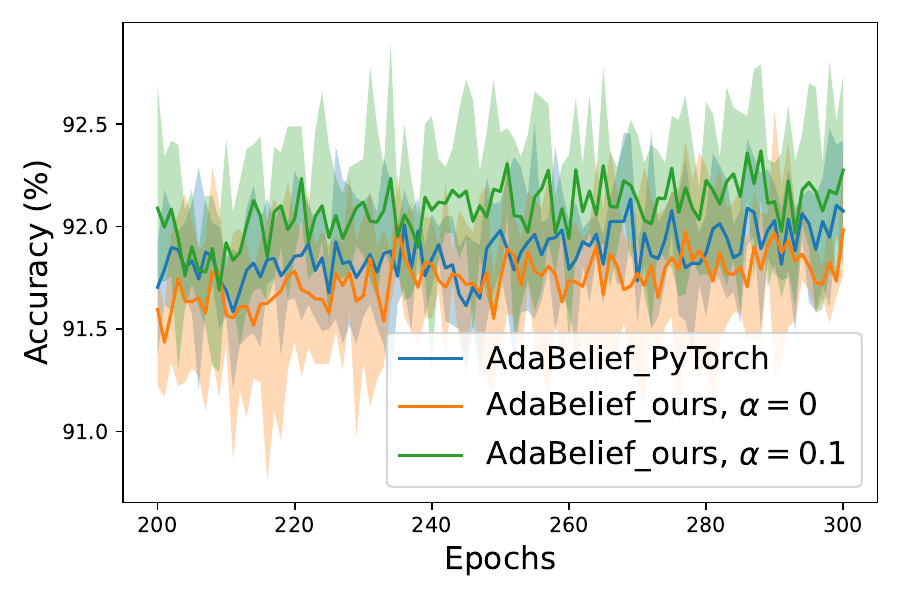}
			\label{Fig:Fig_Test2_Cifar10_AdaBelief_acclater}
			%\caption{Problem 2, SLAM}
		\end{minipage}%
	}%
	\subfigure[\scriptsize Train loss]{
		\begin{minipage}[t]{0.24\linewidth}
			\centering
			\includegraphics[width=\linewidth, height=0.12\textheight]{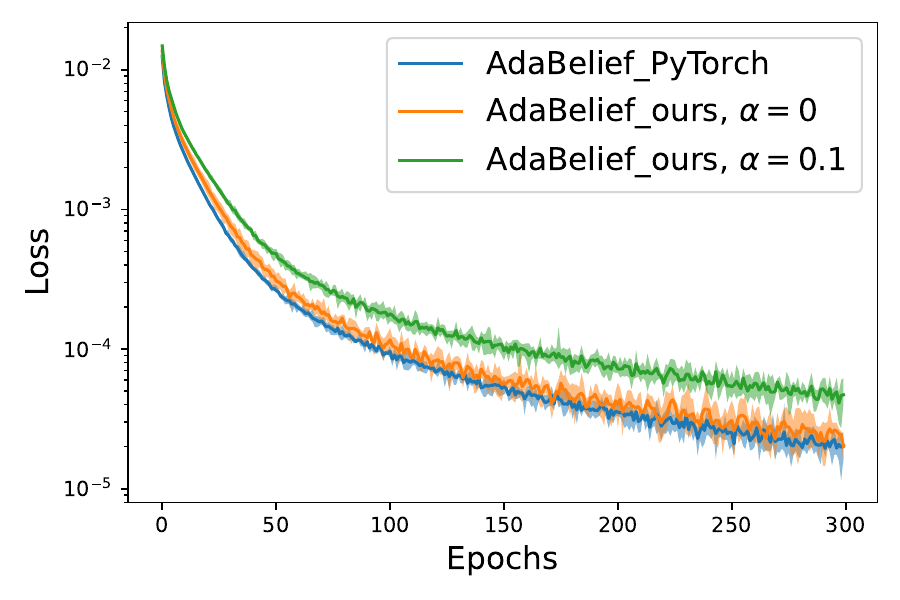}
			\label{Fig:Fig_Test2_Cifar10_AdaBelief_trainloss}
			%\caption{Problem 2, SLAM}
		\end{minipage}%
	}%
	\subfigure[\scriptsize Test loss]{
		\begin{minipage}[t]{0.24\linewidth}
			\centering
			\includegraphics[width=\linewidth, height=0.12\textheight]{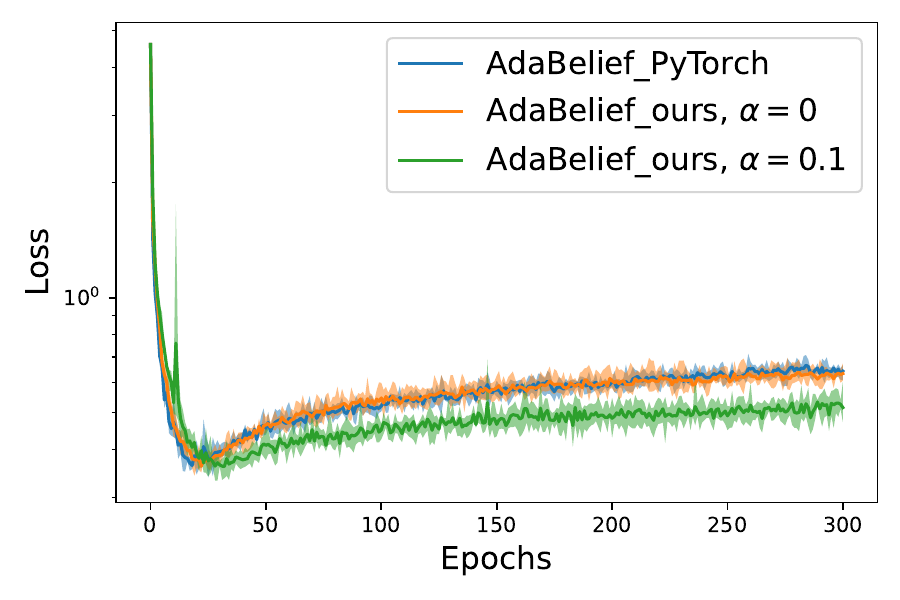}
			\label{Fig:Fig_Test2_Cifar10_AdaBelief_testloss}
			%\caption{Problem 2, SLAM}
		\end{minipage}%
	}%
	
	\subfigure[\scriptsize Test accuracy]{
		\begin{minipage}[t]{0.24\linewidth}
			\centering
			\includegraphics[width=\linewidth, height=0.12\textheight]{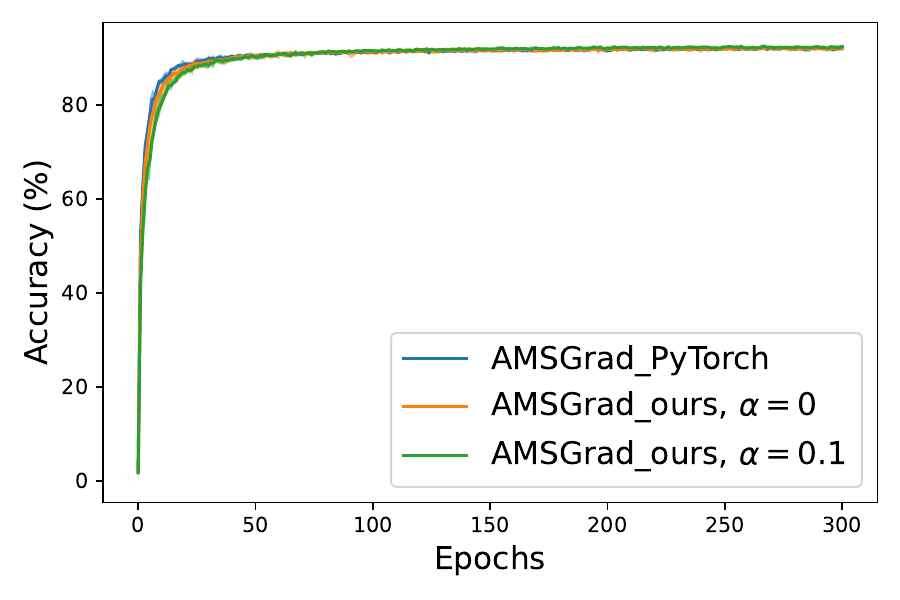}
			\label{Fig:Fig_Test2_Cifar10_AMSGrad_acc}
			%\caption{Problem 2, SLAM}
		\end{minipage}%
	}%
	\subfigure[\scriptsize Test acc. after $200$ epochs]{
		\begin{minipage}[t]{0.24\linewidth}
			\centering
			\includegraphics[width=\linewidth, height=0.12\textheight]{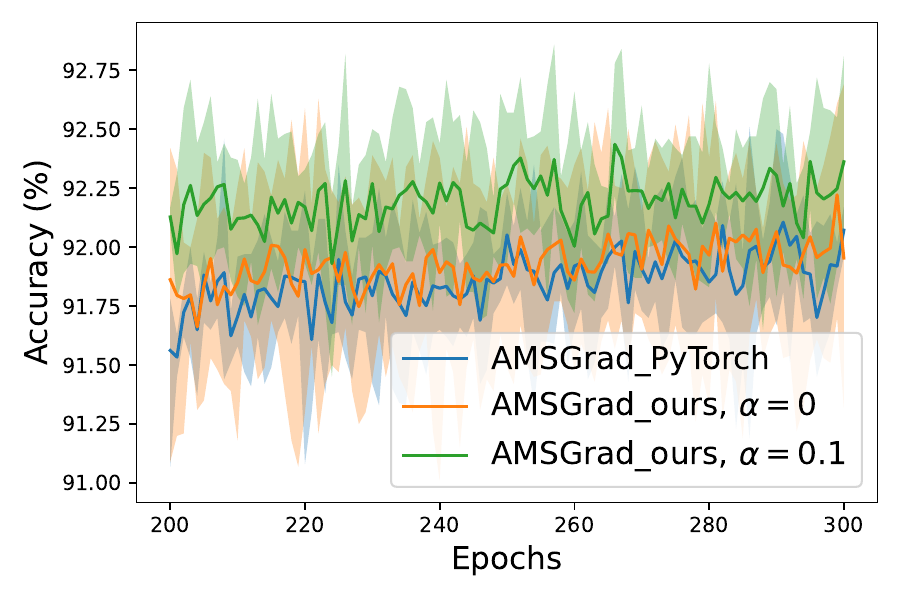}
			\label{Fig:Fig_Test2_Cifar10_AMSGrad_acclater}
			%\caption{Problem 2, SLAM}
		\end{minipage}%
	}%
	\subfigure[\scriptsize Train loss]{
		\begin{minipage}[t]{0.24\linewidth}
			\centering
			\includegraphics[width=\linewidth, height=0.12\textheight]{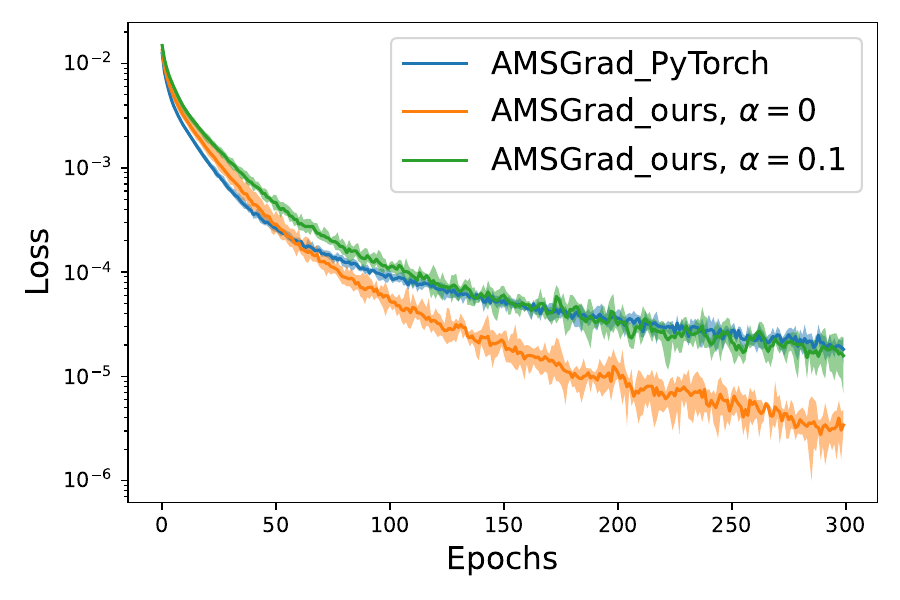}
			\label{Fig:Fig_Test2_Cifar10_AMSGrad_trainloss}
			%\caption{Problem 2, SLAM}
		\end{minipage}%
	}%
	\subfigure[\scriptsize Test loss]{
		\begin{minipage}[t]{0.24\linewidth}
			\centering
			\includegraphics[width=\linewidth, height=0.12\textheight]{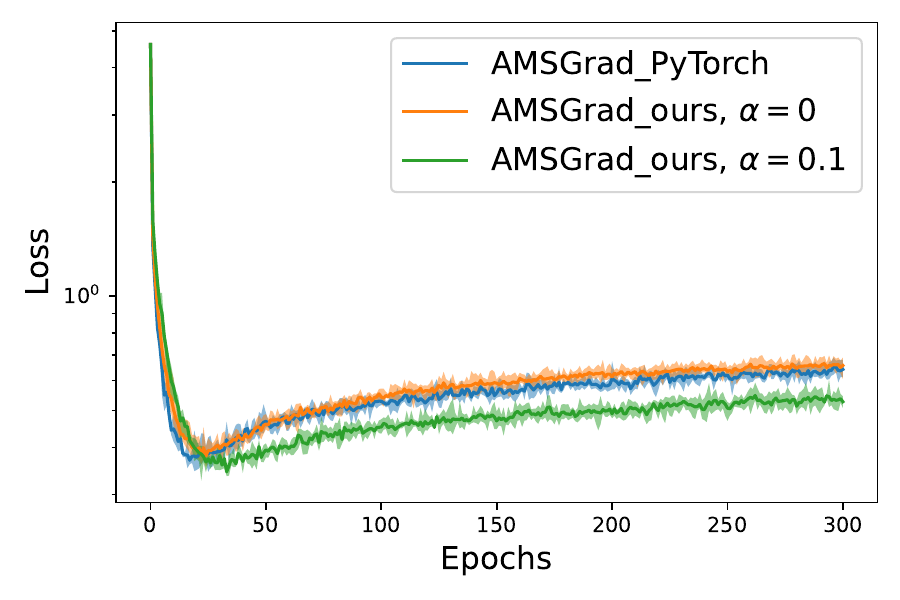}
			\label{Fig:Fig_Test2_Cifar10_AMSGrad_testloss}
			%\caption{Problem 2, SLAM}
		\end{minipage}%
	}%
	
	\subfigure[\scriptsize Test accuracy]{
		\begin{minipage}[t]{0.24\linewidth}
			\centering
			\includegraphics[width=\linewidth, height=0.12\textheight]{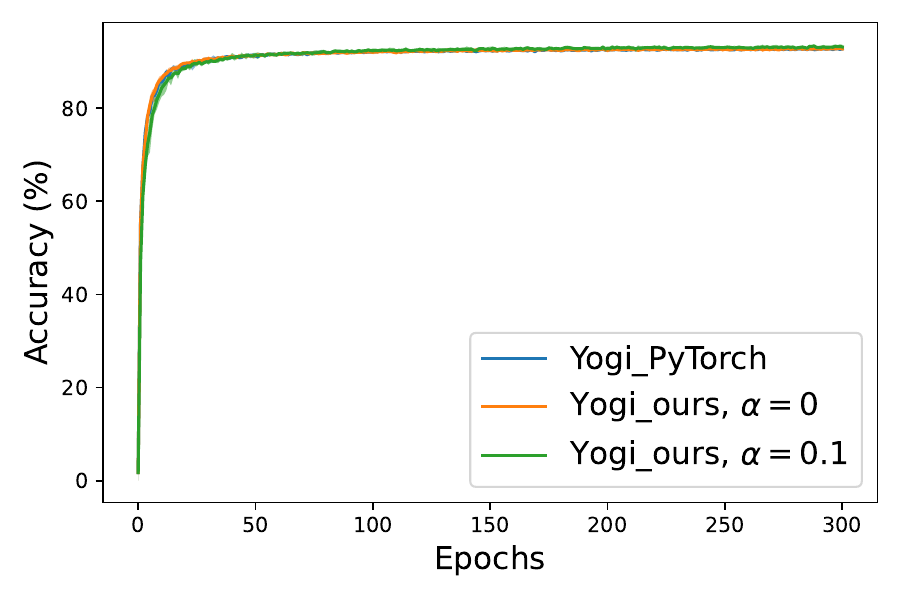}
			\label{Fig:Fig_Test2_Cifar10_Yogi_acc}
			%\caption{Problem 2, SLAM}
		\end{minipage}%
	}%
	\subfigure[\scriptsize Test acc. after $200$ epochs]{
		\begin{minipage}[t]{0.24\linewidth}
			\centering
			\includegraphics[width=\linewidth, height=0.12\textheight]{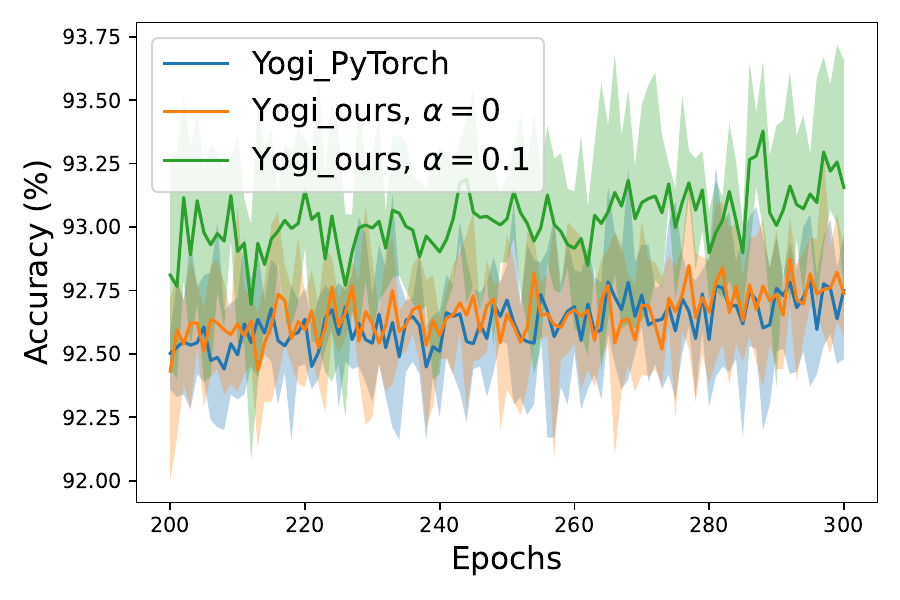}
			\label{Fig:Fig_Test2_Cifar10_Yogi_acclater}
			%\caption{Problem 2, SLAM}
		\end{minipage}%
	}%
	\subfigure[\scriptsize Train loss]{
		\begin{minipage}[t]{0.24\linewidth}
			\centering
			\includegraphics[width=\linewidth, height=0.12\textheight]{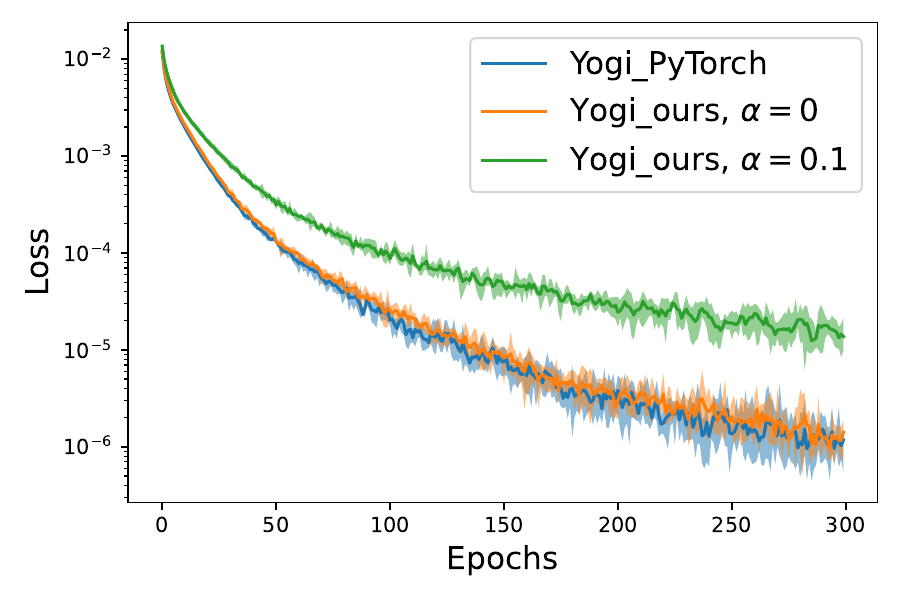}
			\label{Fig:Fig_Test2_Cifar10_Yogi_trainloss}
			%\caption{Problem 2, SLAM}
		\end{minipage}%
	}%
	\subfigure[\scriptsize Test loss]{
		\begin{minipage}[t]{0.24\linewidth}
			\centering
			\includegraphics[width=\linewidth, height=0.12\textheight]{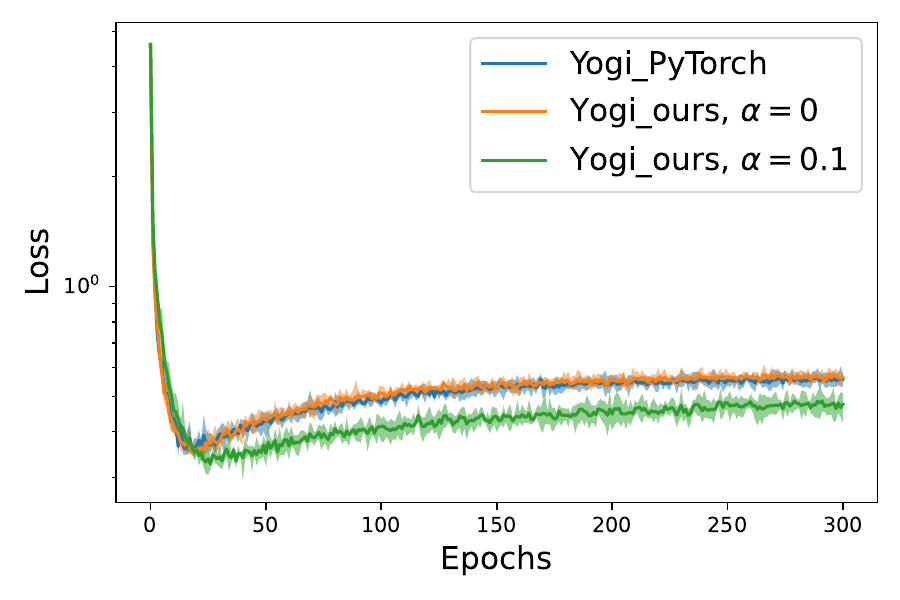}
			\label{Fig:Fig_Test2_Cifar10_Yogi_testloss}
			%\caption{Problem 2, SLAM}
		\end{minipage}%
	}%
	\caption{Test results on CIFAR-10 data set with ResNet50. Here ``acc.'' is the abbreviation of ``accuracy''.}
	\label{Fig_Test2_Cifar10}
\end{figure}

\begin{figure}[tb]
	\centering
	\subfigure[\scriptsize Test accuracy]{
		\begin{minipage}[t]{0.24\linewidth}
			\centering
			\includegraphics[width=\linewidth, height=0.12\textheight]{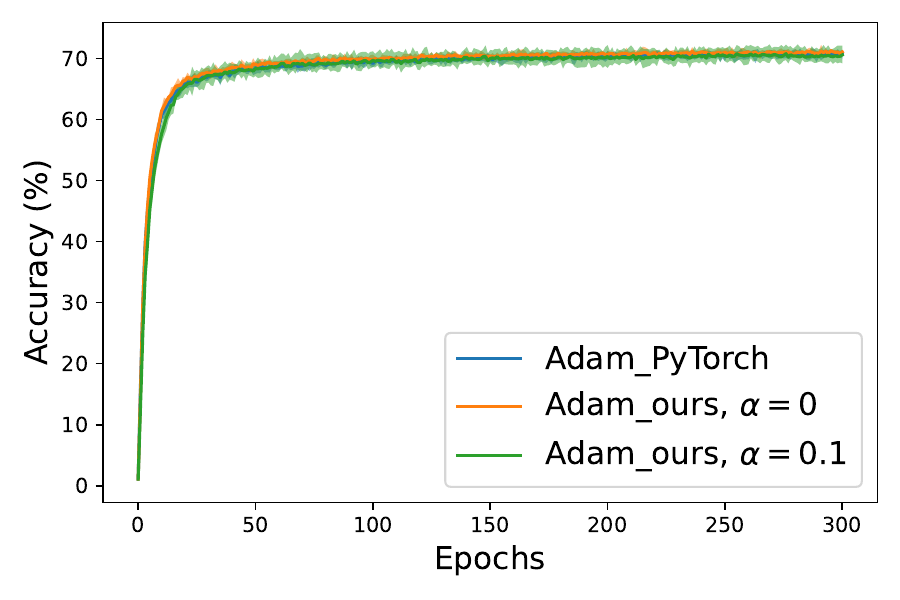}
			\label{Fig:Fig_Test2_Cifar100_Adam_acc}
			%\caption{Problem 2, SLAM}
		\end{minipage}%
	}%
	\subfigure[\scriptsize Test acc. after $200$ epochs]{
		\begin{minipage}[t]{0.24\linewidth}
			\centering
			\includegraphics[width=\linewidth, height=0.12\textheight]{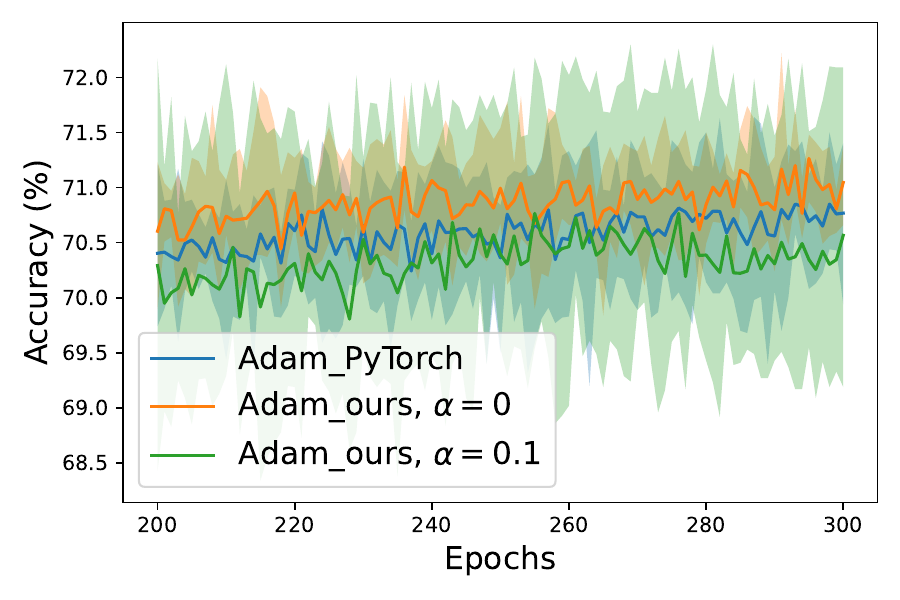}
			\label{Fig:Fig_Test2_Cifar100_Adam_acclater}
			%\caption{Problem 2, SLAM}
		\end{minipage}%
	}%
	\subfigure[\scriptsize Train loss]{
		\begin{minipage}[t]{0.24\linewidth}
			\centering
			\includegraphics[width=\linewidth, height=0.12\textheight]{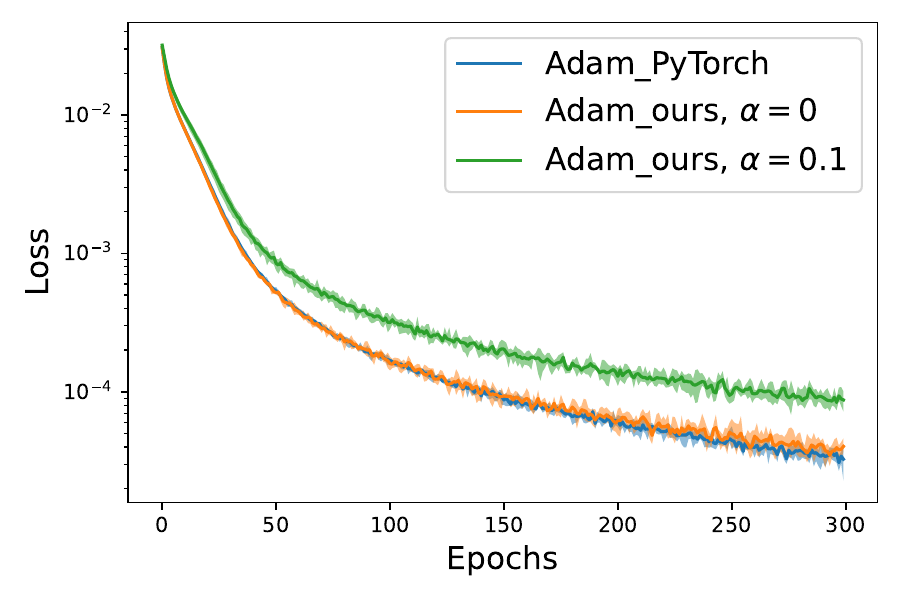}
			\label{Fig:Fig_Test2_Cifar100_Adam_trainloss}
			%\caption{Problem 2, SLAM}
		\end{minipage}%
	}%
	\subfigure[\scriptsize Test loss]{
		\begin{minipage}[t]{0.24\linewidth}
			\centering
			\includegraphics[width=\linewidth, height=0.12\textheight]{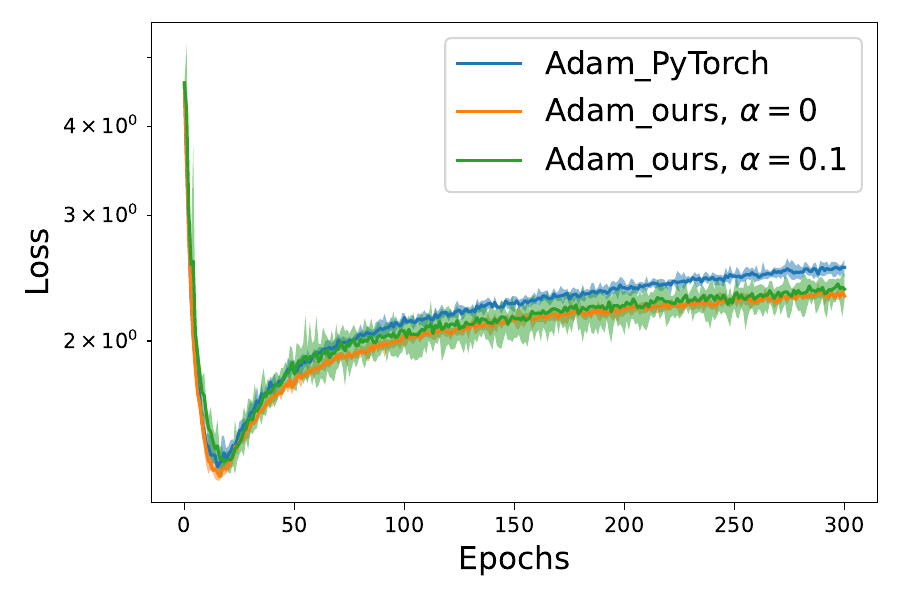}
			\label{Fig:Fig_Test2_Cifar100_Adam_testloss}
			%\caption{Problem 2, SLAM}
		\end{minipage}%
	}%

	\subfigure[\scriptsize Test accuracy]{
		\begin{minipage}[t]{0.24\linewidth}
			\centering
			\includegraphics[width=\linewidth, height=0.12\textheight]{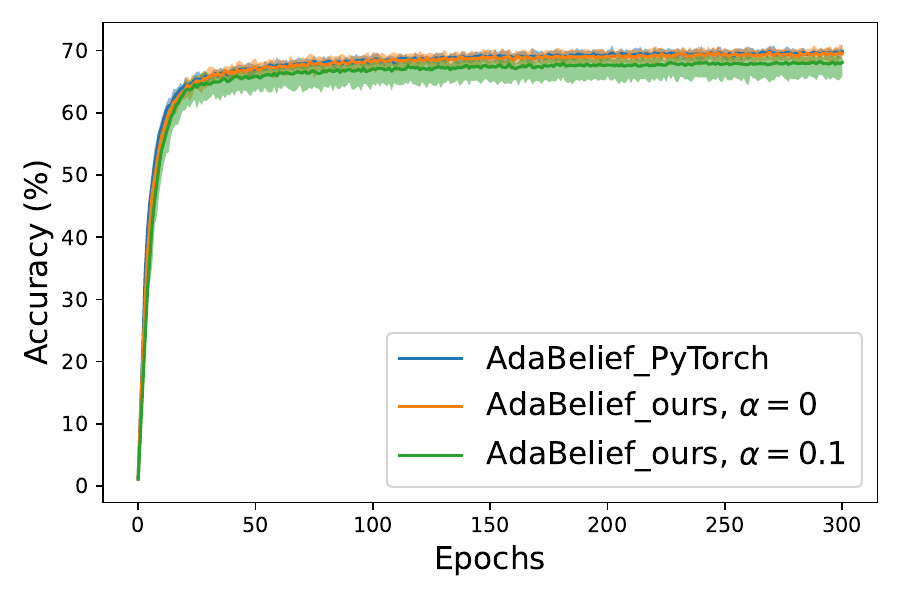}
			\label{Fig:Fig_Test2_Cifar100_AdaBelief_acc}
			%\caption{Problem 2, SLAM}
		\end{minipage}%
	}%
	\subfigure[\scriptsize Test acc. after $200$ epochs]{
		\begin{minipage}[t]{0.24\linewidth}
			\centering
			\includegraphics[width=\linewidth, height=0.12\textheight]{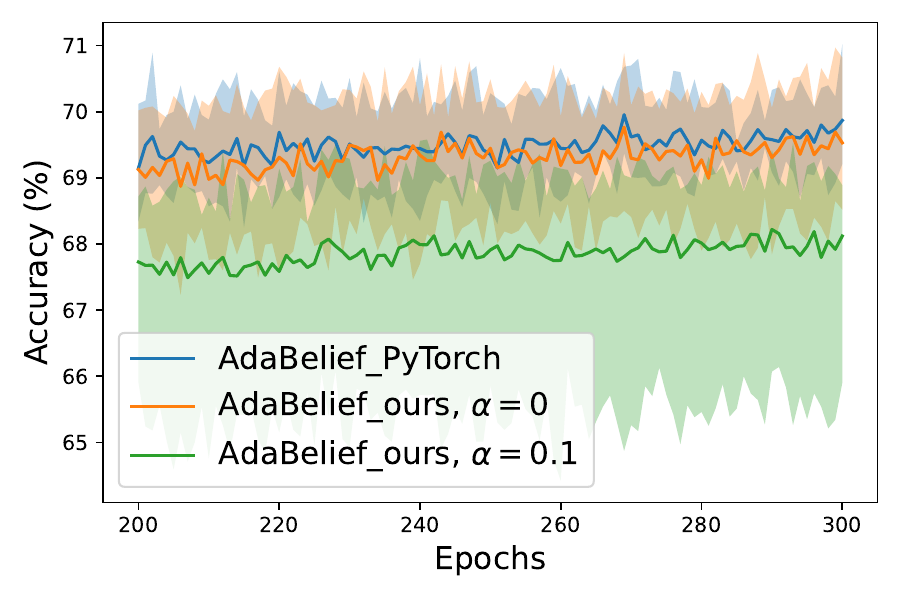}
			\label{Fig:Fig_Test2_Cifar100_AdaBelief_acclater}
			%\caption{Problem 2, SLAM}
		\end{minipage}%
	}%
	\subfigure[\scriptsize Train loss]{
		\begin{minipage}[t]{0.24\linewidth}
			\centering
			\includegraphics[width=\linewidth, height=0.12\textheight]{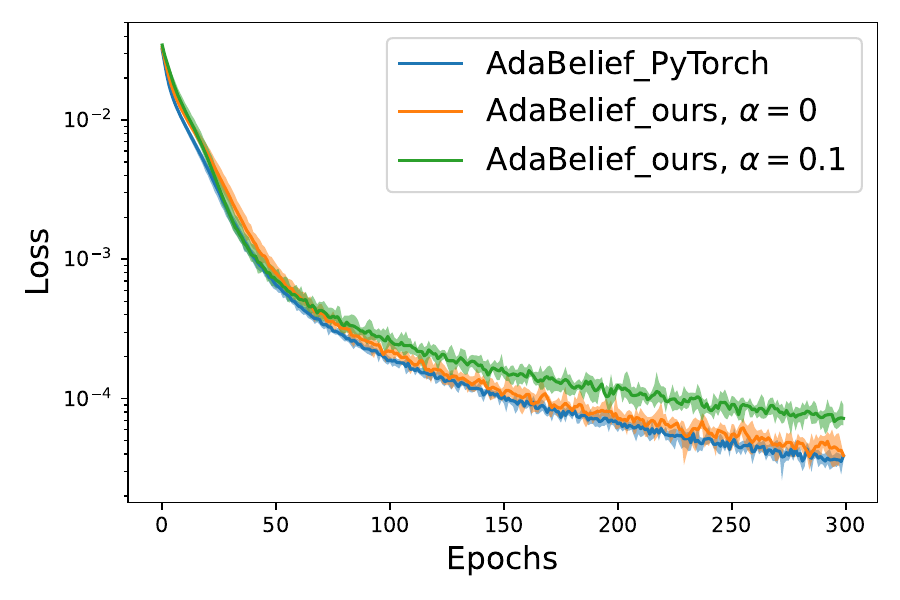}
			\label{Fig:Fig_Test2_Cifar100_AdaBelief_trainloss}
			%\caption{Problem 2, SLAM}
		\end{minipage}%
	}%
\subfigure[\scriptsize Test loss]{
		\begin{minipage}[t]{0.24\linewidth}
			\centering
			\includegraphics[width=\linewidth, height=0.12\textheight]{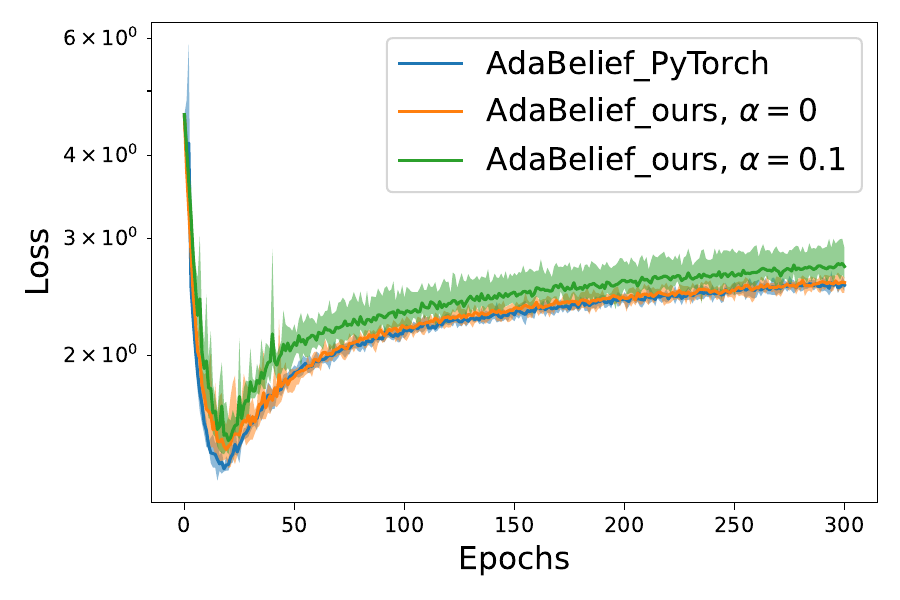}
			\label{Fig:Fig_Test2_Cifar100_AdaBelief_testloss}
			%\caption{Problem 2, SLAM}
		\end{minipage}%
	}%
	
	\subfigure[\scriptsize Test accuracy]{
		\begin{minipage}[t]{0.24\linewidth}
			\centering
			\includegraphics[width=\linewidth, height=0.12\textheight]{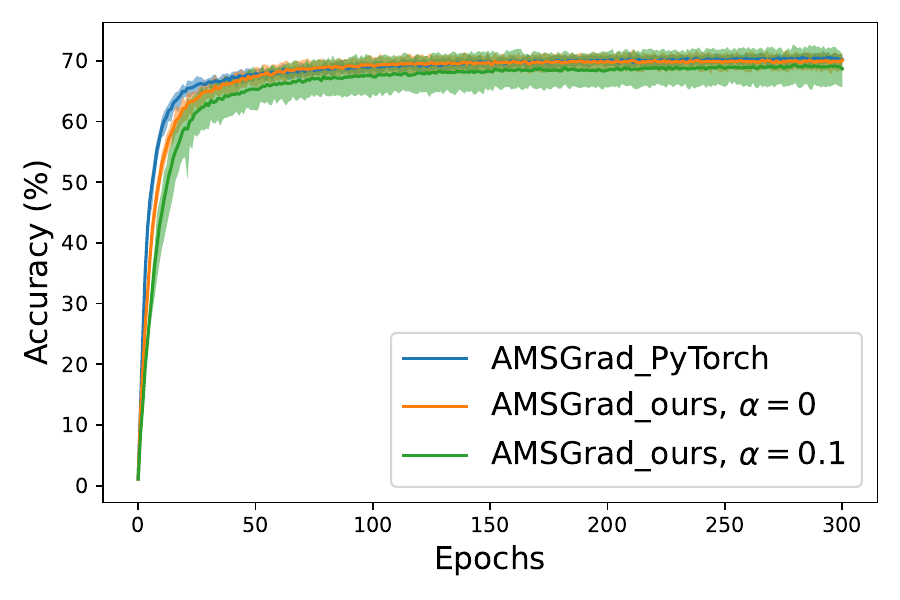}
			\label{Fig:Fig_Test2_Cifar100_AMSGrad_acc}
			%\caption{Problem 2, SLAM}
		\end{minipage}%
	}%
	\subfigure[\scriptsize Test acc. after $200$ epochs]{
		\begin{minipage}[t]{0.24\linewidth}
			\centering
			\includegraphics[width=\linewidth, height=0.12\textheight]{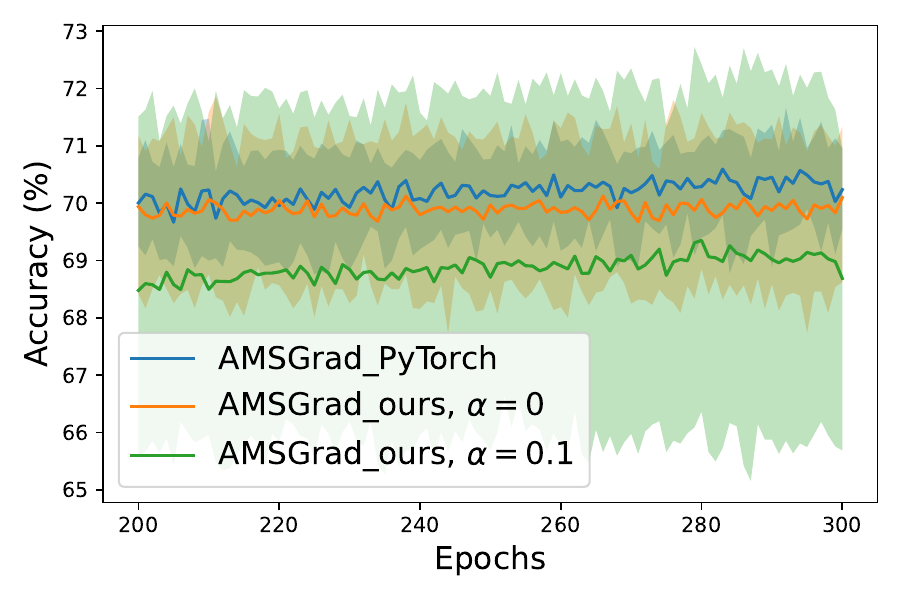}
			\label{Fig:Fig_Test2_Cifar100_AMSGrad_acclater}
			%\caption{Problem 2, SLAM}
		\end{minipage}%
	}%
	\subfigure[\scriptsize Train loss]{
		\begin{minipage}[t]{0.24\linewidth}
			\centering
			\includegraphics[width=\linewidth, height=0.12\textheight]{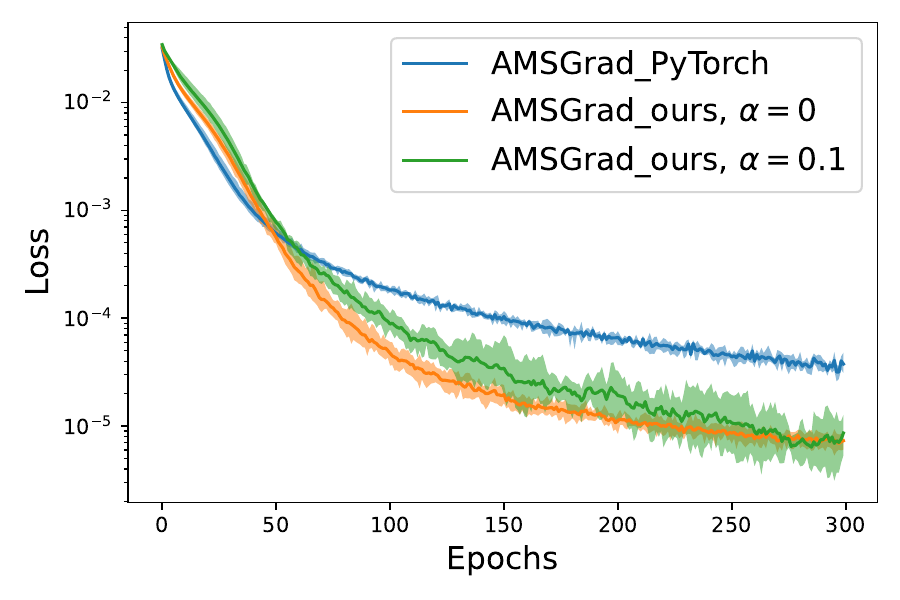}
			\label{Fig:Fig_Test2_Cifar100_AMSGrad_trainloss}
			%\caption{Problem 2, SLAM}
		\end{minipage}%
	}%
	\subfigure[\scriptsize Test loss]{
		\begin{minipage}[t]{0.24\linewidth}
			\centering
			\includegraphics[width=\linewidth, height=0.12\textheight]{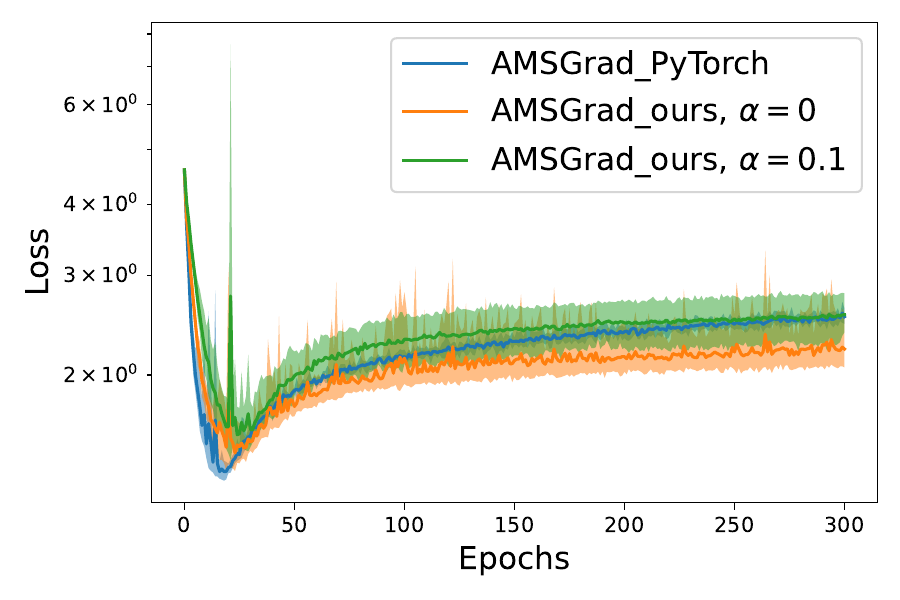}
			\label{Fig:Fig_Test2_Cifar100_AMSGrad_testloss}
			%\caption{Problem 2, SLAM}
		\end{minipage}%
	}%
	
	\subfigure[\scriptsize Test accuracy]{
		\begin{minipage}[t]{0.24\linewidth}
			\centering
			\includegraphics[width=\linewidth, height=0.12\textheight]{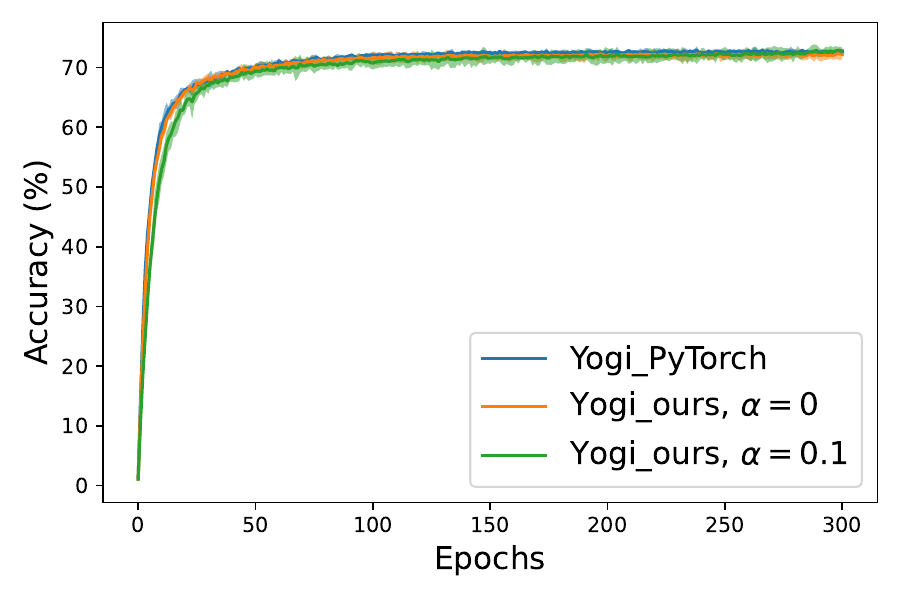}
			\label{Fig:Fig_Test2_Cifar100_Yogi_acc}
			%\caption{Problem 2, SLAM}
		\end{minipage}%
	}%
	\subfigure[\scriptsize Test acc. after $200$ epochs]{
		\begin{minipage}[t]{0.24\linewidth}
			\centering
			\includegraphics[width=\linewidth, height=0.12\textheight]{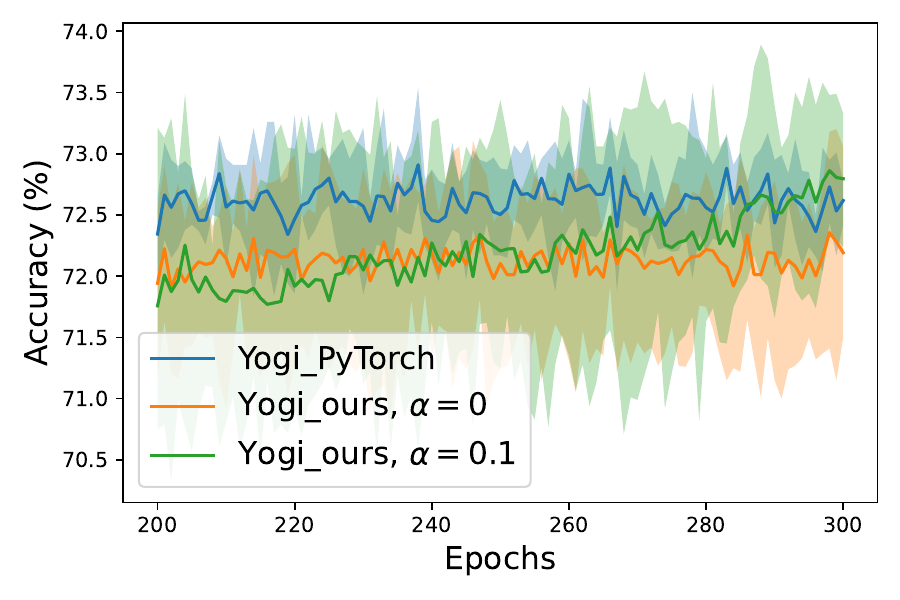}
			\label{Fig:Fig_Test2_Cifar100_Yogi_acclater}
			%\caption{Problem 2, SLAM}
		\end{minipage}%
	}%
	\subfigure[\scriptsize Train loss]{
		\begin{minipage}[t]{0.24\linewidth}
			\centering
			\includegraphics[width=\linewidth, height=0.12\textheight]{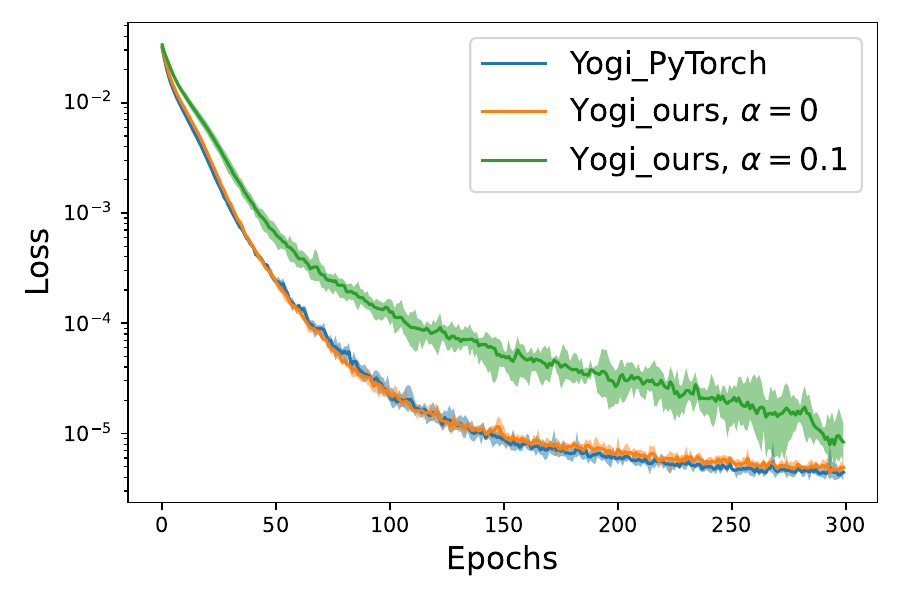}
			\label{Fig:Fig_Test2_Cifar100_Yogi_trainloss}
			%\caption{Problem 2, SLAM}
		\end{minipage}%
	}%
	\subfigure[\scriptsize Test loss]{
		\begin{minipage}[t]{0.24\linewidth}
			\centering
			\includegraphics[width=\linewidth, height=0.12\textheight]{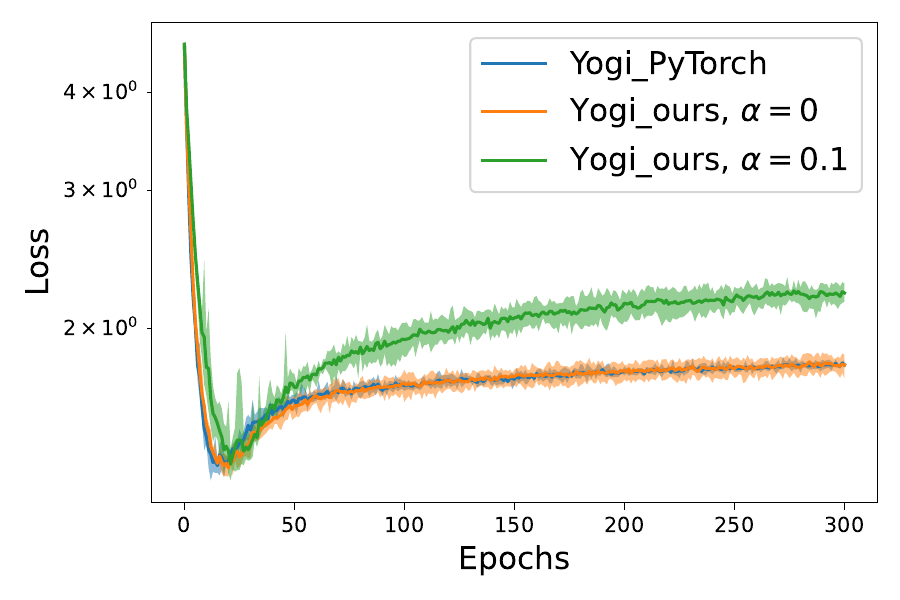}
			\label{Fig:Fig_Test2_Cifar100_Yogi_testloss}
			%\caption{Problem 2, SLAM}
		\end{minipage}%
	}%
	\caption{Test results on CIFAR-100 data set with ResNet50. Here ``acc.'' is the abbreviation of ``accuracy''. }
	\label{Fig_Test2_Cifar100}
\end{figure}

\subsection{Gradient Clipping}

In this subsection, we evaluate the numerical performance of our proposed stochastic subgradient methods with gradient clipping technique by comparing them to the optimizers provided by PyTorch. We conduct numerical experiments using the LeNet \citep{lecun1998gradient} for the classification task on the MNIST data set \citep{lecun1998mnist}.  Following the settings in \citep{grandvalet1997noise,maaten2013learning}, the training samples are randomly perturbed by noise following the Levy stable distribution, with the stability parameter of $1.1$, the skewness parameter of $1$, and the scale of $0.2$. Consequently, the imposed perturbation noise exhibits zero mean without finite second-order moment (hence it is heavy-tailed). In addition, we test the numerical performance of these compared stochastic subgradient methods in training language models \citep{vaswani2017attention}, and present the results in Appendix C.

In our numerical experiments, we set the batch size to $64$ for all test instances and select the regularization parameter $\varepsilon = 10^{-15}$ for all the Adam-family methods. Moreover, at the $k$-th epoch, we choose the stepsize as $\eta_k = \frac{\eta_0}{\sqrt{k+1}}$
 for all tested algorithms. For all compared optimizers, we choose the initial stepsize $\eta_0$ and the momentum parameters $\tau_1, \tau_2$ using the same grid search method as in Section \ref{Section_numerical_adaptive}, and retain all other parameters at their default values for the optimizers in PyTorch. We run each test instance $5$ times with different random seeds. In each test instance, all compared algorithms are tested using the same random seed and initialized with the same random weights by the default initialization function in PyTorch.

The numerical results are presented in Figure \ref{Fig_Test3_Gradient_Clipping}. These figures indicate that our proposed \ref{Eq_SGDM} and \ref{Eq_ADAMC} converge successfully and achieve high accuracy. In contrast, without gradient clipping, SGD fails to converge and Adam converges much slower than \ref{Eq_ADAMC}.  Moreover, compared with \ref{Eq_SGDM}, \ref{Eq_ADAMC} achieves improved accuracy and a faster decrease in the loss curve. Therefore, we can conclude that with the gradient clipping technique, our proposed Adam-family method \eqref{Eq_abstract_AS_framework} outperforms \ref{Eq_SGDM} and the Adam provided in PyTorch. These observations further demonstrate the great potential of our proposed stochastic subgradient methods with gradient clipping in solving \ref{Prob_Ori} in the presence of heavy-tailed noises.

\begin{figure}[tb]
	\centering
	\subfigure[Test accuracy]{
		\begin{minipage}[t]{0.33\linewidth}
			\centering
			\includegraphics[width=\linewidth, height=0.15\textheight]{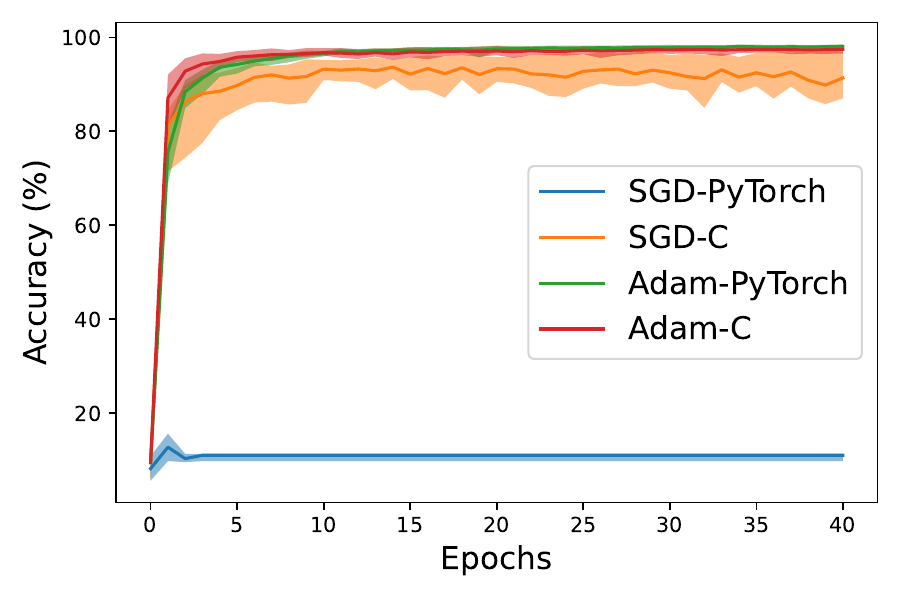}
			\label{Fig:Test3_GC_acc}
			%\caption{Problem 2, SLAM}
		\end{minipage}%
	}%
	\subfigure[Train loss]{
		\begin{minipage}[t]{0.33\linewidth}
			\centering
			\includegraphics[width=\linewidth, height=0.15\textheight]{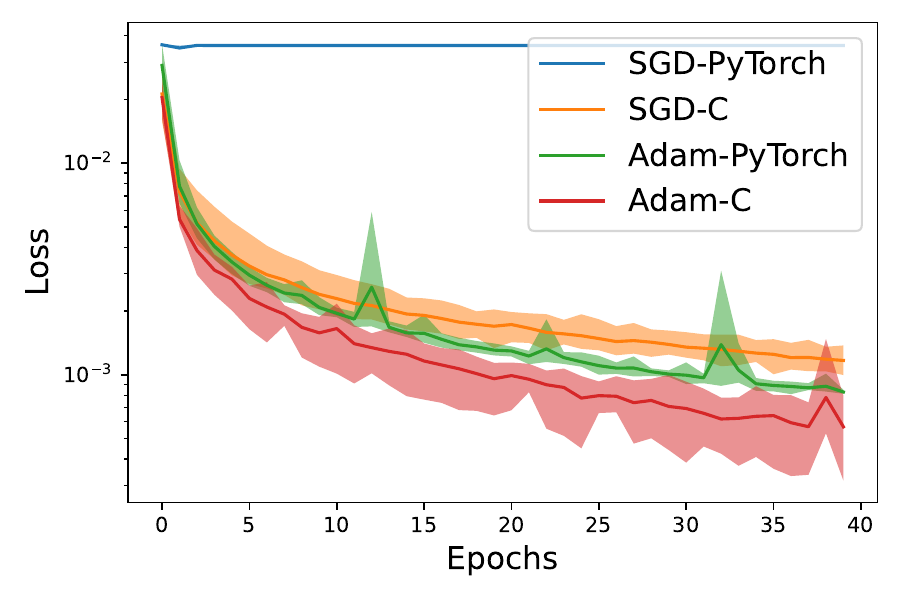}
			\label{Fig:Test3_GC_trainloss}
			%\caption{Problem 2, SLAM}
		\end{minipage}%
	}%
	\subfigure[Test loss]{
		\begin{minipage}[t]{0.33\linewidth}
			\centering
			\includegraphics[width=\linewidth, height=0.15\textheight]{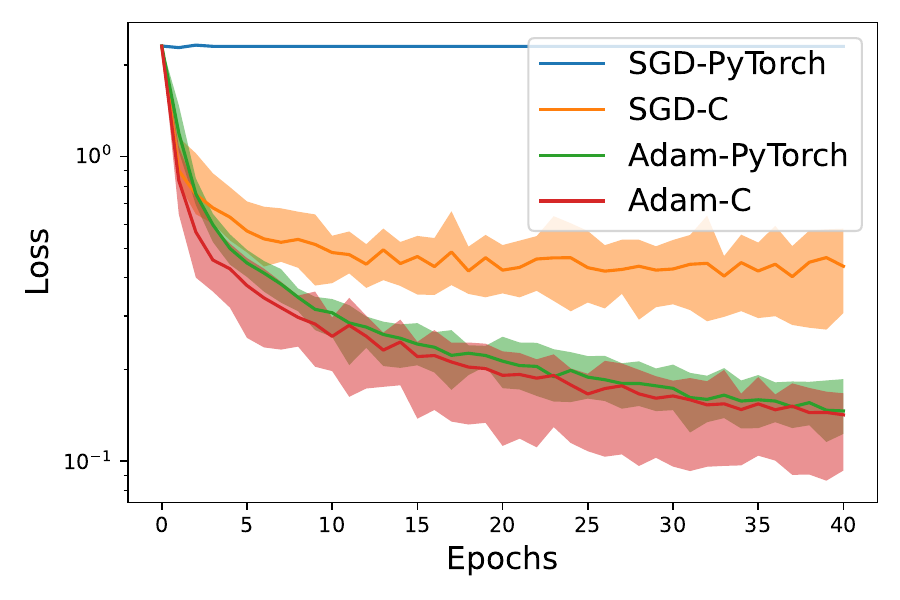}
			\label{Fig:Test3_GC_testloss}
			%\caption{Problem 2, SLAM}
		\end{minipage}%
	}%
	\caption{Test results on MNIST data set with LeNet.}
	\label{Fig_Test3_Gradient_Clipping}
\end{figure}

\section{Conclusion}

Adam-family methods are powerful tools for nonsmooth optimization, especially in training neural networks. However, as most of the neural networks are built from nonsmooth blocks, their loss functions are typically nonsmooth and not Clarke regular, thus leading to great difficulties in analyzing the convergence properties for these methods. Additionally, the presence of heavy-tailed evaluation noises in numerous applications of \ref{Prob_Ori}  poses significant challenges in designing efficient algorithms and establishing theoretical guarantees for \ref{Prob_Ori}. 

The primary contributions of this paper can be summarized as follows:
\begin{itemize}
	\item {\bf A novel framework for Adam-family methods} \\
	To establish convergence properties for Adam-family methods, we first introduce a two-timescale framework \eqref{Eq_framework} that assigns different stepsizes to the updating directions and evaluation noises, respectively. Then we establish convergence properties for \eqref{Eq_framework} in the sense of conservative field under mild assumptions.  Furthermore, we prove that under mild assumptions with almost every initialized stepsize and initial point, any cluster point of the sequence generated by our proposed framework is a Clarke stationary point of the objective function. These results provide theoretical guarantees for our proposed framework \eqref{Eq_framework}. 
	In particular, although AD algorithms may introduce spurious stationary points to \ref{Prob_Ori}, we prove that our proposed framework \eqref{Eq_framework} can avoid these spurious stationary points for almost every initial point and stepsize. 
	\item {\bf Convergence properties for Adam-family methods}\\
	We show that Adam, AdaBelief, AMSGrad, NAdam and Yogi, when equipped with diminishing stepsizes, follow our proposed framework \eqref{Eq_framework}. Consequently, through our established results for \eqref{Eq_framework}, we provide a convergence analysis for these Adam-family methods under mild assumptions in the sense of both conservative field and Clarke subdifferential. These results are applicable to a wide range of neural network training problems, hence providing convergence guarantees for the application of these Adam-family methods in training nonsmooth neural networks.
	\item {\bf Gradient clipping technique for heavy-tailed noises} \\
	We develop stochastic subgradient methods that incorporate the gradient clipping technique based on our proposed framework. Under mild assumptions and appropriately chosen clipping parameters, we show that these stochastic subgradient methods conform to our proposed framework \eqref{Eq_framework} even when the evaluation noises are only assumed to be integrable. Therefore, by employing the gradient clipping technique to tackle heavy-tailed evaluation noises, a wide range of stochastic subgradient methods can be developed with guaranteed convergence properties for solving \ref{Prob_Ori}. 
\end{itemize}

Furthermore, we conduct extensive numerical experiments to illustrate that our proposed Adam-family methods are as efficient as the widely employed Adam-family methods provided by PyTorch. Additionally, preliminary numerical experiments demonstrate the high efficiency and robustness of our proposed stochastic subgradient methods with gradient clipping in training neural networks with heavy-tailed evaluation noises. Therefore, we can conclude that our results have provided theoretical guarantees for Adam-family methods in practical settings, especially when the neural networks are nonsmooth or the evaluation noises are heavy-tailed. 

Future research questions of this work include establishing convergence rates and complexity results for Adam-family methods in minimizing nonsmooth and non-regular functions, which are extremely challenging to tackle.  Most existing works focus on SGD with the exact evaluation of the Clarke subdifferential, or only consider the convergence rate of the trajectories of the corresponding noiseless differential inclusions \citep{castera2021inertial}. To the best of our knowledge, there is no existing result for establishing the complexity for stochastic subgradient methods in the form of Algorithm \ref{Alg:ADAM} when $f$ is only assumed to be a potential function.  Furthermore, as the gradient clipping technique is widely employed in various natural language processing tasks, future works of this paper could investigate the performance of our proposed stochastic subgradient methods with gradient clipping in these real-world applications of \ref{Prob_Ori}. 

\section*{Acknowledgement}
We thank the reviewers for their valuable suggestions that have significantly contributed to the improvement of our paper.

The research of Nachuan Xiao and Kim-Chuan Toh is supported by Academic Research Fund Tier 3 grant call (MOE-2019-T3-1-010). 
The research of Xiaoyin Hu is supported by the National Natural Science Foundation of China (No. 12301408), Zhejiang Provincial Natural Science Foundation of China under Grant (No. LQ23A010002), Scientific research project of Zhejiang Provincial Education Department (Y202248716), and the advanced computing resources provided by the Supercomputing Center of HZCU. 
The research of Xin Liu is supported in part by the National Natural Science Foundation of China (12125108, 12226008, 12021001, 12288201, 11991021), Key Research Program of Frontier Sciences, Chinese Academy of Sciences (ZDBS-LY-7022), and CAS AMSS-PolyU Joint Laboratory of Applied Mathematics.

\appendix

\section{Proof for Proposition \ref{Prop_Clipping_mk_uniformly_bounded}}
In this section, we present the proof for Proposition \ref{Prop_Clipping_mk_uniformly_bounded}. We begin our proof with the following auxiliary proposition, which employs a similar proof technique as \citep[Proposition 4.10]{xiao2023convergence}. 

\begin{prop}
	\label{Prop_clipping_noise_estimation}
	Suppose $\{\xi_k\}$ is a sequence of uniformly bounded martingale difference sequence, $\{\eta_k\}$ and $\{C_k\}$ are positive sequences that satisfies 
    \begin{equation*}
        \lim_{k\to +\infty} \eta_k = 0,\quad \lim_{k\to +\infty} C_k = +\infty, \quad \text{and} \quad  \lim_{k\to +\infty} C_k^2 \eta_k \log(k) =0. 
    \end{equation*}
    Then almost surely, it holds that 
	\begin{equation*}
		\lim_{k \to +\infty}  \sum_{i = 
			0}^{k}\left(\eta_{i}\prod_{ j=i+1}^k(1- \eta_j) \right)C_i \xi_{i} = 0. 
	\end{equation*}
\end{prop}
\begin{proof}
	Let $z_k =   \sum_{i = 
		0}^{k}\left(\eta_{i}\prod_{ j=i+1}^k(1- \eta_j) \right)C_i \xi_{i}$ and $z_{0} = 0$, $\rho_{k, i} :=  C_i\eta_{i}\prod_{ j=i+1}^k(1- \eta_j)$ and $\rho_{k,k} := C_k \eta_k$, then there exists $K>0$ such that $|\rho_{k, i}| \leq C_i \eta_i$ holds for any $k\geq i\geq K$. Without loss of generality, we assume that $\rho_{k,i} \geq 0$ holds for any $k\geq i\geq K$. Moreover, from the expression of $z_k$, we can conclude that 
	\begin{equation*}
		z_k = \sum_{i = 0}^k \rho_{k,i} \xi_{i}. 
	\end{equation*}
	
	Since the martingale difference sequence $\{\xi_{m,k}\}$ is  uniformly bounded, it holds that $\xi_{m,k}$ is sub-Gaussian for any $k\geq 0$. Then there exists a constant $M>0$ such that for any $w \in \Rn$, it holds for any $k\geq 0$ that 
	\begin{equation*}
		\bb{E}\left[ \exp\left( \inner{w, \xi_{m, k+1}} \right) | \ca{F}_k \right] \leq \exp\left( \frac{M}{2}\norm{w}^2 \right).
	\end{equation*}
	Therefore, for any $s>K$, $T>0$,  $w \in \Rn$ and any $C > 0$, let 
	\begin{equation*}
		Z_i := \exp\left[  \inner{Cw, \sum_{k = s}^i \rho_{\Lambda(\lambda_s + T), k} \xi_{m, k}} - \frac{MC^2}{2}\sum_{k = s}^i \rho_{\Lambda(\lambda_s + T), k}^2 \norm{w}^2  \right],
	\end{equation*}
	where $\Lambda(0) := 0$, $\Lambda(i)  := \sum_{k = 0}^{i-1} \eta_k$, and $\Lambda(t) := \sup  \{k \geq 0: t\geq \Lambda(k)\} $. 
	Then  for any $i\geq s$, we have that $\bb{E}[Z_{i+1} | \ca{F}_i] \leq Z_{i}$. 
	Hence for any $\delta > 0$, and any $C > 0$, it holds that 
	\begin{equation*}
		\begin{aligned}
			&\bb{P}\left( \sup_{s\leq i \leq \Lambda(\lambda_s + T)} \inner{w, \sum_{k = s}^i \rho_{\Lambda(\lambda_s + T), k} \xi_{k}} > \delta \right)\\
			={}&\bb{P}\left( \sup_{s\leq i \leq \Lambda(\lambda_s + T)} \inner{Cw, \sum_{k = s}^i \rho_{\Lambda(\lambda_s + T), k} \xi_{k}} > C\delta \right)\\
			\leq{}& \bb{P}\left( \sup_{s\leq i \leq \Lambda(\lambda_s + T)} Z_i > \exp\left( C\delta - \frac{MC^2}{2} \sum_{k = s}^{\Lambda(\lambda_s + T)} \rho_{\Lambda(\lambda_s + T), k}^2 \norm{w}^2 \right) \right)\\
			\leq{}& \exp\left( \left(\frac{M}{2}\norm{w}^2 \sum_{k = s}^{\Lambda(\lambda_s + T)} \rho_{\Lambda(\lambda_s + T), k}^2\right)C^2   - \delta C \right).
		\end{aligned}
	\end{equation*}
	Here the second inequality holds since $\{Z_i\}$ is nonnegative, $\bb{E}[Z_{i+1}|\ca{F}_{i}] \leq Z_i$ holds for any $i\geq 0$ and $\bb{E}[Z_{s}] \leq 1$. Then from the arbitrariness of $C$, we can set $C = \frac{\delta}{M\norm{w}^2 \sum_{k = s}^{\Lambda(\lambda_s + T)}\rho_{\Lambda(\lambda_s + T), k}^2}$ to obtain that 
	\begin{equation*}
		\bb{P}\left( \sup_{s\leq i \leq \Lambda(\lambda_s + T)} \inner{w, \sum_{k = s}^i \rho_{\Lambda(\lambda_s + T), k} \xi_{k}} > \delta \right) \leq  \exp \left(\frac{-\delta^2}{2M\norm{w}^2 \sum_{k = s}^{\Lambda(\lambda_s + T)}\rho_{\Lambda(\lambda_s + T), k}^2 }\right). 
	\end{equation*}
	From the arbitrariness of $w$ and the fact that $\rho_{\Lambda(\lambda_s + T), k} \leq \eta_k$, we can deduce that there exists constants $C_1, C_2$ that only depend on $n$, such that 
	\begin{equation*}
		\begin{aligned}
			& \bb{P}\left( \sup_{s\leq i \leq \Lambda(\lambda_s + T)} \norm{\sum_{k = s}^i \rho_{\Lambda(\lambda_s + T), k} \xi_{m,k}} > \delta \right) \\
			\leq{}& C_1\exp\left(\frac{-\delta^2}{2C_2M\sum_{k = s}^{\Lambda(\lambda_s + T)} \rho_{\Lambda(\lambda_s + T), k}^2 }\right) 
			\leq C_1\exp\left(\frac{-\delta^2}{2C_2M\sum_{k = s}^{\Lambda(\lambda_s + T)} C_k^2\eta_k^2 }\right) \\
                \leq{}& C_1\exp\left(\frac{-\delta^2}{2C_2M\eta_{k'} C_{k'}^2\sum_{k = s}^{\Lambda(\lambda_s + T)} \eta_k }\right) 
                \leq  \exp\left(\frac{-\delta^2}{2MT\eta_{k'}C_{k'}^2 }\right),
		\end{aligned}
	\end{equation*}
	holds for some $k' \in [s, \Lambda(\lambda_s + T)]$. 
	
	Therefore, for any $j \geq 0$, there exists $k_j\in [\Lambda(jT), \Lambda((j+1)T) ]$, such that
	\begin{equation}
		\label{Eq_Prop_twotimescale_noise_estimation_0}
		\begin{aligned}
			&\sum_{j=0}^{+\infty}\bb{P}\left( \sup_{\Lambda(jT)\leq i \leq \Lambda(jT + T)}  \norm{\sum_{k = \Lambda(jT)}^i \rho_{\Lambda(jT + T), k }\xi_{k}} > \delta \right) \\
			\leq{}& \sum_{j=0}^{+\infty} \exp\left( \frac{-\delta^2}{2MT \eta_{k_j}C_{k_j}^2} \right) \leq \sum_{k=0}^{+\infty} \exp\left( \frac{-\delta^2}{2MT \eta_k C_k^2} \right) < +\infty. 
		\end{aligned}
	\end{equation}
	Here the last inequality holds from the fact that $\lim_{k \to +\infty} \eta_k  C_k^2\log(k) = 0 $. 
 
	Therefore, let $\ca{E}_j$ denote the event $\left\{\sup_{\Lambda(jT)\leq i \leq \Lambda( jT+T)}\norm{ \sum_{k = \Lambda(jT)}^{i}\rho_{\Lambda(jT + T), k} \xi_{k}} > \delta \right\}$. From  the Borel-Cantelli lemma and \eqref{Eq_Prop_twotimescale_noise_estimation_0}, we can conclude that $ \bb{P}\left( \lim_{j\to +\infty} \cap_{j = 1}^{+\infty} \cup_{l = j}^{+\infty} \ca{E}_l  \right) = 0$. Therefore, we can conclude that, almost surely, 
	\begin{equation*}
		\lim_{j\to +\infty} \sup_{\Lambda(jT)\leq i \leq \Lambda( jT+T)}\norm{ \sum_{k = \Lambda(jT)}^{i}\rho_{\Lambda(jT + T), k} \xi_{k}} \leq \delta. 
	\end{equation*}
	Then the arbitrariness of $\delta$ illustrates that, almost surely, we have
	\begin{equation}
		\label{Eq_Prop_twotimescale_noise_estimation_1}
		\lim_{j \to +\infty} \sup_{\Lambda(jT)\leq i \leq \Lambda( jT+T)}\norm{ \sum_{k = \Lambda(jT)}^{i}\rho_{\Lambda(jT + T), k} \xi_{k}} = 0. 
	\end{equation}
	Notice that for any $j\geq 0$ such that $\Lambda(jT) \geq K$, it holds that 
	\begin{equation*}
		z_{\Lambda( jT+T)} = \left(\prod_{k = \Lambda( jT)}^{\Lambda( jT+T)} (1-\eta_k)\right) z_{\Lambda( jT)} + \sum_{k = \Lambda( jT)}^{\Lambda( jT+T)}  \rho_{\Lambda(jT + T), k} \xi_{k},
	\end{equation*}
	which illustrates that almost surely, 
	\begin{equation}
		\label{Eq_Prop_twotimescale_noise_estimation_2}
		\norm{z_{\Lambda( jT+T)}} \leq \exp(-T) \norm{z_{\Lambda( jT)}} + \norm{\sum_{k = \Lambda( jT)}^{\Lambda( jT+T)}  \rho_{\Lambda(jT + T), k} \xi_{m,k}}, 
	\end{equation}
        hence $\lim_{j\to +\infty} \norm{z_{\Lambda( jT)}} = 0$. 

        Finally, for any $i$ such that $\Lambda( jT) \leq i \leq \Lambda( jT+T)$, it holds that 
        \begin{equation*}
            \begin{aligned}
                &\norm{z_{\Lambda( jT+T)}} = \norm{ \left(\prod_{k = i}^{\Lambda( jT+T)} (1-\eta_k)\right) z_{i} + \sum_{k = i}^{\Lambda( jT+T)}  \rho_{\Lambda(jT + T), k} \xi_{k} } \\
                \geq{}& \exp(-T) \norm{z_i} - \norm{\sum_{k = i}^{\Lambda( jT+T)}  \rho_{\Lambda(jT + T), k} \xi_{k} } \\
                \geq{}& \exp(-T) \norm{z_i} - 2 \sup_{\Lambda( jT)\leq i \leq \Lambda(\lambda_s + T)} \norm{\sum_{k = \Lambda( jT)}^i \rho_{\Lambda(\lambda_s + T), k} \xi_{k}}.
            \end{aligned}
        \end{equation*}
        As a result, we have 
        \begin{equation}
            \label{Eq_Prop_twotimescale_noise_estimation_3}
            \begin{aligned}
                &\sup_{\Lambda( jT) \leq i \leq \Lambda( jT+T)} \norm{z_{i}} \\
                \leq{}& \exp(T) \left( \norm{z_{\Lambda( jT)}} + 2  \sup_{\Lambda( jT)\leq i \leq \Lambda(\lambda_s + T)} \norm{\sum_{k = \Lambda( jT)}^i \rho_{\Lambda(\lambda_s + T), k} \xi_{k}} \right). 
            \end{aligned}
        \end{equation}
	Combining \eqref{Eq_Prop_twotimescale_noise_estimation_1}, \eqref{Eq_Prop_twotimescale_noise_estimation_2}, and \eqref{Eq_Prop_twotimescale_noise_estimation_3} together, we achieve that, 
	\begin{equation*}
		\begin{aligned}
		    &\mathop{\lim\sup}_{k \to +\infty} \norm{z_k} = \lim_{j\to +\infty} \sup_{\Lambda( jT) \leq i \leq \Lambda( jT+T)} \norm{z_{i}} \\
                \leq {}& \exp(T)\lim_{j \to +\infty}\norm{z_{\Lambda( jT)}} + 2\exp(T)\lim_{j\to +\infty}  \sup_{\Lambda( jT)\leq i \leq \Lambda(\lambda_s + T)} \norm{\sum_{k = \Lambda( jT)}^i \rho_{\Lambda(\lambda_s + T), k} \xi_{k}}\\
                ={}& 0. 
		\end{aligned}
	\end{equation*}
	holds almost surely.  This completes the proof.

\end{proof}

With Proposition \ref{Prop_clipping_noise_estimation}, we now present the proof for Proposition \ref{Prop_Clipping_mk_uniformly_bounded}. \newline
\textbf{Proof of Proposition \ref{Prop_Clipping_mk_uniformly_bounded}:}

    For any $k \geq 0$, the $\mkp$ in \ref{Eq_SGDM} can be expressed as 
    \begin{equation*}
        \begin{aligned}
            &\mkp = \sum_{i = 0}^k  \left(\prod_{j = i+1}^k (1-\tau_1 \eta_j) \right) \tau_1 \eta_i \hat{g}_i\\
            ={}& \sum_{i = 0}^k  \left(\prod_{j = i+1}^k (1-\tau_1 \eta_j) \right) \tau_1 \eta_i (d_i + C_i \xi_i)\\
            ={}& \sum_{i = 0}^k  \left(\prod_{j = i+1}^k (1-\tau_1 \eta_j) \right) \tau_1 \eta_i d_i + \sum_{i = 0}^k  \left(\prod_{j = i+1}^k (1-\tau_1 \eta_j) \right) \tau_1 \eta_i C_i\xi_i. 
        \end{aligned}
    \end{equation*}
    Here we set $\prod_{i+1}^i (1-\tau_1 \eta_j) = 1$ for simplicity. 
    
    As illustrated in Proposition \ref{Prop_clipping_noise_estimation}, almost surely, it holds that  
    \begin{equation*}
        \lim_{k\to +\infty} \sum_{i = 0}^k  \left(\prod_{j = i+1}^k (1-\tau_1 \eta_j) \right) \tau_1 \eta_i C_i\xi_i = 0,
    \end{equation*}
    which implies that  $\sup_{k\geq  0}\norm{\sum_{i = 0}^k  \left(\prod_{j = i+1}^k (1-\tau_1 \eta_j) \right) \tau_1 \eta_i C_i\xi_i} < +\infty$. 
    
    On the other hand, Lemma \ref{Le_clipping_convergence_dk} illustrates that there exists a nonnegative diminishing sequence $\{\delta_k\}$ such that  $d_k \in \D_f^{\delta_k}(\xk)$ holds for any $k\geq 0$. Then from the local boundedness of $\D_f$ and the uniform boundedness of the sequence $\{\xk\}$, we have that $\sup_{k\geq 0} \norm{d_k} < +\infty$ holds almost surely. 
    Then for any $k > 0$, it holds that 
    \begin{equation*}
        \norm{\sum_{i = 0}^k  \left(\prod_{j = i+1}^k (1-\tau_1 \eta_j) \right) \tau_1 \eta_i d_i} \leq \sum_{i = 0}^k  \left(\prod_{j = i+1}^k (1-\tau_1 \eta_j) \right) \tau_1 \eta_i\norm{ d_i} \leq \sup_{ 0\leq i \leq k} \norm{d_i}. 
    \end{equation*}
    Then we can conclude that 
    \begin{equation*}
        \sup_{k\geq 0} \norm{\mk} \leq \sup_{k\geq 0} \norm{d_k} + \sup_{k\geq  0}\norm{\sum_{i = 0}^k  \left(\prod_{j = i+1}^k (1-\tau_1 \eta_j) \right) \tau_1 \eta_i C_i\xi_i} <+\infty. 
    \end{equation*}
    This completes the proof of Proposition \ref{Prop_Clipping_mk_uniformly_bounded}.

\section{Supplementary Numerical Experiments for Section 6.1}

\begin{figure}[tb]
	\centering
	\subfigure[Test accuracy, CIFAR-10]{
		\begin{minipage}[t]{0.48\linewidth}
			\centering
			\includegraphics[width=\linewidth, height=0.20\textheight]{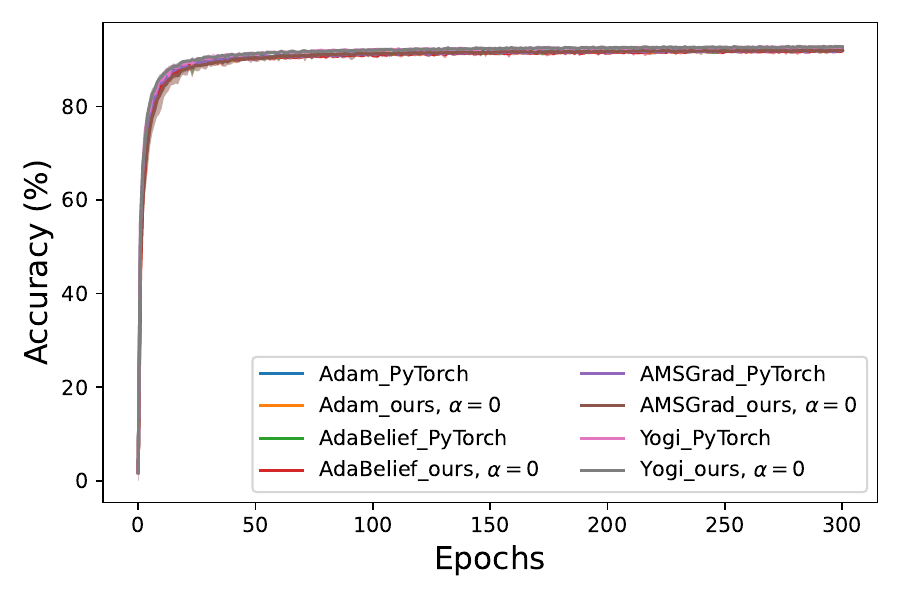}
			\label{Fig:Test4_cifar10_acc}
			%\caption{Problem 2, SLAM}
		\end{minipage}%
	}%
        \subfigure[Test accuracy, CIFAR-100]{
		\begin{minipage}[t]{0.48\linewidth}
			\centering
			\includegraphics[width=\linewidth, height=0.20\textheight]{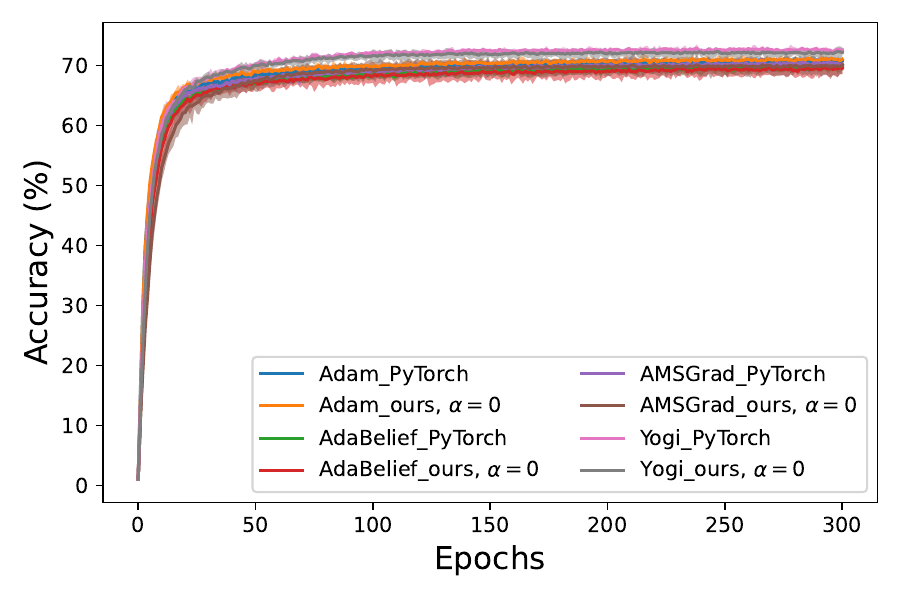}
			\label{Fig:Test4_cifar100_acc}
			%\caption{Problem 2, SLAM}
		\end{minipage}%
	}%
 
	\subfigure[Train loss, CIFAR-10]{
		\begin{minipage}[t]{0.48\linewidth}
			\centering
			\includegraphics[width=\linewidth, height=0.20\textheight]{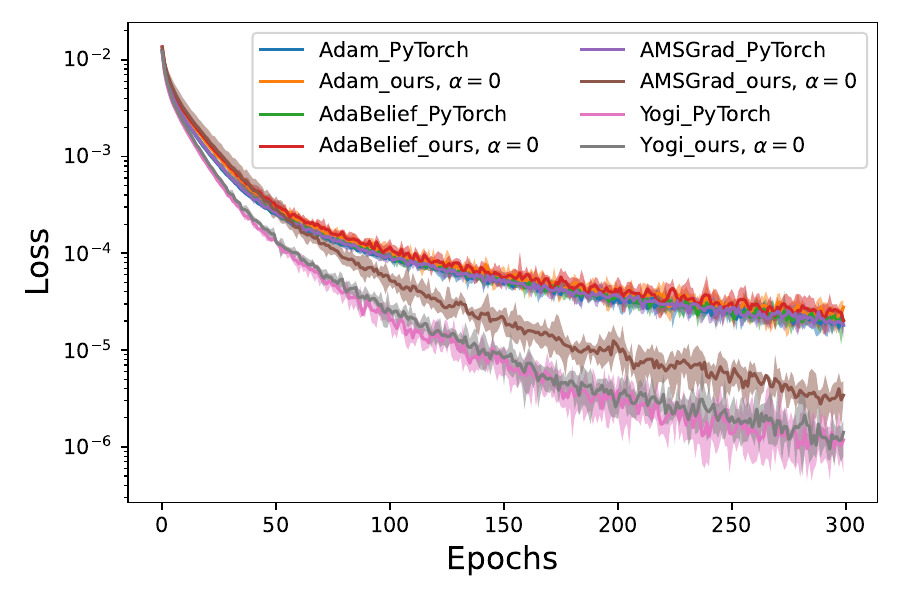}
			\label{Fig:Test4_cifar10_trainloss}
			%\caption{Problem 2, SLAM}
		\end{minipage}%
	}%
        \subfigure[Train loss, CIFAR-100]{
		\begin{minipage}[t]{0.48\linewidth}
			\centering
			\includegraphics[width=\linewidth, height=0.20\textheight]{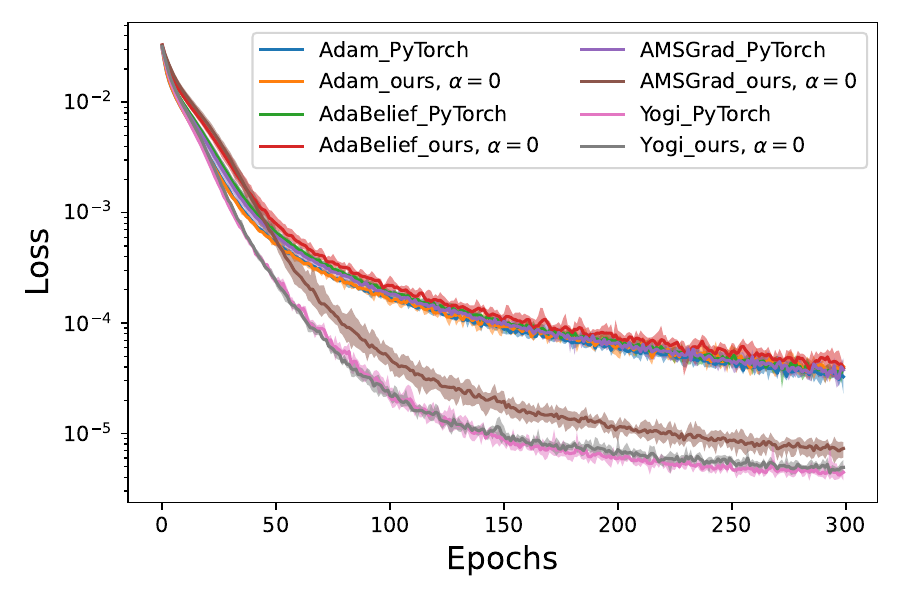}
			\label{Fig:Test4_cifar100_trainloss}
			%\caption{Problem 2, SLAM}
		\end{minipage}%
	}%
 
	\subfigure[Test loss, CIFAR-10]{
		\begin{minipage}[t]{0.48\linewidth}
			\centering
			\includegraphics[width=\linewidth, height=0.20\textheight]{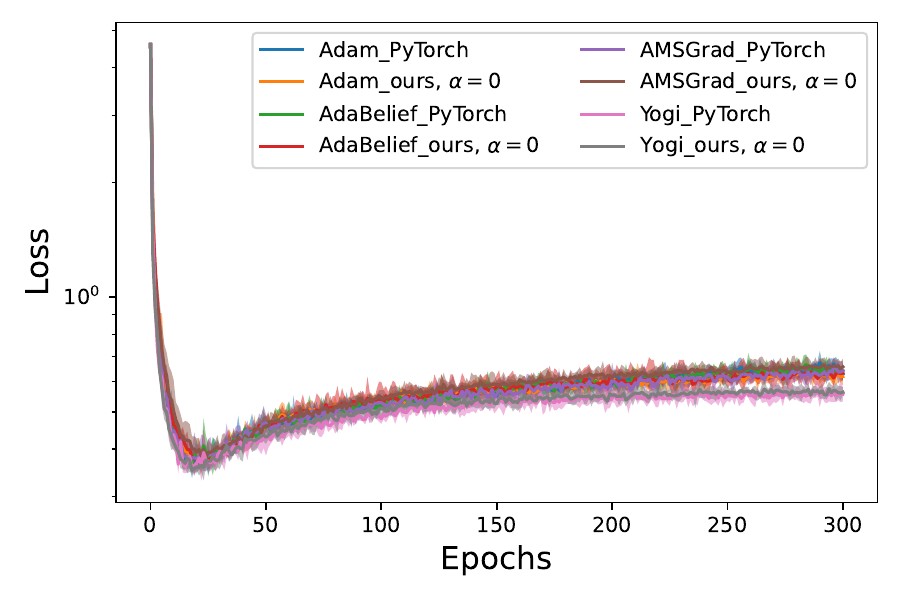}
			\label{Fig:Test4_cifar10_testloss}
			%\caption{Problem 2, SLAM}
		\end{minipage}%
	}%
        \subfigure[Test loss, CIFAR-100]{
		\begin{minipage}[t]{0.48\linewidth}
			\centering
			\includegraphics[width=\linewidth, height=0.20\textheight]{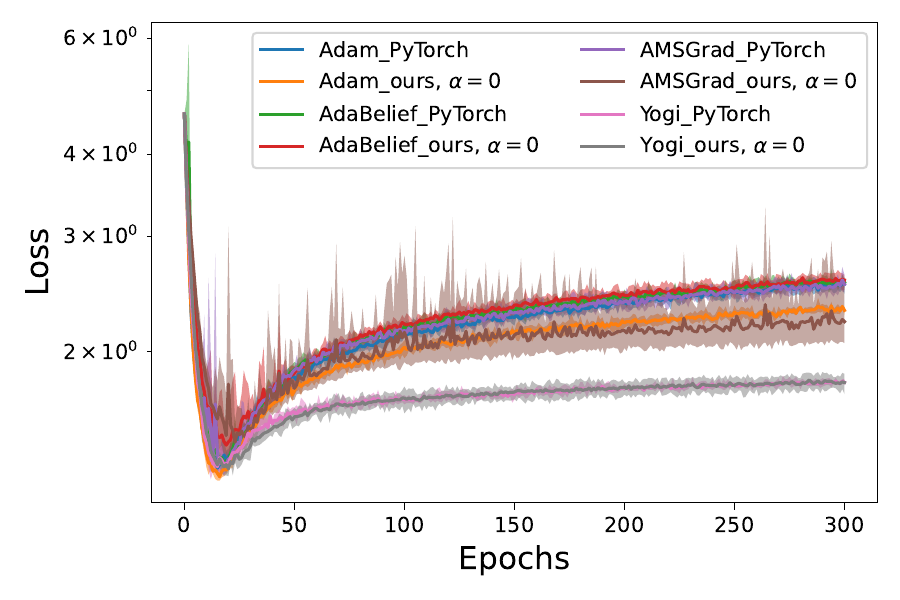}
			\label{Fig:Test4_cifar100_testloss}
			%\caption{Problem 2, SLAM}
		\end{minipage}%
	}%
	\caption{Test results on CIFAR data sets with ResNet50.}
	\label{Fig_Test4_Cifar}
\end{figure}
In this section, we present the supplementary results for the numerical experiments in Section 6.1. In Figure \ref{Fig_Test4_Cifar}, we present the performance of all the compared Adam-family methods, in the aspects of test accuracy, test error, and train error. In particular, the curves of all the compared Adam-family methods are plotted in a single subfigure in Figure \ref{Fig_Test4_Cifar}, for a better illustration on the performances of different Adam-family methods.

\section{Supplementary Numerical Experiments for Section 6.2}
In this section, we present numerical experiments on testing the efficiency of our proposed stochastic subgradient methods with clipping on the natural language processing (NLP) tasks. In these numerical experiments, different from the settings in Section 6.2,  we do not introduce any corruption to the training samples.

% \subsection{Experiments on language-translation model}
We first evaluate the performance of \eqref{Eq_SGDM} and \eqref{Eq_ADAMC} by training language-translation model \citep{vaswani2017attention} on the Multi30k data set \citep{elliott2016Multi30K}. In our numerical experiments, we choose the language translation model as the Seq2Seq network with transformer proposed by \citep{vaswani2017attention}.
Similar to the settings in Section 6.2, we set the batch size to $128$ for all test instances and select the regularization parameter $\varepsilon = 10^{-15}$ for all the Adam-family methods. Moreover, at the $k$-th epoch, we choose the stepsize as $\eta_k = \frac{\eta_0}{\sqrt{k+1}}$ for all tested algorithms. For all compared optimizers, we choose the initial stepsize $\eta_0$ and the momentum parameters $\tau_1, \tau_2$ using the same grid search method as in Section \ref{Section_numerical_adaptive}, and retain all other parameters at their default values for the optimizers in PyTorch. We run each test instance $5$ times with different random seeds. In each test instance, all compared algorithms are tested using the same random seed and initialized with the same random weights by the default initialization function in PyTorch. 

Then we evaluate the efficiency of all the compared methods in training $3$-layer long short-term memory (LSTM) models. In all the numerical experiments, we consistently train our models for $200$ epochs while employing a batch size of 128. These settings adhere to the commonly used experimental setup for training LSTM models, as demonstrated in previous works \citep{zhuang2020adabelief}.

Figure \ref{Fig_Test5_NLP} exhibits the results of our numerical experiments with error bars. Notably, although we run each compared method for $5$ times with different random seeds, the loss curves seem to be very close. As depicted in Figure \ref{Fig_Test5_NLP}, the method outlined in \eqref{Eq_SGDM} slightly outperforms the default SGD method in PyTorch. Moreover, the \ref{Eq_ADAMC} method achieves slightly lower training loss than the build-in Adam method in PyTorch, although the test loss for  \ref{Eq_ADAMC} is slightly worse than that of Adam-PyTorch. These observations illustrate that even when the training samples are finite, the stochastic subgradient methods with gradient clipping exhibit slightly better performance in the training of language models. Combined with the results in Section 6.2, our numerical experiments results illustrate the potential of our proposed stochastic subgradient methods with gradient clipping technique for solving \ref{Prob_Ori}.

\begin{figure}[tb]
	\centering
	\subfigure[Train loss on Multi30K with Seq2Seq network]{
		\begin{minipage}[t]{0.48\linewidth}
			\centering
			\includegraphics[width=\linewidth, height=0.20\textheight]{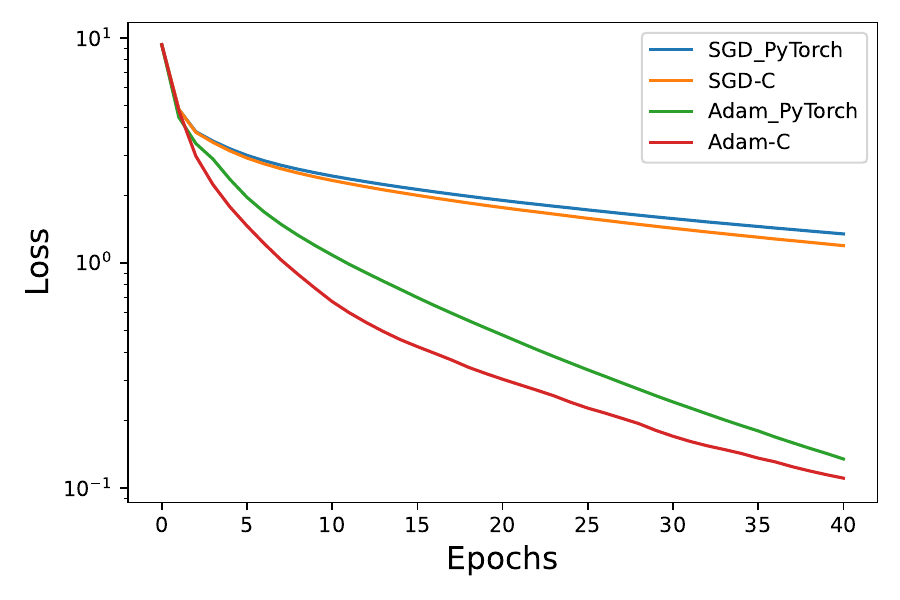}
			\label{Fig:Test5_trainloss}
			%\caption{Problem 2, SLAM}
		\end{minipage}%
	}%
        \subfigure[Test loss on Multi30K with Seq2Seq network]{
		\begin{minipage}[t]{0.48\linewidth}
			\centering
			\includegraphics[width=\linewidth, height=0.20\textheight]{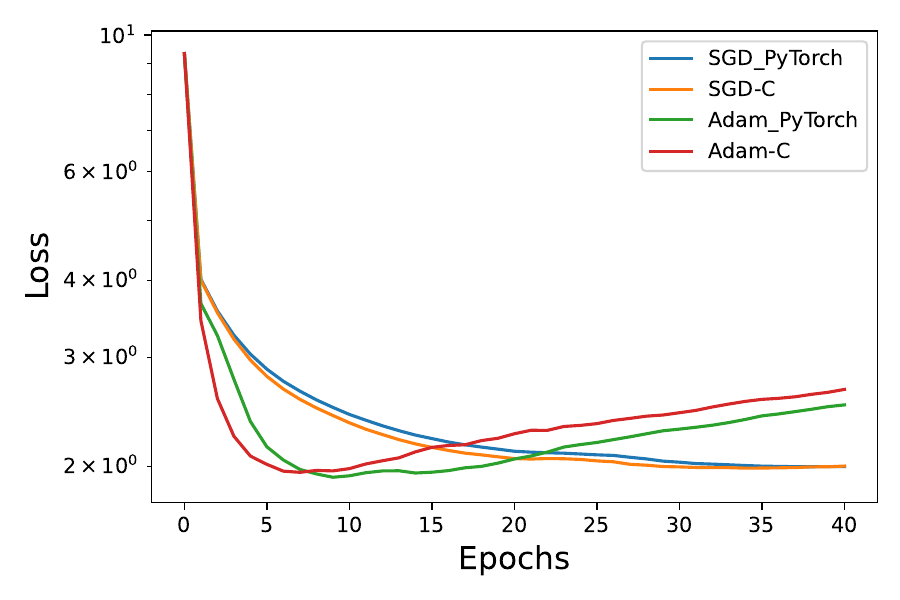}
			\label{Fig:Test5_testloss}
			%\caption{Problem 2, SLAM}
		\end{minipage}%
	}%

        \subfigure[Train perplexity on Penn Treebank with LSTM]{
		\begin{minipage}[t]{0.48\linewidth}
			\centering
			\includegraphics[width=\linewidth, height=0.20\textheight]{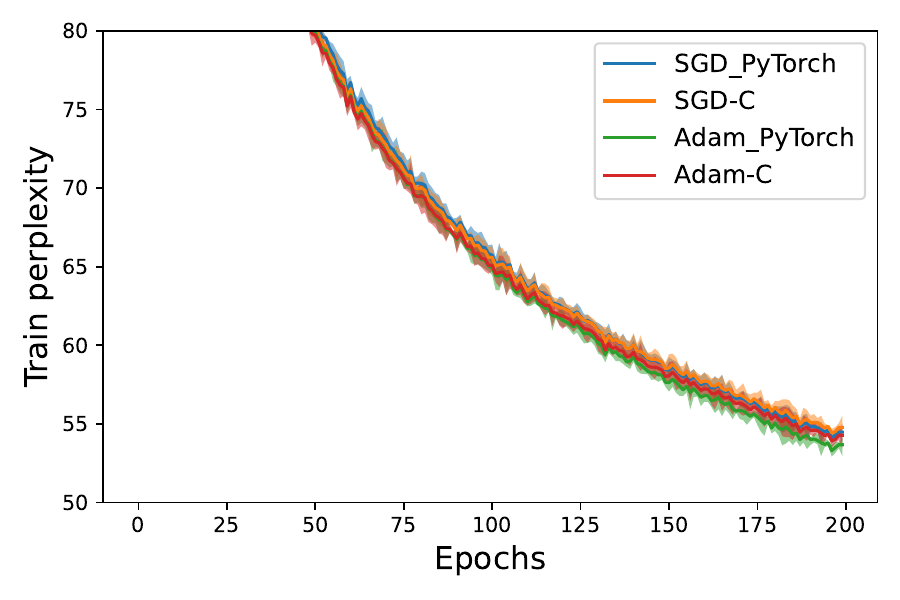}
			\label{Fig:Test5_LSTM_trainloss}
			%\caption{Problem 2, SLAM}
		\end{minipage}%
	}%
        \subfigure[Test perplexity on Penn Treebank with LSTM]{
		\begin{minipage}[t]{0.48\linewidth}
			\centering
			\includegraphics[width=\linewidth, height=0.20\textheight]{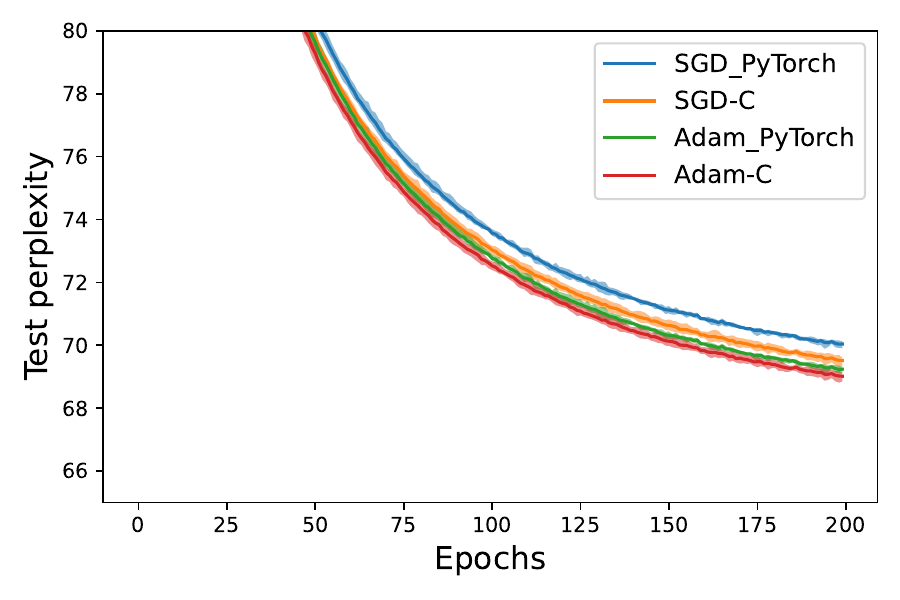}
			\label{Fig:Test5_LSTM_testloss}
			%\caption{Problem 2, SLAM}
		\end{minipage}%
	}%
	\caption{Numerical results on NLP tasks.}
	\label{Fig_Test5_NLP}
\end{figure}

\vskip 0.2in
\bibliography{ref}

\end{document}